\numberwithin{equation}{section}
\newcommand{\commentout}[1]{}
\newcommand{\R}{\mathbb{R}}
\newcommand{\N}{\mathbb{N}}
\newcommand{\1}{{\mathchoice {\rm 1\mskip-4mu l} {\rm 1\mskip-4mu l}
{\rm 1\mskip-4.5mu l} {\rm 1\mskip-5mu l}}}
\newcommand {\lb} {\lambda}
\newcommand {\Chi} {{\bf \raise 2pt \hbox{$\pi$}} }
\DeclareMathOperator*{\argmax}{arg\,max}
\newcommand {\f}   {\frac}
\newcommand {\p}   {\partial}
\newcommand{\dis}{\displaystyle}
\newcommand{\beq}{\begin{equation}}
\newcommand{\beqa} {\begin{array}{rl}}
\newcommand{\eeq}{\end{equation}}
\newcommand{\eeqa}{\end{array}}
\newtheorem{theorem}{Theorem}
\newtheorem{lemma}[theorem]{Lemma}
\newtheorem{remark}[theorem]{Remark}
\newtheorem{proposition}[theorem]{Proposition}
\newtheorem{corollary}[theorem]{Corollary}
\title{\Large \bf Optimization of an Amplification Protocol for Misfolded Proteins by using Relaxed Control}
\author{Jean-Michel Coron\thanks{Sorbonne Universit\'es, UPMC Univ Paris 06, UMR 7598, Laboratoire Jacques-Louis Lions, F-75005, Paris, France . E-mail:
\texttt{coron@ann.jussieu.fr}. JMC was supported by ERC advanced
grant 266907 (CPDENL) of the 7th Research Framework Programme
(FP7).} \and Pierre Gabriel\thanks{Universit\'{e} de Versailles Saint-Quentin-en-Yvelines, Laboratoire de Math\'{e}matiques de Versailles, CNRS UMR 8100, 45 Avenue des \'{E}tats-Unis, 78035 Versailles, France. E-mail:
\texttt{pierre.gabriel@uvsq.fr}. PG was partially supported by
ERC advanced grant 266907 (CPDENL) of the 7th Research Framework
Programme (FP7).} \and Peipei Shang\thanks{Department of Mathematics, Tongji
University, Shanghai 200092, China. E-mail:
\texttt{peipeishang@hotmail.com}. PS was partially supported by
ERC advanced grant 266907 (CPDENL) of the 7th Research Framework
Programme (FP7) and by the National Natural Science Foundation of China (No. 11301387 ).}}
\date{}
\begin{document}
\maketitle
\pagestyle{plain}
\pagenumbering{arabic}

\begin{abstract}
We investigate an optimal control problem which arises in the optimization of an amplification technique for misfolded proteins. The improvement of this technique may play a role in the detection of prion diseases.
The model consists in a linear system of differential equations with a nonlinear control.
The appearance of oscillations in the numerical simulations is understood by using the Perron and Floquet eigenvalue theory for nonnegative irreducible matrices.
Then to overcome the unsolvability of the optimal control, we relax the problem.
In the two dimensional case, we solve explicitly the optimal relaxed control problem when the final time is large enough.
\end{abstract}


\

\noindent{\bf Keywords:} optimal control, relaxed control, turnpike, Pontryagin maximum principle, Perron eigenvalue, Floquet eigenvalue, structured populations.

\

\noindent{\bf 2010 Mathematics Subject Classification.} 49J15, 35Q92, 37N25.
\

\section{Introduction}

Transmissible Spongiform Encephalopathies (TSE) are fatal, infectious, neurodegenerative diseases.
They include bovine spongiform encephalopathies (BSE) in cattle, scrapie in sheep and Creutzfeldt-Jakob disease (CJD) in human.
During the so-called ``mad-cow crisis'' in the 90's, people were infected by a variant of BSE by ingesting contaminated pieces of beef.
More recently, CJD was transmitted between humans via blood or growth hormones.
Because of the long incubation times (some decades), TSE still represent an important public health risk.
There is no \emph{ante mortem} diagnosis currently available to detect infected individuals and prevent possible contaminations.
A promising tool to design a diagnosis test is the protein misfolded cyclic amplification (PMCA) technique~\cite{Castilla,Saa,Saborio}.

The PMCA principle is based on the ``protein-only hypothesis''~\cite{Griffith,Prusiner}.
According to this widely accepted hypothesis, the infectious agent of TSE, known as prions, may consist in misfolded proteins called PrPsc (for Prion Protein scrapie).
The PrPsc replicates in a self-propagating process, by converting the normal form of PrP (called PrPc for Prion Protein cellular) into PrPsc.
The PMCA enabled to consolidate the idea of an autocatalytic replication of PrPsc by nucleated polymerization.
In this model originally proposed by Landsbury~\cite{Lansbury}, PrPsc is considered to be a polymeric form of PrPc.
Polymers can lengthen by addition of PrPc monomers, and they can replicate by splitting into smaller fragments.
The PrPc is mainly expressed by the cells of the central nervous system, so PrPsc concentrates in this zone.
The amount of PrPsc in tissues such as blood is very small and this is why it is very difficult to diagnose an infected individual.

The PMCA mimics {\it in vitro} the nucleation/polymerization mechanism occurring \emph{in vivo} with the aim to quickly amplify the polymers present in minute amount in an infected sample.
It is a cyclic process, where each cycle consists in two phases:
the {\it incubation} phase during which the polymerization is favored due to the presence of a large quantity of PrPc monomers,
and the {\it sonication} phase when the PrPsc polymers are broken down with ultrasounds.
The {\it incubation} phase is expected to increase the size of the polymers,
while the ultrasounds are known to increase the fragmentation of the polymers and so increase their number.
This technique could allow us to detect PrPsc in the samples of blood for instance.
But for now, it is not efficient enough to do so.
Mathematical modelling and optimization tools can help to optimize the PMCA protocol.

\

The mathematical modeling of prion proliferation with ordinary or partial differential equation (PDE) produced a large literature since the first model of Griffith~\cite{Griffith}.
Today, the most widely studied nucleation/polymerization model is the one of Masel~\cite{Masel}.
A PDE version of this model has been introduced by Greer {\it et al.}~\cite{Greer} and studied by many authors including~\cite{CL2,CL1,DGL,GabrielPhD,PG,LW,Pruss,SimonettWalker,Walker}.
Starting from it, we propose to model the PMCA with the following controlled size-structured PDE
\beq\label{eq:prionPDE}\p_tf(t,\xi)+r(u(t))\p_\xi\bigl(\tau(\xi)f(t,\xi)\bigr)=u(t)\left(\int_\xi^\infty\beta(\zeta)\kappa(\xi,\zeta)f(t,\zeta)\,d\zeta-\beta(\xi)f(t,\xi)\right)\eeq
with the boundary condition $f(t,0)=0$ for every time $t\geq0$.
The unknown $f(t,\xi)$ is the number, or density of polymers of size $\xi>0$ at time $t$.
The size of the polymers increases by polymerization with respect to the individual growth rate $\tau(\xi)$.
The terms in the large brackets on the right-hand side of \eqref{eq:prionPDE} form the fragmentation operator, with $\beta(\xi)$ the global fragmentation rate and $\kappa(\xi,\zeta)$ the fragmentation kernel.
The conservation of the quantity of polymerized proteins during the fragmentation process requires that this kernel satisfies the following standard condition (see~\cite{DG} for instance)
\begin{gather}
\label{massconservation}
\int_0^\zeta \xi\,\kappa(\xi,\zeta)\,d\xi=\zeta.
\end{gather}
The fragmentation is modulated by a multiplicative factor $u(t)\in[u_{\min},u_{\max}]$ which represents the {\it sonication} intensity.
The control $u(t)\equiv u_{\min}=1$ corresponds to the absence of {\it sonication}, while $u(t)\equiv u_{\max}>1$ represents the maximal power of the {\it sonicator}.
We assume that the {\it sonication} does not only increase the fragmentation but also influence the polymerization process.
This is taken into account by the positive term $r(u(t))$ where the function $r$ should be decreasing if we consider that the ultrasounds have a negative effect on the growth of the polymers.
The optimal control problem we are interested in is, starting with a given initial size distribution $f(t=0,\xi)=f^0(\xi)\geq0$,
to maximize the objective
\begin{equation}\label{costJ}
J(u)=\int_0^\infty \xi f(T,\xi)\,d\xi,
\end{equation}
which represents the total quantity of polymerized proteins at a given final time $T$.

\

For the mathematical study in this paper, we consider a $n$-compartment approximation of \eqref{eq:prionPDE}
\[\dfrac{dx_i}{dt} + r(u(t))(\tau_i x_i - \tau_{i-1} x_{i-1}) = u(t)\left(\sum_{j=i+1}^n \beta_j\, \kappa_{i,j}\,x_j -\beta_i x_i\right) \ ,
\quad \mbox{for} \ 1\leq i\leq n,\]
with $\beta_1=0$ and $\tau_n=0$. This is a finite dimensional ordinary differential system, linear in $x=(x_1,\dots,x_n)^T$, which can be written under a matrix form
\beq\label{eq:matrixformu}
\left \{ \begin{array}{l}
\dot{x} = uFx+r(u)Gx, \qquad t\in[0,T],
\vspace{2mm}\\
x(t=0) = x^{0}>0,
\end{array} \right.\eeq
where $G$ is the growth matrix
\begin{equation}
\label{defG}
G = \left(\begin{array}{ccccc}
-\tau_{\text{\scriptsize $1$}}& & & & \\
\tau_{\text{\scriptsize $1$}} &-\tau_{\text{\scriptsize $2$}}& & \text{\LARGE$0$} & \\
 &\ddots&\ddots& & \\
 &  & & & \\
   & \text{\LARGE$0$} &\tau_{\text{\scriptsize $n-2$}}&-\tau_{\text{\scriptsize $n-1$}}& \\
 & & &\tau_{\text{\scriptsize $n-1$}}&0
\end{array}\right) ,
\end{equation}
and $F$ is the fragmentation matrix
\begin{gather}
\label{defF}
F = \left(\begin{array}{cccc}
0& & & \\
 &-\beta_2& &\text{{\Large$(\kappa_{ij}\beta_j)_{i<j}$}}\\
 & &\ddots& \\
 &\text{\LARGE$0$}& & \\
 & & &-\beta_n
\end{array}\right) .
\end{gather}
In \eqref{eq:matrixformu} and in the following, if $x=(x_1,\dots,x_n)^T$, by $x>0$ (and we also write $x$ is positive) we mean that $x_i>0$ for every $i\in \{1,\cdots,n\}.$ We use the same notation for row vectors.
\

We assume that
\begin{equation}
\label{constpositive}
\tau_i>0\quad\text{and}\quad\beta_{i+1}>0, \qquad \forall i\in[1,n-1].
\end{equation}
The mass conservation assumption \eqref{massconservation} on $\kappa$ becomes
\beq\label{as:kappadiscret}
\sum_{i=1}^{j-1}i\,\kappa_{ij}=j,\qquad j=2,\cdots, n.\eeq
The quantity \eqref{costJ} we want to maximize writes
\begin{equation}\label{new-cost}
J(u)=\sum_{i=1}^n i\,x_i(T).
\end{equation}

\

Such $n$-compartment optimal control problems have been widely studied in cancer chemotherapy and the conclusion is usually that the optimal control is bang-bang since singular controls are not optimal~\cite{LedSch02b,LedSch02,LedSch06,LedSch06b,SwiLedSch}.
 In contrast with these results, we show that, for our problem, the optimal control is essentially singular.

\

The organization of the paper is the following.
In Section~\ref{sec:eigenvalue}, we investigate eigenvalue optimization problems related to the optimal control problem \eqref{new-cost}.
More precisely we maximize the Perron eigenvalue with respect to constant control parameters and compare the maximum to Floquet eigenvalues, which are the analogue of Perron eigenvalues for periodic coefficients.
We remark that fast oscillating controls can provide a greater Floquet eigenvalue than the optimal Perron eigenvalue.
This observation indicates that a classical optimal control may not exist and motivates the relaxation of the problem.
The relaxed control problem, for which the set of controls is extended to its convex hull, is investigated in Section \ref{sec:relax}.
We prove that the trajectories corresponding to the constant control which maximizes the Perron eigenvalue in the new convex set satisfy the Pontryagin Maximum Principle (see Proposition~\ref{prop:exsolPerron}).
In Section \ref{secn=2}, we state and prove the main result of the paper (Theorem~\ref{th:uniqueness}) which treats the case of the two-compartment model:
for $n=2$ the optimal relaxed control is unique and can be computed explicitly.
Except for initial and terminal times, {\it i.e.}, $t$ close to $0$ or close to $T$, this optimal control is equal to the constant which maximizes the Perron eigenvalue.
Finally, in the Appendix, we give the details of the proofs for the results of Section~\ref{sec:eigenvalue}.

\

\section{Eigenvalue problems}\label{sec:eigenvalue}

For a fixed parameter $u>0$ and for $r(u)>0,$ the matrix $uF+r(u)G$ is irreducible (see, for instance, \cite[Section 2.8]{2002-Serre-book} for a definition of irreducible) and has nonnegative extra-diagonal entries. So the Perron-Frobenius theorem (see, for instance,
\cite[Section 5.3]{2002-Serre-book}) applies and ensures the existence of a simple dominant eigenvalue $\lb_P$.
In our case, this eigenvalue is positive and it provides the exponential growth rate of the solutions to the equation $\dot x=(uF+r(u)G)x$ (see, for instance, \cite[Section 6.3.1]{BP}).
A first question is to investigate the dependence of the first eigenvalue on the parameter $u$.
Maximizing the Perron eigenvalue is related to our optimal control problem \eqref{new-cost}.
It can be regarded as the limit when $T\to+\infty$ of our optimization problem when we restrict to constant controls.
A remarkable fact is that for some coefficients, the dependence $u\mapsto\lambda_P(u)$ can be non monotonic and there may exist an optimal value $u_\mathrm{opt}$ for
which $\lb_P$ admits a global maximum on $\R^+$.
Theorem~\ref{th:Perron}, which is proved in Appendix \ref{sec-Perron-eigenvalue},
gives sufficient conditions for the existence of such a global optimum.

\begin{theorem}\label{th:Perron}
Assume that $r:\R^+\to\R^{+*}$ is continuous and  admits an expansion of the form
\beq\label{as:r}\exists\, l>0,\, r_l\geq0,\qquad r(u)=r_0+r_lu^{-l}+\underset{u\to +\infty}{o}\left(u^{-l}\right).\eeq
Consider also that $(\tau_i)_{1\leq i\leq n}$ satisfies the condition
\beq\label{as:tau} \exists\, k\in\N^*\quad \text{such that}\quad\forall\, i\leq k,\ \tau_i=i\,\tau_1\quad\text{and}\quad \tau_{k+1}>(k+1)\tau_1.\eeq
Then there exists an optimal value $u_\mathrm{opt}>0$ which satisfies
$$\forall\,u\geq0,\qquad \lambda_P(u)\leq\lambda_P(u_\mathrm{opt}).$$
\end{theorem}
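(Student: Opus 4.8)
The plan is to study the asymptotic behaviour of $\lambda_P(u)$ as $u\to+\infty$ and compare it with the value at a well-chosen finite parameter; the hypotheses \eqref{as:r} and \eqref{as:tau} are tailored so that the large-$u$ limit is \emph{not} the supremum. First I would record the basic qualitative properties: by Perron--Frobenius, $\lambda_P(u)$ is a simple eigenvalue depending continuously (indeed analytically) on $u>0$, with positive left and right eigenvectors; this already gives continuity on $\R^{+*}$, and one checks separately that $\lambda_P$ extends continuously to $u=0$ (where $uF+r(u)G\to r(0)G$, a matrix whose dominant eigenvalue is $0$ since $G$ is the generator of a mass-conserving-type process — more precisely $\mathbf{1}^TG$ has a simple structure). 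So $\lambda_P$ is continuous on $[0,+\infty)$.

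Next, the heart of the argument is the expansion of $\lambda_P(u)$ for large $u$. Write $uF+r(u)G = u\bigl(F + \tfrac{r(u)}{u}G\bigr)$; since $r(u)/u = r_0/u + r_lu^{-1-l}+o(u^{-1-l})\to 0$, the matrix inside is a small perturbation of $F$. The Perron eigenvalue of $F$ itself is $0$: indeed $F$ is block upper triangular with the scalar block $0$ in position $(1,1)$ and a strictly negative-diagonal block, and crucially the mass-conservation relation \eqref{as:kappadiscret} says $\mathbf{1}^TF$ has a special form making $0$ the dominant eigenvalue with left eigenvector related to $(1,2,\dots,n)$ — this is exactly why the objective \eqref{new-cost} has the weights $i$. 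Hence $\lambda_P(uF+r(u)G) = u\cdot\lambda_P\bigl(F+\tfrac{r(u)}{u}G\bigr)$ and I expand the inner Perron eigenvalue by regular perturbation theory around the eigenvalue $0$ of $F$: to first order the correction is $\tfrac{r(u)}{u}\,\langle \ell, G v\rangle/\langle \ell, v\rangle$ where $\ell,v$ are the left/right Perron eigenvectors of $F$. Multiplying back by $u$, the leading term becomes $r_0\cdot\langle\ell,Gv\rangle/\langle\ell,v\rangle$ plus terms that $\to 0$. Using $\ell \propto (1,2,\dots,n)$ and the structure \eqref{defG} of $G$, one computes $\ell^T G = \tau_1(1,\dots)-\dots$; condition \eqref{as:tau} (that $\tau_i=i\tau_1$ for $i\le k$ but $\tau_{k+1}>(k+1)\tau_1$) is precisely what forces this combination, and hence $\lim_{u\to\infty}\lambda_P(u)$, to equal $0$ rather than be positive. (Without \eqref{as:tau} the limit could be positive and no finite maximizer need exist.)

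Then I would produce \emph{one} parameter $u_0>0$ with $\lambda_P(u_0)>0$: taking $u_0$ with $r(u_0)$ large enough, the matrix $u_0F+r(u_0)G$ has strictly positive spectral bound because the $G$-part contributes net growth in the low-compartment block while $F$ only redistributes mass; alternatively, since $\lambda_P$ is continuous and not identically $0$ — e.g. its derivative at $u=0$ is computable and positive under \eqref{as:tau} — there is a point where it is strictly positive. Combining the three facts — $\lambda_P(0)=0$, $\lambda_P(u)\to 0$ as $u\to+\infty$, and $\lambda_P>0$ somewhere — continuity on $[0,+\infty)$ forces $\lambda_P$ to attain a global maximum at some $u_{\mathrm{opt}}\in(0,+\infty)$, which is the claim.

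The main obstacle I anticipate is the bookkeeping in the asymptotic expansion: one must verify that the $o(u^{-l})$ remainder in \eqref{as:r} really does produce an $o(1)$ contribution to $\lambda_P(u)$ after multiplication by $u$ (so the perturbation expansion must be carried to the order where $u\cdot u^{-1-l}=u^{-l}\to0$ is visible, and the error terms controlled uniformly), and that the left Perron eigenvector of $F$ is exactly proportional to $(1,2,\dots,n)$ — this last point relies on \eqref{as:kappadiscret} and is what ties the eigenvalue computation to the specific cost functional; checking it for a non-diagonalizable or defective $F$ would need a little care, but simplicity of the Perron eigenvalue of the irreducible-on-its-support structure saves us. The rest is continuity and a compactness-free ``goes to zero at both ends, positive in the middle'' argument.
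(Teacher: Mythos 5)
There is a genuine gap: your argument hinges on the claim that $\lambda_P(u)\to 0$ as $u\to+\infty$, and that claim is false. Carrying out the first-order perturbation you yourself set up, the left and right eigenvectors of $F$ for its top eigenvalue $0$ are $\ell=\psi=(1,2,\dots,n)$ (by \eqref{psiF}) and $v=(1,0,\dots,0)^T$, and from \eqref{defG} one computes $\psi G=(\tau_1,\dots,\tau_{n-1},0)$, so that $\langle\ell,Gv\rangle/\langle\ell,v\rangle=\tau_1>0$. Hence $\lim_{u\to+\infty}\lambda_P(u)=r_0\tau_1>0$, not $0$; condition \eqref{as:tau} plays no role in this leading term and certainly does not make it vanish. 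With a strictly positive limit at $+\infty$, the ``zero at both ends, positive in the middle'' argument collapses: continuity alone no longer rules out that the supremum is approached only as $u\to+\infty$ and never attained.

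The actual role of \eqref{as:tau} (together with $r_l\geq0$ in \eqref{as:r}) appears one order further in the expansion. The paper proves (Theorem~\ref{th:expansion}) that $\lambda_P(u)=r_0\tau_1+Cu^{-m}+o(u^{-m})$ with $m=\min(k,l)$ and $C>0$, the positivity of $C$ coming precisely from $\tau_{k+1}>(k+1)\tau_1$ (the contributions of $\tau_i-i\tau_1$ for $i\leq k$ cancel). Thus $\lambda_P(u)>r_0\tau_1=\lim_{v\to+\infty}\lambda_P(v)$ for all large finite $u$, i.e., the limit is approached from above; combined with $\lambda_P(u)\to0$ as $u\to0$, $\lambda_P>0$ on $(0,+\infty)$ and continuity, this forces the supremum to exceed the value at infinity and to be attained at some finite $u_{\mathrm{opt}}>0$. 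To repair your proof you must push the perturbation expansion to order $u^{-m}$ --- which requires the asymptotics of the eigenvector components, $x_i(u)\sim r_0^{i-1}\prod_{j=1}^{i-1}(\tau_j/\beta_{j+1})\,u^{1-i}$, and not merely the limit of $X_P(u)$ --- and then verify the sign of the correction term; the first-order computation alone cannot conclude.
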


The interpretation is that in this case, there is a compromise between too much sonication which forms many small polymers but may have a small growth rate, and too high sonication which forms large polymers but in small quantity.

\

The theory of Perron-Frobenius can be extended to periodic controls: this is the Floquet theory. It ensures that for time periodic matrices which are monotonic and irreducible for any time, there exists a dominant eigenvalue.
It allows one to define for a periodic control $u(t)$ a Floquet eigenvalue $\lb_F[u]>0$ which prescribes, as in the case of the Perron eigenvalue, the asymptotic exponential growth rate of the solutions to the equation $\dot x=(u(t)F+r(u(t))G)x$
(see~\cite[Section 6.3.2]{BP} for instance).
A natural question is then to compare these periodic eigenvalues to the best constant one $\lb_P(u_\mathrm{opt})$.
Theorem~\ref{th:Floquet}, which is proved in Appendix \ref{secFloquet},  ensures that if $r$ satisfies the condition
\beq\label{as:r2}\f{r''(u_\mathrm{opt})}{r(u_\mathrm{opt})-u_\mathrm{opt}\,r'(u_\mathrm{opt})}>0,\eeq
then the value $u_\mathrm{opt}$ is a saddle point in the set of periodic controls.
This means that there exist periodic controls which provide a larger growth rate than $\lb_P(u_\mathrm{opt})$.

\begin{theorem}\label{th:Floquet}
Assume that there exists an optimal value $u_\mathrm{opt}$ for the Perron eigenvalue and that $u_\mathrm{opt} F+r(u_\mathrm{opt})G$ is diagonalizable.
Define for a frequency $\omega>0$ and a perturbation $\varepsilon>0$ the Floquet eigenvalue $\lb_F(\varepsilon,\omega):=\lb_F[u_\mathrm{opt}+\varepsilon\cos(\omega t)]$.
Then we have
\[\lim_{\omega\to+\infty}\f{\p^2}{\p\varepsilon^2}\lb_F(0,\omega)=\f12\,\f{r''(u_\mathrm{opt})}{r(u_\mathrm{opt})-u_\mathrm{opt}\,r'(u_\mathrm{opt})}\,\lb_P(u_\mathrm{opt}).\]
\end{theorem}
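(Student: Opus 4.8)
The plan is to carry out a second-order perturbation expansion in $\varepsilon$ of the Floquet eigenpair and then let $\omega\to+\infty$. Write $A(u)=uF+r(u)G$, so $A'(u)=F+r'(u)G$ and $A''(u)=r''(u)G$, and set $A_0=A(u_\mathrm{opt})$, $\lb_0=\lb_P(u_\mathrm{opt})$, with $N_0>0$ and $\phi_0>0$ the right and left Perron eigenvectors of $A_0$ normalized by $\phi_0N_0=1$. For $u(t)=u_\mathrm{opt}+\varepsilon\cos(\omega t)$ and $T_\omega=2\pi/\omega$, the Floquet--Perron theory recalled above provides, for $\varepsilon$ small, a positive $T_\omega$-periodic right eigenvector $N_\varepsilon(\cdot)$ and left eigenvector $\phi_\varepsilon(\cdot)$ with
\[\dot N_\varepsilon=\bigl(A(u(t))-\lb_F\bigr)N_\varepsilon,\qquad -\dot\phi_\varepsilon=\phi_\varepsilon\bigl(A(u(t))-\lb_F\bigr),\qquad \phi_\varepsilon(t)N_\varepsilon(t)\equiv1.\]
Because the dominant Floquet multiplier is simple (a standard consequence of Floquet--Perron theory), analytic perturbation theory gives smooth dependence on $\varepsilon$, and we may write $\lb_F(\varepsilon,\omega)=\lb_0+\varepsilon\lb_1+\varepsilon^2\lb_2+o(\varepsilon^2)$ and $N_\varepsilon=N_0+\varepsilon N_1+\varepsilon^2N_2+o(\varepsilon^2)$ with $N_1,N_2$ periodic. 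Since $\frac{\p^2}{\p\varepsilon^2}\lb_F(0,\omega)=2\lb_2(\omega)$, the statement is equivalent to $\lim_{\omega\to\infty}\lb_2(\omega)=\frac14\,\frac{r''(u_\mathrm{opt})}{r(u_\mathrm{opt})-u_\mathrm{opt}r'(u_\mathrm{opt})}\,\lb_0$.

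\textbf{The first corrector.} Collecting order-$\varepsilon$ terms gives $\dot N_1=(A_0-\lb_0)N_1+\cos(\omega t)A'(u_\mathrm{opt})N_0-\lb_1N_0$. Left-multiplying by $\phi_0$, integrating over a period, and using periodicity, $\phi_0(A_0-\lb_0)=0$ and $\int_0^{T_\omega}\cos(\omega t)\,dt=0$ force $\lb_1=0$ (consistent with the optimality condition $\lb_P'(u_\mathrm{opt})=0$). The equation $\dot N_1=(A_0-\lb_0)N_1+\cos(\omega t)A'(u_\mathrm{opt})N_0$ then has, up to an irrelevant additive multiple of $N_0$, the unique periodic solution $N_1(t)=e^{i\omega t}a+e^{-i\omega t}\bar a$ with $a=\frac12\bigl(i\omega I-(A_0-\lb_0)\bigr)^{-1}A'(u_\mathrm{opt})N_0$; this makes sense because $A_0-\lb_0$ is diagonalizable with spectrum in $\{\mathrm{Re}\le0\}$ and $0$ a simple eigenvalue, so $i\omega$ lies outside its spectrum for $\omega\ne0$, and moreover $\|a\|=O(1/\omega)$ as $\omega\to\infty$.

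\textbf{Second order and the limit.} Order-$\varepsilon^2$ terms give $\dot N_2=(A_0-\lb_0)N_2+\cos(\omega t)A'(u_\mathrm{opt})N_1+\tfrac12\cos^2(\omega t)A''(u_\mathrm{opt})N_0-\lb_2N_0$. Left-multiplying by $\phi_0$ and integrating over a period kills the $N_2$ contributions and yields the solvability condition
\[\lb_2=\f1{T_\omega}\int_0^{T_\omega}\cos(\omega t)\,\phi_0A'(u_\mathrm{opt})N_1(t)\,dt+\Bigl(\f1{T_\omega}\int_0^{T_\omega}\cos^2(\omega t)\,dt\Bigr)\tfrac12\phi_0A''(u_\mathrm{opt})N_0.\]
The second average equals $1/2$, so the last term is $\tfrac14 r''(u_\mathrm{opt})\phi_0GN_0$; in the first term, inserting $N_1$ and using $\int_0^{T_\omega}\cos(\omega t)e^{\pm i\omega t}dt=T_\omega/2$ gives $\mathrm{Re}\bigl(\phi_0A'(u_\mathrm{opt})a\bigr)=O(1/\omega)$, which vanishes as $\omega\to\infty$. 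Hence $\lim_{\omega\to\infty}\lb_2=\tfrac14 r''(u_\mathrm{opt})\phi_0GN_0$. Finally, $A_0N_0=\lb_0N_0$ gives $u_\mathrm{opt}\phi_0FN_0+r(u_\mathrm{opt})\phi_0GN_0=\lb_0$, while the standard first-order formula for the simple eigenvalue $\lb_P$, namely $\lb_P'(u_\mathrm{opt})=\phi_0A'(u_\mathrm{opt})N_0=\phi_0FN_0+r'(u_\mathrm{opt})\phi_0GN_0=0$, gives $\phi_0FN_0=-r'(u_\mathrm{opt})\phi_0GN_0$; combining, $\bigl(r(u_\mathrm{opt})-u_\mathrm{opt}r'(u_\mathrm{opt})\bigr)\phi_0GN_0=\lb_0$, and substituting yields the claimed limit after multiplying by $2$.

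\textbf{Main obstacle.} The algebra is routine once the expansion is set up; the genuine points to secure are the smooth (analytic) dependence of the Floquet eigenpair on $\varepsilon$ for fixed $\omega$ and the legitimacy of the order-by-order identification, which rest on the simplicity of the dominant Floquet multiplier of the perturbed periodic system, together with the fact that the first corrector $N_1$ is non-resonant with the forcing frequency $\omega$. It is precisely the simple dominant eigenvalue of $A_0$ together with the diagonalizability hypothesis that make the resolvent $\bigl(i\omega I-(A_0-\lb_0)\bigr)^{-1}$ decay like $1/\omega$; this is the mechanism by which the resonant contribution to $\lb_2$ disappears in the high-frequency limit, leaving only the averaged $\cos^2$ term, which produces the stated constant.
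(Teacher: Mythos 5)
Your argument is correct and follows essentially the same route as the paper: a second-order perturbation expansion of the Floquet eigenpair in $\varepsilon$, with the first-order term vanishing by optimality and the second-order solvability condition splitting into the $\langle\cos^2\rangle$ term (which yields the stated constant via $\phi_0 G N_0=\lb_0/(r(u_\mathrm{opt})-u_\mathrm{opt}r'(u_\mathrm{opt}))$) and a cross term that vanishes as $\omega\to+\infty$. The only cosmetic difference is that you express the first corrector through the resolvent $\bigl(i\omega I-(A_0-\lb_0)\bigr)^{-1}$ and use its $O(1/\omega)$ decay, whereas the paper decomposes it along the eigenbasis of $A_0$ and obtains the explicit coefficients $(\lb_1-\lb_i)/(\omega^2+(\lb_1-\lb_i)^2)$, which tend to zero for the same reason.
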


\

The computation of second order approximation of the Floquet eigenvalue is used to detect ``resonance'' in population models with periodic coefficients (see~\cite{Bacaer2} and the references therein).
For these models, resonance is said to occur if periodic variations in the coefficients increase the growth rate.

\medskip

The link between the eigenvalue problem and the optimal control problem \eqref{new-cost} is investigated in~\cite{CalvezGabriel} when the function $r$ is a constant.
In this case, there exists an optimal control $u^*(t)$ which is essentially equal to the best constant $u_\mathrm{opt}$ of the Perron optimization problem
(see Chapter~5 in~\cite{GabrielPhD} for numerical simulations).
Under condition~\eqref{as:r2} and the assumptions of Theorem~\ref{th:Floquet}, such a behavior is not expected since we can find oscillating controls which provide a better eigenvalue than $u_\mathrm{opt}$.
The aim of this paper is to investigate the optimal control problem \eqref{new-cost} in the case where there exists an optimal constant $u_\mathrm{opt}\in(u_{\min},u_{\max})$ and the function $r$ satisfies Assumption~\eqref{as:r2}.
The first question is the existence of an optimal control since the numerical simulations in Figure~\ref{fig:r_convex} show oscillations.
These questions are investigated in the following section by using the relaxed control theory.

\

\begin{figure}[ht!]
\begin{center}
\begin{minipage}[b]{0.49\linewidth}
\includegraphics [width=\linewidth]{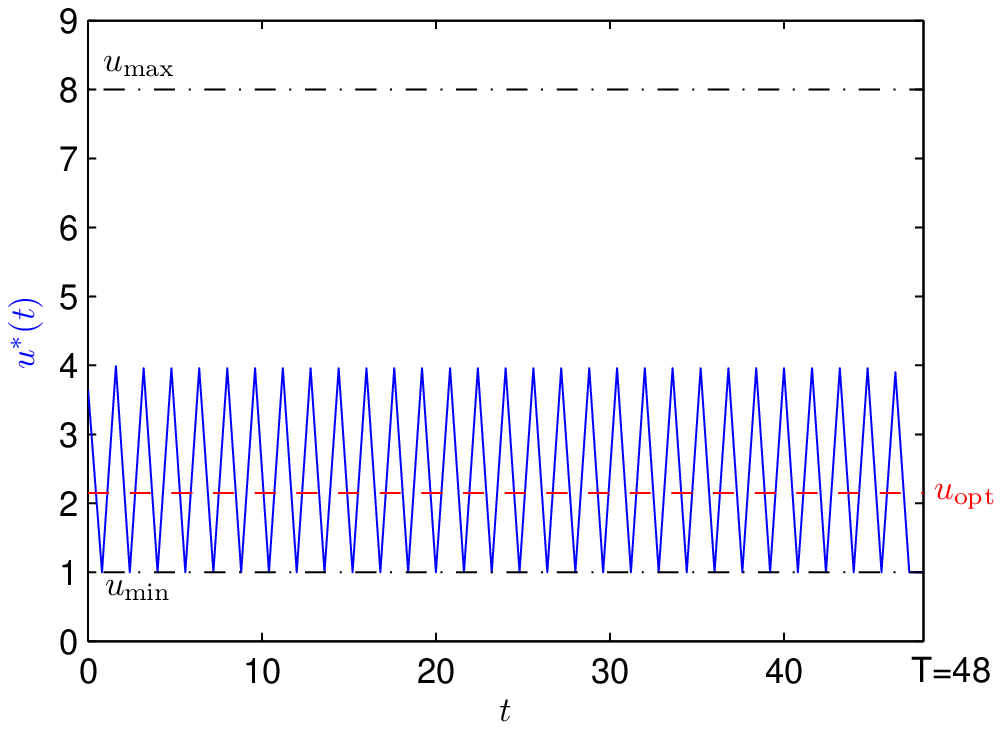}
\end{minipage}\hfill
\begin{minipage}[b]{0.49\linewidth}
\includegraphics [width=\linewidth]{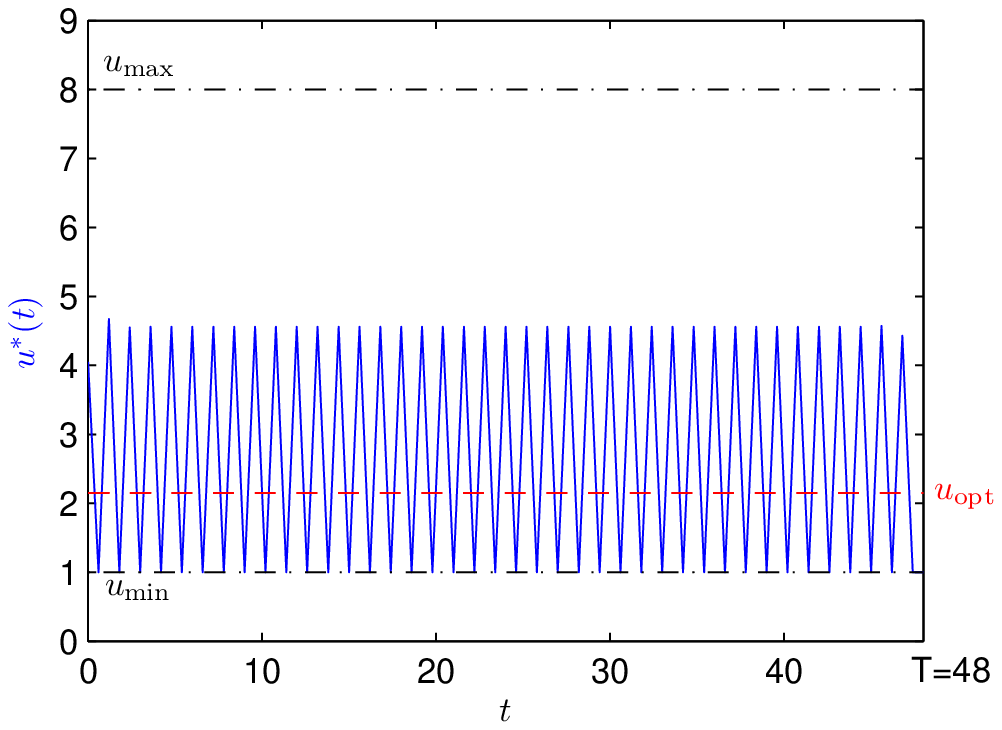}
\end{minipage}
\end{center}

\begin{center}
\begin{minipage}[b]{0.49\linewidth}
\includegraphics [width=\linewidth]{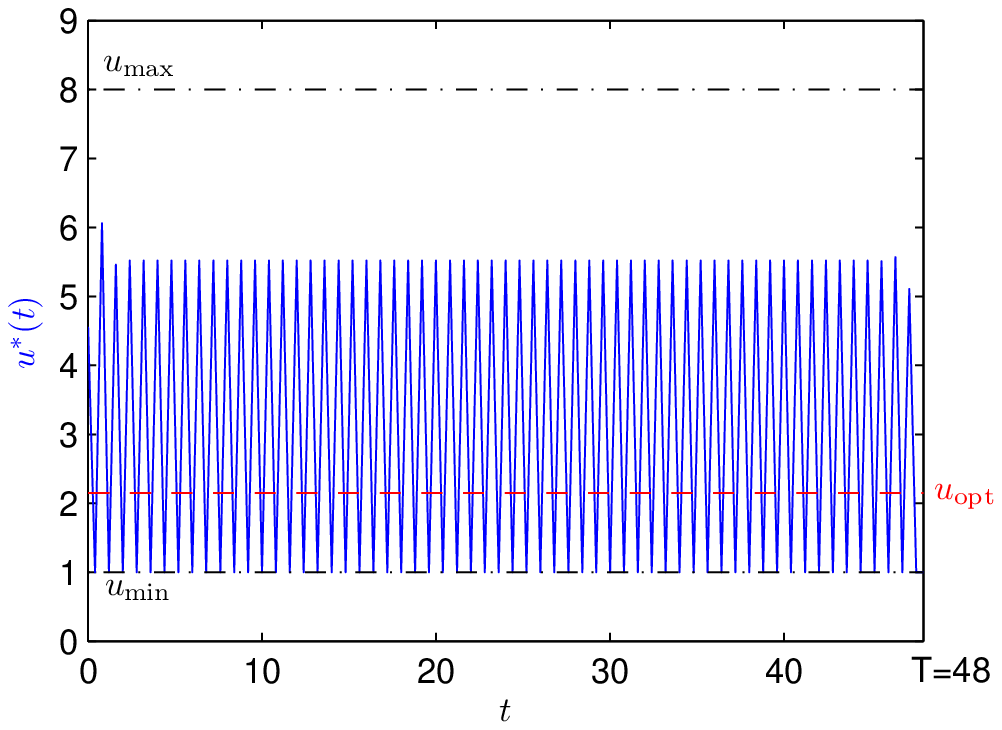}
\end{minipage}\hfill
\begin{minipage}[b]{0.49\linewidth}
\includegraphics [width=\linewidth]{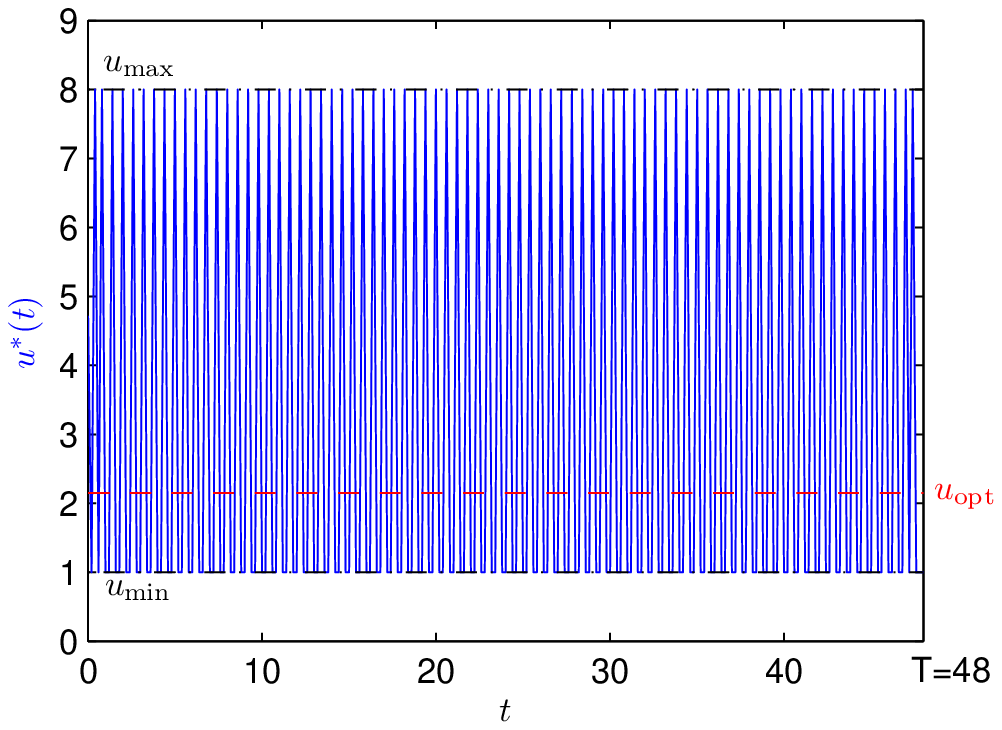}
\end{minipage}
\end{center}
\caption{\label{fig:r_convex}Piecewise optimal controls $u^*(t)$ obtained for different values of $\Delta t$ with the decreasing convex function $r(u)=\f{2}{1+u}.$
The control varies between $u_{\min}=1$ and $u_{\max}=8$ and the final time is $T=48.$
The dimension of the model is $n=3$ and the coefficients are $\tau_1=1,$ $\tau_2=10,$ $\beta_2=0.5,$ $\beta_3=1,$ $\kappa_{1,2}=2$ and $\kappa_{1,3}=\kappa_{2,3}=1.$
The time step varies as follows: $\Delta t=0.8$ (top left), $\Delta t=0.6$ (top right), $\Delta t=0.4$ (bottom left) and $\Delta t=0.2$ (bottom right).}
\end{figure}

\

\section{Relaxed Control and the Pontryagin maximum principle}\label{sec:relax}

Let $u_{\min}$ and $u_{\max}$ be two positive real numbers such that $u_{\min}<u_{\max}$.
We consider in this section a function $r\in\mathcal C^2([u_{\min},u_{\max}])$ satisfying
\begin{gather}
\label{rpositive}
r(u)>0,\qquad \forall u \in [u_{\min},u_{\max}],
\\
\label{rsecondpositive}
r''(u)>0, \qquad \forall u\in (u_{\min},u_{\max}),
\\
\label{as:r3}
r(u)-u r'(u)>0, \qquad \forall u\in (u_{\min},u_{\max}),
\end{gather}
and we assume that there exists a strict optimum $u_\mathrm{opt}\in(u_{\min},u_{\max})$ for the Perron eigenvalue.
Under these assumptions, condition~\eqref{as:r2} is automatically fulfilled.
Remark that condition~\eqref{as:r3} is satisfied when $r$ is decreasing and satisfies \eqref{rpositive}, which are relevant conditions from the viewpoint of biology.
To study this case, it will be convenient to use the equivalent alternative statement of \eqref{eq:matrixformu} where $x$ is solution to
\beq\label{eq:matrixformuv}
\left \{ \begin{array}{l}
\dot{x}(t) = u(t)F x(t)+v(t)G x(t), \qquad t\in[0,T],
\vspace{2mm}\\
x(t=0) = x^{0}>0,
\end{array} \right.\eeq
with the two dimensional control $(u,v)$ which belongs to the graph of the function $r$, \textit{i.e.,}
\[\forall\,t\in[0,T],\qquad(u(t),v(t))\in\Omega:=Graph(r)=\left\{(u,r(u)),\ u_{\min}\leq u\leq u_{\max}\right\}.\]
Let
\begin{gather}
\label{defpsi}
\psi:=(1,2,\cdots, n) \in \R^n
\end{gather}
be the \emph{mass vector}. Note that, from \eqref{defF}, the mass conservation assumption~\eqref{as:kappadiscret} and \eqref{defpsi}, one has
\begin{gather}
\label{psiF}
\psi F=0.
\end{gather}

The optimal control problem \eqref{new-cost} now becomes
\begin{gather}
\label{eq:cost1}
\text{maximize } J(u,v)=\psi x(T),\, (u,v):[0,T]\rightarrow \Omega  \text{ is a Lebesgue measurable function},
\end{gather}
subject to dynamics~\eqref{eq:matrixformuv}.
Since the function $r$ is strictly convex, the graph $\Omega$ is not a convex subset of $\R^2$ and, since the kernel of $G$ is not reduced to $\{0\}$, for any $x>0$, the velocity set
\[{\mathcal V}(x)=\{(u F+vG)x,\ (u,v)\in\Omega\}\]
is also not convex. For this kind of situation, the existence of an optimal control for \eqref{eq:cost1} cannot be ensured
and it is standard to relax the problem by replacing the control set $\Omega$ by its convex hull $Conv(\Omega)$ (see, for instance, \cite[Section 4.2]{LeeMarkus}).
One replaces problem \eqref{eq:cost1} by the following optimal control problem
\beq
\label{eq:cost2}
\text{maximize } J(u,v)=\psi x(T),\, (u,v):[0,T]\rightarrow Conv(\Omega)  \text{ is a Lebesgue measurable function},
\eeq
subject to dynamics~\eqref{eq:matrixformuv}.
For this problem, the velocity set is the convex hull of $\mathcal V(x)$, so it is convex and the existence of an optimal control is ensured by classical results
(see~\cite[Theorem 5 p.~271]{LeeMarkus} for instance). Moreover,
\begin{itemize}
\item The supremum in problem \eqref{eq:cost1} is equal to the maximum in problem \eqref{eq:cost2}.
\item Let $(u^*,v^*): [0,T]\rightarrow Conv(\Omega)$ be a measurable function which is optimal for problem \eqref{eq:cost2}. Then one can easily construct a sequence of piecewise constant
functions $(u^n,v^n)_{n\in \mathbb{N}}\rightarrow  \Omega$  such that, for any $\varphi\in L^{\infty}(0,T)$,
\begin{gather}
\label{weakconvergence}
\int_0^T u^n\varphi\, dt \longrightarrow  \int_0^T u^*\varphi\, dt \quad \text{ and }
\int_0^T v^n\varphi\, dt\longrightarrow \int_0^T v^*\varphi\, dt\quad \text{as}\quad n\rightarrow +\infty.
\end{gather}
Let us emphasize that \eqref{weakconvergence} implies that
$$
J(u^n,v^n)\rightarrow J(u^*,v^*) \text{ as } n\rightarrow +\infty.
$$
In particular, $(u^n,v^n)_{n\in \mathbb{N}}$ is a maximizing sequence for problem \eqref{eq:cost1}.
\end{itemize}

Now we want to obtain information on the optimal controls for \eqref{eq:cost2}
by using the Pontryagin maximum principle. This principle in our case gives the following theorem.

\begin{theorem}[Pontryagin Maximum Principle (PMP)]\label{th:PMP}
Let $(u^*,v^*)$ be an optimal control for problem \eqref{eq:cost2} and let $x^*$  be the corresponding trajectory (i.e., the solution of \eqref{eq:matrixformuv} with $(u,v):=(u^*,v^*)$).
Call $p^*:\, [0,T]\rightarrow \R^n$ the row vector solution of the adjoint linear equation
\begin{equation}\label{eq:adjoint}
\dot p^*(t)=-p^*(t)(u^*F+v^*G),
\end{equation}
with the transversality condition
\begin{equation}\label{eq:trans}
p^*(T)=\psi.
\end{equation}
Let us define the Hamiltonian as
\begin{equation}
\label{defHamiltonien}
H(x,p,u,v):= p(uF+vG)x.
\end{equation}
Then the maximality condition
\begin{equation} \label{eq:maximality}
H(x^*(t),p^*(t),u^*(t),v^*(t))=\max_{(u,v)\in Conv(\Omega)}H(x^*(t),p^*(t),u,v)
\end{equation}
holds for almost every time $t\in[0,T]$ and there exists a constant $H^*\in \R$  such that
\begin{gather}
\label{Hconstant}
H(x^*(t),p^*(t),u^*(t),v^*(t))=H^*, \, \text{ for almost every } t \in [0,T].
\end{gather}
\end{theorem}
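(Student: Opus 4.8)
The plan is to run the classical needle-variation argument for a Mayer problem, using that system \eqref{eq:matrixformuv} is linear in the state $x$ and affine in the control $(u,v)$, so that the first-order variational analysis is essentially exact and every remainder is controlled by an elementary Gronwall estimate. Let $(u^*,v^*)$, $x^*$ and $p^*$ be as in the statement ($p^*$ is well defined on $[0,T]$ since \eqref{eq:adjoint} is linear with bounded measurable coefficients). The starting observation is that for \emph{any} admissible control $(u,v):[0,T]\to Conv(\Omega)$, with associated trajectory $x$ solving \eqref{eq:matrixformuv}, one has from \eqref{eq:adjoint} and \eqref{eq:matrixformuv}
\[\frac{d}{dt}\big(p^*(t)x(t)\big)=\dot p^*(t)x(t)+p^*(t)\dot x(t)=p^*(t)\big[(u(t)-u^*(t))F+(v(t)-v^*(t))G\big]x(t).\]
Integrating over $[0,T]$, using $x(0)=x^0=x^*(0)$ and $p^*(T)=\psi$, and evaluating the constant $p^*(0)x^0=J(u^*,v^*)$ (take $(u,v)=(u^*,v^*)$, for which the integrand above vanishes), we obtain
\[J(u,v)-J(u^*,v^*)=\int_0^T p^*(t)\big[(u(t)-u^*(t))F+(v(t)-v^*(t))G\big]x(t)\,dt,\]
which is $\le 0$ for every admissible $(u,v)$ by optimality of $(u^*,v^*)$.

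To pass from this global inequality to the pointwise maximality \eqref{eq:maximality}, fix $(\bar u,\bar v)\in Conv(\Omega)$ and a Lebesgue point $\tau\in(0,T)$ of $t\mapsto(u^*(t),v^*(t))$, and for $\varepsilon>0$ small let $(u^\varepsilon,v^\varepsilon)$ be the spike control equal to $(\bar u,\bar v)$ on $[\tau-\varepsilon,\tau]$ and to $(u^*,v^*)$ elsewhere, with trajectory $x^\varepsilon$. Since the perturbation acts only on an interval of length $\varepsilon$ and the coefficients are bounded, a Gronwall estimate gives $x^\varepsilon=x^*+O(\varepsilon)$ uniformly on $[0,T]$, and in particular $x^\varepsilon(t)=x^*(\tau)+O(\varepsilon)$ for $t\in[\tau-\varepsilon,\tau]$. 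Substituting $(u^\varepsilon,v^\varepsilon)$ into the integral identity above — where the integrand now vanishes off $[\tau-\varepsilon,\tau]$ — and using continuity of $p^*$ and $x^*$ together with the Lebesgue point property,
\[0\ge J(u^\varepsilon,v^\varepsilon)-J(u^*,v^*)=\varepsilon\,p^*(\tau)\big[(\bar u-u^*(\tau))F+(\bar v-v^*(\tau))G\big]x^*(\tau)+o(\varepsilon).\]
Dividing by $\varepsilon$ and letting $\varepsilon\to0^+$ yields $H(x^*(\tau),p^*(\tau),\bar u,\bar v)\le H(x^*(\tau),p^*(\tau),u^*(\tau),v^*(\tau))$; since $(\bar u,\bar v)$ is an arbitrary point of the compact set $Conv(\Omega)$ and almost every $\tau$ is such a Lebesgue point, \eqref{eq:maximality} follows.

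Finally, for the constancy \eqref{Hconstant} — the one point that requires real care, because $u^*$ is merely measurable so that $t\mapsto H(x^*(t),p^*(t),u^*(t),v^*(t))$ cannot be differentiated directly — I would work with the maximized Hamiltonian $\mathcal M(t):=\max_{(u,v)\in Conv(\Omega)}H(x^*(t),p^*(t),u,v)$, which by \eqref{eq:maximality} equals the left-hand side of \eqref{Hconstant} for a.e.\ $t$. Because $H$ is affine in $(u,v)$ with coefficients $p^*(t)Fx^*(t)$ and $p^*(t)Gx^*(t)$ that are Lipschitz in $t$ (the functions $x^*,p^*$ being absolutely continuous and bounded) and $Conv(\Omega)$ is bounded, $\mathcal M$ is a supremum of a uniformly Lipschitz family, hence itself Lipschitz, hence differentiable a.e. At a differentiability point $t$ that is also a point where \eqref{eq:maximality} holds, write $A:=u^*(t)F+v^*(t)G$ and compare $\mathcal M(t+h)\ge H(x^*(t+h),p^*(t+h),u^*(t),v^*(t))=p^*(t+h)Ax^*(t+h)$ with $\mathcal M(t)=p^*(t)Ax^*(t)$; letting $h\to0^+$ and $h\to0^-$ in the resulting difference quotients forces
\[\mathcal M'(t)=\frac{d}{ds}\Big|_{s=t}\big(p^*(s)Ax^*(s)\big)=\dot p^*(t)Ax^*(t)+p^*(t)A\dot x^*(t)=-p^*(t)A^2x^*(t)+p^*(t)A^2x^*(t)=0,\]
where \eqref{eq:adjoint} and \eqref{eq:matrixformuv} were used. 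Thus $\mathcal M$ is absolutely continuous with a.e.\ vanishing derivative, hence constant; naming this constant $H^*$ gives \eqref{Hconstant}. Apart from this measurable-control subtlety in the last step, everything is routine for a linear system: the global identity and the spike estimates carry the whole argument.
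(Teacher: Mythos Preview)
Your argument is correct. The paper, however, does not prove Theorem~\ref{th:PMP} at all: it is stated as the classical Pontryagin Maximum Principle specialized to the present linear-in-state, control-affine Mayer problem, and the reader is implicitly referred to standard references (such as Lee--Markus, already cited for the relaxation). So there is no ``paper's own proof'' to compare with.

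That said, your write-up is a clean, self-contained derivation tailored to the linear structure of \eqref{eq:matrixformuv}: the exact integral identity $J(u,v)-J(u^*,v^*)=\int_0^T p^*[(u-u^*)F+(v-v^*)G]x\,dt$ replaces the usual variational equation, the spike variation then gives \eqref{eq:maximality} immediately, and the constancy \eqref{Hconstant} is handled via the maximized Hamiltonian $\mathcal M$, which is the standard device when the optimal control is merely measurable. One minor cosmetic point: in Step~2 what you actually need is that $\tau$ be a Lebesgue point of $t\mapsto p^*(t)(u^*(t)F+v^*(t)G)x^*(t)$ (and a point of differentiability of $x^*,p^*$ in Step~3), not just of $(u^*,v^*)$; but these are all a.e.\ conditions, so nothing is lost.
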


\begin{remark}
Since, for any positive $u$ and $v$, the matrix $uF+vG$ has nonnegative extra-diagonal entries, we have, using \eqref{eq:matrixformuv}, \eqref{defpsi}, \eqref{eq:adjoint} and \eqref{eq:trans},
\begin{equation}\label{xandp>0}
x^*(t)>0 \quad \text{and}\quad p^*(t)>0,\quad \text{for every time}\quad t\in[0,T].
\end{equation}

\end{remark}

\

The Pontryagin maximum principle is useful to obtain information on the optimal control. It allows us to prove
(Corollary~\ref{corollarySigma}~below) that the optimal control lies on the line $\Sigma$ defined by (see Figure~\ref{fig:convexhull})
\beq\label{def:Sigma}\Sigma:=Graph(\sigma)=\left\{(u,\sigma(u)),\ u_{\min}\leq u\leq u_{\max}\right\},\eeq
where $\sigma$ is the affine function defined by
\[\sigma(u)=\theta u+\zeta\]
with
\begin{gather}
\label{deftheta}
\theta:=\f{r(u_{\max})-r(u_{\min})}{u_{\max}-u_{\min}},
\\
\label{defzeta}
\zeta:=\f{u_{\max}r(u_{\min})-u_{\min}r(u_{\max})}{u_{\max}-u_{\min}}.
\end{gather}
The set $\Sigma$ is the string which links $(u_{\min},r(u_{\min}))$ to $(u_{\max},r(u_{\max}))$.
Since $r$ is convex, the boundary of the control set $Conv(\Omega)$ is $\partial Conv(\Omega)=\Omega\cup\Sigma$.

One has the following lemma which is illustrated by the incoming arrows in Figure~\ref{fig:convexhull}.
\begin{lemma}\label{lm:convex}
Let $(u,v)\in Conv(\Omega)\setminus\Sigma$. Then, for $\varepsilon > 0$ small enough,
\[((1+\varepsilon)u,(1+\varepsilon)v)\in Conv(\Omega).\]
\end{lemma}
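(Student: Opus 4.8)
The plan is to first pin down the exact shape of $Conv(\Omega)$ and then to verify, for an admissible point lying off $\Sigma$, that each of the inequalities cutting out $Conv(\Omega)$ is preserved under the small radial dilation $(u,v)\mapsto(1+\varepsilon)(u,v)$.

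First I would record the elementary description
\[Conv(\Omega)=\bigl\{(u,v):\ u_{\min}\le u\le u_{\max},\ r(u)\le v\le\sigma(u)\bigr\},\]
that is, the region between the graph $\Omega$ and the chord $\Sigma$ (consistent with the identity $\partial\,Conv(\Omega)=\Omega\cup\Sigma$ already noted): the inclusion ``$\subseteq$'' uses convexity of $r$, namely $\sum t_i r(u_i)\ge r(\sum t_i u_i)$ and, $\sigma$ being affine with $\sigma\ge r$ on $[u_{\min},u_{\max}]$, also $\sum t_i r(u_i)\le\sigma(\sum t_i u_i)$; and ``$\supseteq$'' follows since for fixed $u_0$ the point $(u_0,\sigma(u_0))$ lies on the segment joining the two endpoints of $\Omega$, so the whole vertical segment from $(u_0,r(u_0))$ to $(u_0,\sigma(u_0))$ is in $Conv(\Omega)$. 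Two consequences I would draw: since $\sigma$ and $r$ agree at $u_{\min}$ and at $u_{\max}$, the fibre of $Conv(\Omega)$ over each of these abscissae is a single point lying on $\Sigma$; hence any $(u,v)\in Conv(\Omega)\setminus\Sigma$ has $u\in(u_{\min},u_{\max})$ and satisfies $r(u)\le v<\sigma(u)$, with in particular $u\ge u_{\min}>0$ and $v\ge r(u)>0$ by \eqref{rpositive}.

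Next, given $(u,v)\in Conv(\Omega)\setminus\Sigma$, I would check the three defining constraints at $((1+\varepsilon)u,(1+\varepsilon)v)$ for small $\varepsilon>0$. The abscissa constraint $(1+\varepsilon)u\in[u_{\min},u_{\max}]$ holds for $0<\varepsilon<u_{\max}/u-1$. The upper constraint $(1+\varepsilon)v\le\sigma((1+\varepsilon)u)$ holds for $\varepsilon$ small by continuity, since $\varepsilon\mapsto\sigma((1+\varepsilon)u)-(1+\varepsilon)v$ equals $\sigma(u)-v>0$ at $\varepsilon=0$. The lower constraint $r((1+\varepsilon)u)\le(1+\varepsilon)v$ is the only one requiring a genuine argument; as $v\ge r(u)$, it suffices to prove $r((1+\varepsilon)u)\le(1+\varepsilon)r(u)$, and this is where hypothesis \eqref{as:r3} is used: $(r(u)/u)'=(ur'(u)-r(u))/u^2<0$ on $(u_{\min},u_{\max})$ by \eqref{as:r3}, so $u\mapsto r(u)/u$ is strictly decreasing on $[u_{\min},u_{\max}]$, whence $r((1+\varepsilon)u)/((1+\varepsilon)u)\le r(u)/u$ as soon as $(1+\varepsilon)u\le u_{\max}$; equivalently, $h(\varepsilon):=(1+\varepsilon)r(u)-r((1+\varepsilon)u)$ satisfies $h(0)=0$ and $h'(0)=r(u)-ur'(u)>0$. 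Taking $\varepsilon$ below all the thresholds above then yields $((1+\varepsilon)u,(1+\varepsilon)v)\in Conv(\Omega)$.

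The argument is essentially routine; the point I would flag as the (mild) main obstacle is that $(u,v)$ may sit on the lower boundary $\Omega$ rather than in the interior of $Conv(\Omega)$, so the conclusion cannot come from a soft openness argument and genuinely relies on the sign condition \eqref{as:r3}: geometrically, \eqref{as:r3} says the tangent line to $\Omega$ at any interior point has positive intercept with the $v$-axis, which is precisely what makes the outward radial direction point into $Conv(\Omega)$ along $\Omega$. The only bookkeeping to keep track of is gathering the finitely many smallness requirements on $\varepsilon$ into a single $\varepsilon_*>0$ depending on $(u,v)$.
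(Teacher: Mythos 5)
Your proof is correct and follows essentially the same route as the paper's: the crux in both is deducing $r((1+\varepsilon)u)\leq(1+\varepsilon)r(u)$ from \eqref{as:r3} (you via monotonicity of $r(u)/u$, the paper via a first-order expansion), combined with the strict inequality $v<\sigma(u)$ off $\Sigma$. Your explicit description of $Conv(\Omega)$ as $\{r(u)\le v\le\sigma(u)\}$ merely makes uniform what the paper handles by splitting into the interior case and the lower-boundary case.
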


\begin{figure}[h!]
\psfrag{Omega}[l]{$\Omega$}
\psfrag{H(Omega)}[l]{$Conv(\Omega)$}
\psfrag{Sigma}[l]{$\Sigma$}
\psfrag{u}[l]{$u$}
\psfrag{v}[l]{$v$}
\psfrag{ru}[l]{$r(u)$}
\begin{center}
\includegraphics [width=.55\linewidth]{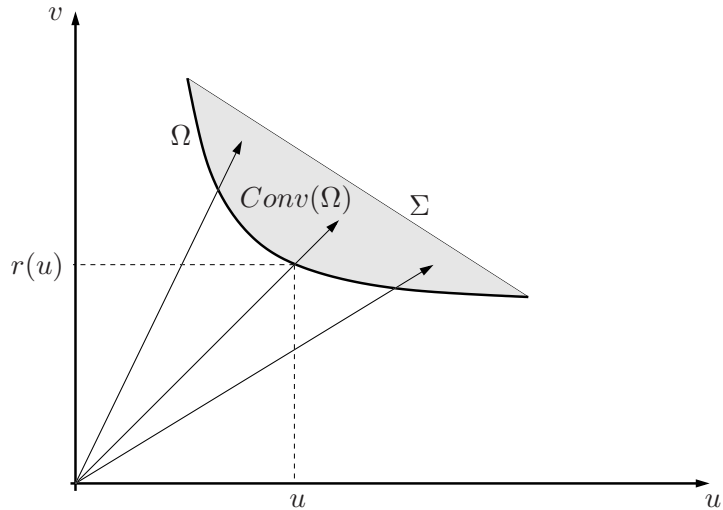}
\end{center}
\caption{\label{fig:convexhull}The set $\Omega,$ the string $\Sigma$ and the convex hull $Conv(\Omega)$ for $r$ a decreasing and convex function. The arrows oriented along the vectors $(u,r(u))$ point inside $Conv(\Omega)$ on the lower boundary $\Omega.$}
\end{figure}

\begin{proof}[Proof of Lemma \ref{lm:convex}]
Let $Int(Conv(\Omega))$ be the interior of $Conv(\Omega)$. If $(u,v)\in Int(Conv(\Omega))$, then the result follows from the fact that $Int(Conv(\Omega))$ is an open set by definition.
It remains to study the case where $(u,v)\in\Omega\setminus\{(u_{\min},r(u_{\min}))\}\cup\{(u_{\max},r(u_{\max}))\}$, \textit{i.e.,} $v=r(u)$ with $u\in(u_{\min},u_{\max})$.
We have, using \eqref{as:r3},
\begin{align*}
(1+\varepsilon)r(u)-r((1+\varepsilon)u)&=(1+\varepsilon)r(u)-r(u)-\varepsilon ur'(u)+\underset{\varepsilon \to 0^+}{o}(\varepsilon)\\
&=\varepsilon (r(u)-u\,r'(u)+\underset{\varepsilon \to 0^+}{o}(1))\geq 0,\quad \text{for $\varepsilon$ small and positive}.
\end{align*}
Hence, if $\varepsilon> 0$ is small enough, $r((1+\varepsilon)u)\leq (1+\varepsilon)r(u).$
Moreover, by \eqref{rsecondpositive}, we have $r(u)<\sigma(u)$ and, therefore, if $\varepsilon$ is small enough,  $(1+\varepsilon)r(u)\leq \sigma((1+\varepsilon)u)$.

The proof is complete.
\end{proof}

As a consequence of Theorem~\ref{th:PMP} and Lemma~\ref{lm:convex}, we have the following corollary.

\begin{corollary}
\label{corollarySigma}
Let $(u^*,v^*)$ be an optimal control for problem \eqref{eq:cost2}. Then, for almost every time $t\in [0,T]$, the optimal control $(u^*(t),v^*(t))\in\Sigma$.
\end{corollary}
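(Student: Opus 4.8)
The plan is to argue by contradiction using the maximality condition \eqref{eq:maximality} together with the geometric fact established in Lemma~\ref{lm:convex}. Suppose that on a set of positive measure we had $(u^*(t),v^*(t))\in Conv(\Omega)\setminus\Sigma$. Fix such a time $t$ (a Lebesgue point where \eqref{eq:maximality} holds). By Lemma~\ref{lm:convex}, for $\varepsilon>0$ small enough the scaled pair $((1+\varepsilon)u^*(t),(1+\varepsilon)v^*(t))$ still lies in $Conv(\Omega)$, hence is an admissible competitor in the maximization \eqref{eq:maximality}.

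Next I would compare the Hamiltonian values. From the definition \eqref{defHamiltonien}, $H(x,p,u,v)=p(uF+vG)x$ is linear in $(u,v)$, so
\[
H\bigl(x^*(t),p^*(t),(1+\varepsilon)u^*(t),(1+\varepsilon)v^*(t)\bigr)=(1+\varepsilon)\,H\bigl(x^*(t),p^*(t),u^*(t),v^*(t)\bigr).
\]
The key observation is that this Hamiltonian value is strictly positive: since $p^*(t)>0$ and $x^*(t)>0$ by \eqref{xandp>0}, and since the matrix $u^*(t)F+v^*(t)G$ has nonnegative off-diagonal entries with at least one strictly positive entry (the subdiagonal $\tau_i$'s are positive by \eqref{constpositive}, $v^*(t)>0$ because $(u^*(t),v^*(t))\in Conv(\Omega)$ forces $v^*(t)\geq r(u^*(t))>0$), the vector $(u^*(t)F+v^*(t)G)x^*(t)$ has a strictly positive coordinate, so $H(x^*(t),p^*(t),u^*(t),v^*(t))>0$. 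Therefore multiplying by $(1+\varepsilon)>1$ strictly increases the Hamiltonian, contradicting the maximality \eqref{eq:maximality} at time $t$. Hence $(u^*(t),v^*(t))\in\Sigma$ for almost every $t$.

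The main obstacle is the positivity of $H$ along the optimal trajectory, which is exactly what makes the scaling trick of Lemma~\ref{lm:convex} bite. This requires carefully invoking \eqref{xandp>0} for $x^*$ and $p^*$, and checking that the admissible control always has a strictly positive $v$-component so that the growth matrix $G$ contributes a genuinely positive entry; both are guaranteed by \eqref{constpositive}, \eqref{rpositive}, and the structure of $Conv(\Omega)$. Once positivity of $H$ is in hand, the contradiction is immediate and the corollary follows.
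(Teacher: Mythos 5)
Your overall strategy---scale $(u^*(t),v^*(t))$ by $1+\varepsilon$ via Lemma~\ref{lm:convex}, use the homogeneity of $H$ in $(u,v)$, and contradict \eqref{eq:maximality} provided $H>0$ at time $t$---is exactly the paper's. The gap is in your justification that $H(x^*(t),p^*(t),u^*(t),v^*(t))>0$ at an arbitrary time $t$. You argue that because $u^*(t)F+v^*(t)G$ has nonnegative off-diagonal entries with at least one strictly positive, the vector $(u^*(t)F+v^*(t)G)x^*(t)$ has a strictly positive coordinate and hence the pairing with $p^*(t)>0$ is positive. This inference is invalid: the diagonal entries of $uF+vG$ are negative ($-v\tau_i$ and $-u\beta_i$), so $(uF+vG)x$ generally has negative coordinates too, and a vector with one positive coordinate can pair negatively with a positive $p$. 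Already for $n=2$ one has $p(uF+vG)x=(p_2-p_1)v\tau x_1+(2p_1-p_2)u\beta x_2$, which is negative for, say, $p=(1,10)$ and suitable $x>0$. Positivity of the Perron eigenvalue of the Metzler matrix $uF+vG$ does not give $pMx>0$ for all positive $p,x$ either.

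The positivity of the Hamiltonian along the optimal trajectory is true, but it must be extracted from the PMP rather than from pointwise sign considerations. The paper does this by invoking the constancy of the Hamiltonian \eqref{Hconstant}: it evaluates $H^*$ along a sequence $t_n\to T$, where the transversality condition $p^*(T)=\psi$ together with $\psi F=0$ yields $H^*=\widetilde v\,\psi G x(T)$ for some limit value $\widetilde v\geq r(u_{\max})>0$, and $\psi G x(T)=\sum_{j<n}\tau_j x_j(T)>0$; hence $H^*>0$, and this constant value is the one that gets scaled by $1+\varepsilon$ at the contradiction step. Your proof as written never uses \eqref{Hconstant} or the terminal condition, and without them (or an equivalent propagation argument) the key positivity claim does not close.
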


\begin{proof}[Proof of Corollary \ref{corollarySigma}]

By \eqref{Hconstant}, there exists a sequence $(t_n)_{n\in \mathbb{N}}$ of elements in $[0,T]$ such that
\begin{gather}
\label{Hn=H*}
H(x^*(t_n),p^*(t_n),u^*(t_n),v^*(t_n))=H^*,
\\
\label{tntendtoT}
t_n\rightarrow T \quad\text{ as }\quad n\rightarrow +\infty.
\end{gather}
Extracting a subsequence if necessary we may assume, without loss of generality, that there exists $\widetilde v\in \R$ such that
\begin{gather}\label{propertyv*}
\widetilde v\in [r(u_{\max}),r(u_{\min})]\subset(0,+\infty),
\\
\label{converrgencevtn}
v^*(t_n)\rightarrow \widetilde v \quad\text{ as }\quad n\rightarrow +\infty.
\end{gather}
Letting $n\rightarrow +\infty $ in \eqref{Hn=H*}, and using \eqref{psiF}, \eqref{eq:trans}, \eqref{defHamiltonien}, \eqref{tntendtoT} and \eqref{converrgencevtn}, one gets that
\begin{equation}\label{atT}
H^*= \widetilde v \psi  G x(T).
\end{equation}
 From \eqref{defG}, \eqref{constpositive} and \eqref{defpsi}, one gets that
\begin{equation}\label{psiG>0}
\psi  G >0,
\end{equation}
which, together with \eqref{xandp>0}, implies  that
\begin{equation}\label{psiGx(T)>0}
\psi  G x(T)>0.
\end{equation}
 From \eqref{propertyv*}, \eqref{atT} and \eqref{psiGx(T)>0}, one obtains
\[H^*>0.\]
Let $t\in[0,T]$ be such that
\begin{gather}
\label{inHOmega}
(u^*(t),v^*(t))\in Conv(\Omega)\setminus\Sigma,
\\
\label{tbonpoint}
H(x^*(t),p^*(t),u^*(t),v^*(t))=H^*.
\end{gather}
 From Lemma~\ref{lm:convex} and \eqref{inHOmega}, there exists $\varepsilon>0$ such that $((1+\varepsilon)u^*(t),(1+\varepsilon)v^*(t))\in Conv(\Omega)$.
 Using \eqref{tbonpoint}, one has
\begin{eqnarray*}
\begin{array}{rcl}
H(x^*(t),p^*(t),(1+\varepsilon) u^*(t),(1+\varepsilon)v^*(t))&=&(1+\varepsilon) H(x^*(t),p^*(t),u^*(t),v^*(t))
\\
&=&
(1+\varepsilon)H^*
\\
&>&H^*=
H(x^*(t),p^*(t),u^*(t),v^*(t)),
\end{array}
\end{eqnarray*}
which shows that \eqref{eq:maximality} does not hold. Since, by Theorem \ref{th:PMP},  \eqref{eq:maximality} holds for almost every $t\in [0,T]$, this, together with \eqref{Hconstant}, concludes the proof of  Corollary \ref{corollarySigma}.
\end{proof}

\

Now we look for controls and corresponding trajectories which satisfy the optimality condition~\eqref{eq:maximality}.
To that end, we take advantage of our analysis of the Perron eigenvalue problem.
For $(u,v)\in Conv(\Omega)$, define the Perron eigenvalue $\lb_P=\lb_P(u,v)$ of the matrix $uF+vG$ and the corresponding right and left eigenvectors $X>0$ and $\phi>0$ normalized as follows
\begin{gather}
\label{defX}
\lb_P X=(u F+v G)X,\quad\|X\|_1=1,
\\
\label{defphi}
\lb_P \phi=\phi(u F+v G),\quad\phi X=1,
\end{gather}
where, for $x=(x_1,\cdots,x_n)^T $, $\|x\|_1:=\sum_{i=1}^n |x_i|$. The function $(u,v)\mapsto\lb_P(u,v)$ admits an optimum $(\bar u,\bar v)$ on the compact set $Conv(\Omega)$ (See Figure~\ref{fig:eigenvalue} for the numerical simulations). We denote by $\bar\lb_P$, $\bar X$ and $\bar\phi$ the corresponding optimal eigenelements.
First, we notice that Lemma~\ref{lm:convex} implies that, as for the optimal control, the optimum $(\bar u,\bar v)$ of $\lb_P$ belongs to $\Sigma$.

\begin{corollary}
 \label{cor:positionbarubarv} The optimal point $(\bar u,\bar v)=\argmax_{(u,v)\in Conv(\Omega)}\lb_P(u,v)$ satisfies
\[(\bar u,\bar v)\in\Sigma\setminus\bigl\{(u_{\min},r(u_{\min})),(u_{\max},r(u_{\max}))\bigr\}.\]
\end{corollary}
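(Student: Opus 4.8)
The plan is to mimic the proof of Corollary~\ref{corollarySigma}, replacing the Hamiltonian of an optimal trajectory by the Perron eigenvalue itself. The key observation is that $\lambda_P$ is positively homogeneous of degree one along rays through the origin: if $(u,v)\in Conv(\Omega)$ and $\rho>0$, then $\rho u F+\rho v G=\rho(uF+vG)$, so this matrix has Perron eigenvalue $\rho\,\lambda_P(u,v)$ with the same eigenvectors. Hence if $(\bar u,\bar v)\in Conv(\Omega)\setminus\Sigma$, Lemma~\ref{lm:convex} produces $\varepsilon>0$ with $((1+\varepsilon)\bar u,(1+\varepsilon)\bar v)\in Conv(\Omega)$, and then $\lambda_P((1+\varepsilon)\bar u,(1+\varepsilon)\bar v)=(1+\varepsilon)\lambda_P(\bar u,\bar v)>\lambda_P(\bar u,\bar v)$, contradicting optimality — provided we know $\lambda_P(\bar u,\bar v)>0$.

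So the first step is to establish $\bar\lambda_P>0$. For $(u,v)\in\Omega$, i.e. $v=r(u)>0$ with $u>0$, the matrix $uF+vG$ is irreducible with nonnegative off-diagonal entries (as recalled at the start of Section~\ref{sec:eigenvalue}), so by Perron--Frobenius its dominant eigenvalue is simple; positivity of $\lambda_P$ on $\Omega$ follows exactly as in Section~\ref{sec:eigenvalue} (it is the exponential growth rate of $\dot x=(uF+vG)x$ with $x^0>0$, and e.g. the mass $\psi x(t)$ is nondecreasing since $\psi F=0$ by \eqref{psiF} and $\psi G\geq 0$, indeed $\psi G>0$ coordinatewise by \eqref{psiG>0}, so the mass strictly increases, forcing $\lambda_P>0$). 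Since $\bar\lambda_P=\max_{Conv(\Omega)}\lambda_P\geq\lambda_P(u_\mathrm{opt},r(u_\mathrm{opt}))>0$, we are done with this step. One should also check that $\lambda_P$ is continuous on $Conv(\Omega)$ (simple eigenvalues depend continuously, or even real-analytically, on the matrix entries) so that ``$\argmax$'' is meaningful on the compact set $Conv(\Omega)$; this is already asserted in the text preceding the corollary.

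The second step is to rule out the two endpoints $(u_{\min},r(u_{\min}))$ and $(u_{\max},r(u_{\max}))$. This is precisely the content of the assumption that there is a \emph{strict} optimum $u_\mathrm{opt}\in(u_{\min},u_{\max})$ for the Perron eigenvalue: the maximum of $u\mapsto\lambda_P(u,r(u))$ over $[u_{\min},u_{\max}]$ is attained only at the interior point $u_\mathrm{opt}$, hence $\lambda_P(u_{\min},r(u_{\min}))<\bar\lambda_P$ and $\lambda_P(u_{\max},r(u_{\max}))<\bar\lambda_P$, so neither endpoint can be the maximizer over $Conv(\Omega)$. Combining with the first step, $(\bar u,\bar v)\in\Sigma\setminus\{(u_{\min},r(u_{\min})),(u_{\max},r(u_{\max}))\}$, which is the claim. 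The main obstacle — if any — is being careful with the bookkeeping around Lemma~\ref{lm:convex}: that lemma is stated for points of $Conv(\Omega)\setminus\Sigma$, so the homogeneity argument directly excludes the interior and the lower boundary $\Omega$ minus its corners, while the corners and the upper string $\Sigma$ must be excluded separately, the corners by the strictness of $u_\mathrm{opt}$ and $\Sigma$ being exactly the set we want to land in. No genuinely hard estimate is involved; the proof is short once the homogeneity of $\lambda_P$ and the positivity $\bar\lambda_P>0$ are in place.
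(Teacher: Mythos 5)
Your proof is correct and follows essentially the same route as the paper's: positivity of $\lambda_P$ on $Conv(\Omega)$, homogeneity $\lambda_P(\alpha u,\alpha v)=\alpha\lambda_P(u,v)$ combined with Lemma~\ref{lm:convex} to force $(\bar u,\bar v)\in\Sigma$, and the strictness of $u_\mathrm{opt}$ to exclude the two endpoints. The only cosmetic difference is that the paper derives $\lambda_P>0$ directly from the identity $\lambda_P(u,v)\,\psi X=v\,\psi GX$ (using $\psi F=0$ and $\psi G>0$) at every point of $Conv(\Omega)$, whereas you argue dynamically at a single point and use $\bar\lambda_P\geq\lambda_P(u_\mathrm{opt},r(u_\mathrm{opt}))>0$; both are fine.
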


\begin{proof}[Proof of Corollary~\ref{cor:positionbarubarv}]
Multiplying \eqref{defX} on the left by $\psi$ and using \eqref{psiF}, one gets
\begin{equation}\label{uncalcullambda}
  \lambda_P(u,v) \psi X =v \psi GX , \qquad \forall (u,v)\in Conv(\Omega).
\end{equation}
From $\psi >0$ (see \eqref{defpsi}), $X>0$, \eqref{psiG>0} and \eqref{uncalcullambda}, one gets
\begin{equation}\label{lambda>0}
 \lambda_P(u,v) >0, \qquad \forall (u,v)\in Conv(\Omega).
\end{equation}
(For a different proof of \eqref{lambda>0}, see the proof of Lemma~\ref{lm:positivity}.)
From Lemma~\ref{lm:convex}, \eqref{lambda>0} and from the following linearity of the eigenvalue
\[\forall\alpha,u,v>0,\qquad\lb_P(\alpha u,\alpha v)=\alpha\lb_P(u,v),\]
we deduce that $(\bar u,\bar v)\in\Sigma.$
Moreover $(\bar u,\bar v)\not\in\bigl\{(u_{\min},r(u_{\min})),(u_{\max},r(u_{\max}))\bigr\}$ because we have assumed that $u_{\text{opt}}=\argmax_{u\in[u_\text{min},u_\text{max}]}\lb_P(u,r(u))\in(u_\text{min},u_\text{max}).$
\end{proof}

\begin{figure}

\begin{center}
\vspace{3cm}
\begin{minipage}{0.8\textwidth}
\setlength\unitlength{1cm}
\begin{picture}(12,6)
\put(0,0){\includegraphics[width=\textwidth]{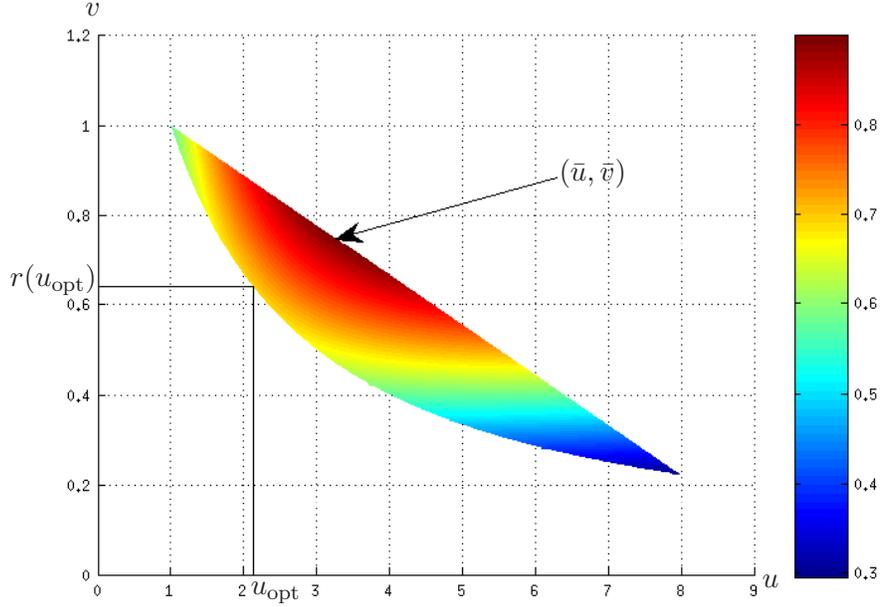}}
\put(.6,4.8){$r(u_\mathrm{opt})$}
\put(3.8,.7){$u_\mathrm{opt}$}
\put(7.9,6.2){$(\bar u,\bar v)$}
\put(10.6,.8){$u$}
\put(1.6,8.4){$v$}
\end{picture}
\end{minipage}
\end{center}

\caption{The eigenvalue function $\lb_P(u,v)$ on the convex hull $Conv(\Omega)$ for $r(u)=\f{2}{1+u},$ $u_{\min}=1,$ $u_{\max}=8,$ $n=3,$ $\tau_1=1,$ $\tau_2=10,$ $\beta_2=0.5,$ $\beta_3=1,$ $\kappa_{1,2}=2$ and $\kappa_{1,3}=\kappa_{2,3}=1.$}\label{fig:eigenvalue}

\end{figure}

We now prove that $(\bar u,\bar v)$, associated with accurate trajectories, satisfies the optimality condition~\eqref{eq:maximality}.

\begin{proposition}
\label{prop:exsolPerron}
Let $R>0$ and $S>0$. Then the constant control $(\bar u,\bar v)$ and the associated canonical direct and adjoint trajectories
\beq\label{eq:opt_cst_tra}\left\{\begin{array}{l}
\bar x(t)=R\bar X\,e^{\bar \lb_P t},\\
\bar p(t)=S\bar \phi\,e^{-\bar\lb_P t},
\end{array}\right.\eeq
satisfy the maximality condition
\begin{equation}
\label{maximality-condition}
H(\bar x(t),\bar p(t),\bar u,\bar v)=\max_{(u,v)\in Conv(\Omega)}H(\bar x(t),\bar p(t),u,v), \qquad \forall t\in[0,T].
\end{equation}
\end{proposition}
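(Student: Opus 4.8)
The plan is to evaluate both sides of \eqref{maximality-condition} explicitly using the eigenvalue structure, and to reduce the maximality condition to an inequality that characterizes $(\bar u,\bar v)$ as the maximizer of $\lambda_P$ on $Conv(\Omega)$. First I would plug the canonical trajectories \eqref{eq:opt_cst_tra} into the Hamiltonian: for any $(u,v)\in Conv(\Omega)$,
\[H(\bar x(t),\bar p(t),u,v)=\bar p(t)(uF+vG)\bar x(t)=RS\,e^{(\bar\lambda_P-\bar\lambda_P)t}\,\bar\phi(uF+vG)\bar X=RS\,\bar\phi(uF+vG)\bar X.\]
Note the time dependence cancels, so it suffices to prove the inequality $\bar\phi(uF+vG)\bar X\leq\bar\phi(\bar uF+\bar vG)\bar X$ for all $(u,v)\in Conv(\Omega)$; the right-hand side equals $\bar\lambda_P\,\bar\phi\bar X=\bar\lambda_P$ by \eqref{defX}–\eqref{defphi}.

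The key step is to bound $\bar\phi(uF+vG)\bar X$ from above by $\lambda_P(u,v)$, and then invoke the optimality of $(\bar u,\bar v)$ for the Perron eigenvalue on $Conv(\Omega)$ (as established just above, via Corollary~\ref{cor:positionbarubarv} and the existence of the optimum) to get $\lambda_P(u,v)\leq\bar\lambda_P$. To prove $\bar\phi(uF+vG)\bar X\leq\lambda_P(u,v)$, I would use that $\bar\phi>0$ and $\bar X>0$ together with a Collatz–Wielandt / Rayleigh-type characterization of the Perron eigenvalue: for an irreducible matrix $M=uF+vG$ with nonnegative off-diagonal entries and positive right Perron eigenvector $Y$ (normalized), the quantity $zMw$ for arbitrary positive row vector $z$ and positive column vector $w$ can be compared to $\lambda_P(M)$. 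Concretely, writing $M Y=\lambda_P(u,v)Y$, one can estimate
\[\bar\phi M\bar X=\sum_{i,j}\bar\phi_i M_{ij}\bar X_j=\sum_{i,j}\bar\phi_i M_{ij}Y_j\frac{\bar X_j}{Y_j}\le\Big(\max_j\frac{\bar X_j}{Y_j}\Big)\sum_{i,j}\bar\phi_i M_{ij}Y_j=\Big(\max_j\frac{\bar X_j}{Y_j}\Big)\lambda_P(u,v)\,\bar\phi Y,\]
where the inequality uses $M_{ij}\ge0$ for $i\ne j$ — but the diagonal terms have the wrong sign, so this crude bound needs care. The cleaner route is to work with the shifted matrix $M+cI$ for $c$ large enough that all entries are nonnegative: then $M+cI$ has Perron eigenvalue $\lambda_P(u,v)+c$ with the same eigenvectors, and $\bar\phi(M+cI)\bar X\le(\lambda_P(u,v)+c)\big(\max_j\bar X_j/Y_j\big)\bar\phi Y$ from nonnegativity; combined with the analogous lower bound using $\min_j$, and the fact that $\bar X$ itself is the Perron vector at $(\bar u,\bar v)$ so that at the optimum these ratios are all $1$, one recovers the desired inequality after the shift cancels.

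I expect the main obstacle to be getting the sign bookkeeping right in this Rayleigh-type estimate: the matrices $uF+vG$ are not nonnegative (their diagonals are negative), so the comparison argument must go through a uniform shift $M\mapsto M+cI$, and one must check that the normalization $\bar\phi\bar X=1$ together with $\|\bar X\|_1=1$ is compatible with the bounds. An alternative, perhaps more transparent, approach avoids Collatz–Wielandt entirely: use the linearity $\lambda_P(\alpha u,\alpha v)=\alpha\lambda_P(u,v)$ and the concavity/convexity structure already exploited in Corollary~\ref{cor:positionbarubarv}, noting that the map $(u,v)\mapsto\bar\phi(uF+vG)\bar X$ is \emph{linear} in $(u,v)$, whereas $(u,v)\mapsto\lambda_P(u,v)$ is concave and homogeneous of degree one, and the two coincide (with coinciding gradients) at $(\bar u,\bar v)$; hence the linear function is a supporting hyperplane of the concave one at the maximizer, which forces $\bar\phi(uF+vG)\bar X\le\lambda_P(u,v)\le\bar\lambda_P$ on $Conv(\Omega)$. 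Either way the proof closes by observing the inequality holds for every $(u,v)\in Conv(\Omega)$ and every $t\in[0,T]$, which is exactly \eqref{maximality-condition}.
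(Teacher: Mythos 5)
Your reduction is the right one and matches the paper's starting point: along the canonical trajectories the time dependence cancels, $H(\bar x(t),\bar p(t),u,v)=RS\,\bar\phi(uF+vG)\bar X$, and everything comes down to showing that the affine function $L(u,v):=\bar\phi(uF+vG)\bar X$ satisfies $L(u,v)\le L(\bar u,\bar v)=\bar\lambda_P$ on $Conv(\Omega)$. However, both of the closings you propose have genuine gaps. The Collatz--Wielandt/shift route does not deliver the intermediate inequality $\bar\phi(uF+vG)\bar X\le\lambda_P(u,v)$: after replacing $M$ by $M+cI$ your bound reads $\bar\phi M\bar X+c\le(\lambda_P(u,v)+c)\bigl(\max_j\bar X_j/Y_j\bigr)\bar\phi Y$, and the factor $\bigl(\max_j\bar X_j/Y_j\bigr)\bar\phi Y$ is not $1$ in general, so the shift does not cancel. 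Indeed no inequality of the form $zMw\le\lambda_P(M)$ holds for arbitrary positive $z,w$ with $zw=1$ (already for $2\times2$ nonnegative matrices one can make $zMw$ arbitrarily large under that normalization). The second route has the inequality reversed: a concave function lies \emph{below} its tangent plane at any point, so concavity of $\lambda_P$ (which is in any case not established in the paper for general $n$; it is only proved for $n=2$ in the appendix) would give $\lambda_P(u,v)\le L(u,v)$, not $L(u,v)\le\lambda_P(u,v)$. So the chain $L\le\lambda_P\le\bar\lambda_P$ is not justified by either argument.

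The repair is short and is exactly what the paper does: no comparison between $L(u,v)$ and $\lambda_P(u,v)$ away from $(\bar u,\bar v)$ is needed. The eigenvalue perturbation identities $\partial_u\lambda_P=\phi FX$ and $\partial_v\lambda_P=\phi GX$ (obtained by differentiating $\lambda_P(u,v)=\phi(uF+vG)X$ and using $\phi X=1$) show that the affine function $L$ has $L(\bar u,\bar v)=\bar\lambda_P$ and $\nabla L=\nabla\lambda_P(\bar u,\bar v)$. Since $(\bar u,\bar v)$ maximizes $\lambda_P$ over the convex set $Conv(\Omega)$, the first-order necessary condition gives
\begin{equation*}
\left(\begin{array}{l}u-\bar u\\ v-\bar v\end{array}\right)\cdot\nabla\lambda_P(\bar u,\bar v)\le 0,\qquad \forall (u,v)\in Conv(\Omega),
\end{equation*}
which holds with no convexity or concavity assumption on $\lambda_P$, and hence $L(u,v)=\bar\lambda_P+(u-\bar u,v-\bar v)\cdot\nabla\lambda_P(\bar u,\bar v)\le\bar\lambda_P$. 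You already have all the ingredients (linearity of $L$ and the gradient identification); you only need to replace the supporting-hyperplane step by this first-order optimality condition.
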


\begin{proof}[Proof of Proposition~\ref{prop:exsolPerron}]

Without loss of generality, we may assume that $R=S=1$.  From \eqref{defHamiltonien}, \eqref{defX}, \eqref{defphi} and \eqref{eq:opt_cst_tra}, we obtain, for every $t\in [0,T]$,
\begin{equation}
\label{Hbarubarv=}
H(\bar x(t),\bar p(t), \bar u,\bar v)=\lb_P(\bar u,\bar v).
\end{equation}
For any $(u,v)\in Conv(\Omega)$, we have, for every $t\in [0,T]$,
\begin{align}\label{barH}
H(\bar x(t),\bar p(t),u,v)=&\bar p(t)(uF+vG)\bar x(t)\nonumber\\
=& \bar p(t)(\bar u F+\bar v G)\bar x(t)+\bar p(t)\bigl((u-\bar u) F+(v-\bar v) G\bigr)\bar x(t)\nonumber\\
=&H(\bar x(t),\bar p(t),\bar u,\bar v) +\bar\phi\bigl((u-\bar u) F+(v-\bar v) G\bigr)\bar X.
\end{align}

Testing \eqref{defX} against the adjoint eigenvector $\phi$ and using the normalization $\phi X=1$ (see
\eqref{defphi}), we obtain
\begin{equation}\label{lambdaPuv}
\lambda_P(u,v)=\phi(uF+vG)X.
\end{equation}
Differentiating \eqref{lambdaPuv} with respect to $u$ and using \eqref{defX} together with \eqref{defphi}, we get
\begin{align}
\frac{\partial \lambda_P}{\partial u}&=\frac{\partial \phi}{\partial u}(uF+vG)X+\phi F X+
\phi (uF+vG)\frac{\partial X}{\partial u}\nonumber \\
&=\lambda_P(u,v)\frac{\partial \phi}{\partial u} X+\phi F X+\lambda_P(u,v)\phi \frac{\partial X}{\partial u}\nonumber\\
&=\lambda_P(u,v)\frac{\partial (\phi X)}{\partial u} +\phi F X\nonumber\\
&=\phi FX. \label{lambdau=}
\end{align}
We obtain in the same way that
\begin{equation}
\label{lambdav=}
\frac{\partial \lambda_P}{\partial v}=\phi GX.
\end{equation}
 From \eqref{Hbarubarv=}, \eqref{barH}, \eqref{lambdau=} and \eqref{lambdav=}, we obtain
\begin{equation}
\label{Huv=}
H(\bar x(t),\bar p(t),u,v)=\lb_P(\bar u,\bar v)+\left(\begin{array}{l}u-\bar u\\ v-\bar v\end{array}\right)\cdot\nabla\lb_P(\bar u,\bar v).
\end{equation}
Moreover, since $Conv(\Omega)$ is convex and $\lambda_P$ is maximal in $Conv(\Omega)$ at $(\bar u, \bar v)$,
\begin{equation}
\label{gradientuv}
\left(\begin{array}{l}u-\bar u\\v-\bar v\end{array}\right)\cdot\nabla\lb_P(\bar u,\bar v)\leq 0_, \qquad
\forall (u,v)\in Conv(\Omega).
\end{equation}
 From \eqref{Huv=} and \eqref{gradientuv}, we get that $(\bar u,\bar v)$ satisfies the maximality condition \eqref{maximality-condition}.

The proof is complete.
\end{proof}

\begin{remark} We easily check from \eqref{defX} and \eqref{defphi} that the trajectories~\eqref{eq:opt_cst_tra} are solutions to the direct and adjoint equations \eqref{eq:matrixformuv} and \eqref{eq:adjoint} with the constant control $(\bar u, \bar v).$ For these trajectories to satisfy additionally the initial condition in \eqref{eq:matrixformuv} and the terminal condition \eqref{eq:trans}, the initial distribution $x^0>0$ has to be taken collinear to $\bar X>0$ and the objective we want to maximize has to be modified by replacing in~\eqref{eq:cost2} the vector $\psi$ by a positive vector and collinear to $\bar\phi$.
\end{remark}

All the results of this section give indications that the optimal relaxed controls do not lie on $\Omega$, which would explain the oscillations that we observed numerically in Figure~\ref{fig:r_convex}.
In the next section, we precise these indications in the case of a two-compartment model.

\section{Dimension $n=2$}\label{secn=2}
As in the previous sections,  $u_{\min}$ and $u_{\max}$ are two positive real numbers such that $u_{\min}<u_{\max}$ and
$r\in\mathcal C^2([u_{\min},u_{\max}])$. We still assume that \eqref{rpositive} and \eqref{rsecondpositive} hold. However,
we no longer assume that \eqref{as:r3} holds. We precise what has been done in the two previous sections in the two dimensional case.

First we give the form of the matrices $F$ and $G$ in dimension 2 (see \eqref{defG}-\eqref{as:kappadiscret}):
\[
F=\left(
\begin{array}{cc}
 0 & 2\beta \\
 0 & -\beta
\end{array}
\right),\quad
G=
\left(
\begin{array}{cc}
 -\tau & 0\\
 \tau & 0
\end{array}
\right),
\]
with $\beta>0$ and $\tau>0$. Notice that, for the sake of clarity, we have skipped the indices of the coefficients:
the coefficient $\beta$ stands for $\beta_2$ and $\tau$ stands for $\tau_1$.

\

In dimension 2, the optimal control still lies on $\Sigma$ even if \eqref{as:r3} is no longer assumed to hold.
This is a consequence of the following lemma.
\begin{lemma}\label{lm:qpositivity}
For any control $(u,v)\in (L^\infty((0,T);(0,+\infty)))^2$, the solution $p=(p_1,p_2)$ to the adjoint equation
\[\dot p=-p(uF+vG),\qquad p(T)=\psi=(1,2),\]
satisfies
\beq\label{eq:pGpF}
(2p_1-p_2)(t)> 0\quad\text{and}\quad (p_2-p_1)(t)> 0,\quad \forall\, t\in[0,T).
\eeq
\end{lemma}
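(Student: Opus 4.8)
The plan is to write the adjoint equation explicitly in dimension $2$ and transform it into a linear ODE system for the two quantities $q_1 := 2p_1 - p_2$ and $q_2 := p_2 - p_1$ (which are exactly $\psi G$-type and $\psi$-collinearity defects we care about), and then run a backward-in-time positivity argument. Using the explicit forms of $F$ and $G$ given just above the lemma, one has $pF = (0, 2\beta p_1 - \beta p_2) = (0, \beta q_1)$ and $pG = (-\tau p_1 + \tau p_2, 0) = (\tau q_2, 0)$, so the adjoint equation $\dot p = -p(uF + vG)$ reads componentwise
\begin{equation*}
\dot p_1 = -v\tau q_2, \qquad \dot p_2 = -u\beta q_1.
\end{equation*}
From this I would compute $\dot q_1 = 2\dot p_1 - \dot p_2 = u\beta q_1 - 2v\tau q_2$ and $\dot q_2 = \dot p_2 - \dot p_1 = -u\beta q_1 + v\tau q_2$, with terminal data $q_1(T) = 2\cdot 1 - 2 = 0$ and $q_2(T) = 2 - 1 = 1$. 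So at $t = T$ we have $q_1(T)=0$ and $q_2(T)=1>0$; note $q_1$ is not strictly positive at the endpoint, which is why the claim is stated on $[0,T)$ rather than $[0,T]$.

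Next I would analyze the sign of $q_1, q_2$ just below $T$ and then globally. At $t=T$: $\dot q_1(T) = u(T)\beta\cdot 0 - 2v(T)\tau\cdot 1 = -2v(T)\tau < 0$ since $v(T)>0$ (here one uses a Lebesgue point / right-limit version, or simply integrates: $q_1(t) = \int_t^T 2v\tau q_2\,ds - \int_t^T u\beta q_1\,ds$). Because $q_1(T)=0$ and $\dot q_1 < 0$ near $T$ (backward in time $q_1$ increases), $q_1 > 0$ on an interval $(T-\eta, T)$; and $q_2$ stays near $1 > 0$ there by continuity. To propagate this to all of $[0,T)$, I would argue by contradiction: let $t^* := \inf\{t < T : q_1 > 0 \text{ and } q_2 > 0 \text{ on } (t,T)\}$ and suppose $t^* > 0$. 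By continuity, at $t = t^*$ either $q_1(t^*) = 0$ or $q_2(t^*) = 0$ (and both are $\geq 0$). If $q_2(t^*) = 0$: from $\dot q_2 = -u\beta q_1 + v\tau q_2$ and $q_1(t^*)>0$ (it can't also vanish unless both do — handle that sub-case) we get $\dot q_2(t^*) = -u(t^*)\beta q_1(t^*) < 0$, so going backward from $t^*$ the function $q_2$ would have been $> 0$ and increasing, i.e. $q_2 > 0$ on $(t^*, T)$ is consistent but $q_2$ would be \emph{negative} just to the right — contradiction with $q_2>0$ on $(t^*,T)$... so actually this forces $q_2 > 0$ in a left-neighborhood too. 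The cleaner route: show $q_1 > 0$ on all of $[0,T)$ first using the integral representation and a Grönwall-type bound on how fast $q_2$ can become negative, then feed $q_1 > 0$ into $\dot q_2 = -u\beta q_1 + v\tau q_2$ together with $q_2(T) = 1$ to get $q_2 > 0$ backward via the integrating factor $\exp(-\int_t^T v\tau\,ds)$, which gives $q_2(t) = \exp(\int_t^T v\tau\,ds) + \int_t^T \exp(\int_t^s v\tau\,d\sigma) u\beta q_1(s)\,ds > 0$ immediately once $q_1 \geq 0$ is known.

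So the real work is establishing $q_1 > 0$ on $[0,T)$. For this I would use the coupled structure: set $w := q_1 + 2q_2$; then $\dot w = \dot q_1 + 2\dot q_2 = (u\beta q_1 - 2v\tau q_2) + 2(-u\beta q_1 + v\tau q_2) = -u\beta q_1 = -u\beta(w - 2q_2)$, which is not obviously closed either. An alternative and probably the intended trick: observe that $q_1 = 2p_1 - p_2$ and note $\psi F$, $\psi G$ structure — or directly, rewrite the $q$-system as $\dot q_1 = u\beta q_1 - 2v\tau q_2$, $\dot q_2 = -u\beta q_1 + v\tau q_2$ and notice the matrix $\begin{pmatrix} u\beta & -2v\tau \\ -u\beta & v\tau\end{pmatrix}$ has nonpositive off-diagonal entries, so $-$(this matrix) is a Metzler-type generator going backward; hence the backward flow of the \emph{transpose} system preserves the positive cone, and since the terminal vector $(0,1)$ lies on the boundary of the positive cone with the inward-pointing velocity computed above ($\dot q_1(T) < 0$ means $q_1$ enters the interior going backward), the trajectory stays strictly inside $\{q_1>0, q_2>0\}$ for $t<T$. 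I expect the main obstacle to be making the "boundary of the cone with inward velocity" argument rigorous at the almost-everywhere level for merely $L^\infty$ controls — the honest fix is to use the integral (mild) formulation throughout and a continuity/first-exit-time argument rather than pointwise derivatives, which is routine but needs care; alternatively one can reduce to piecewise-constant controls by the density in \eqref{weakconvergence} and pass to the limit, since the strict inequalities on compact subintervals of $[0,T)$ are stable.
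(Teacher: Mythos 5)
Your proposal is correct and is essentially the paper's own proof: the paper likewise passes to $\tilde p=(p_2-p_1,\,2p_1-p_2)$, writes the resulting linear system $\dot{\tilde p}=-\begin{pmatrix}-v\tau&u\beta\\2v\tau&-u\beta\end{pmatrix}\tilde p$ with terminal datum $(1,0)^T$, and concludes by exactly the backward-in-time cone-invariance (quasi-positivity/Metzler) argument you identify at the end, the strict entry into the open orthant coming from the positive off-diagonal coupling. One minor slip that does not affect the conclusion: the exponents in your integrating-factor formula for $q_2$ should be $\exp\bigl(-\int_t^T v\tau\,ds\bigr)$ and $\exp\bigl(-\int_t^s v\tau\,d\sigma\bigr)$, and it is slightly cleaner to first get nonnegativity of both components (mild formulation/Picard iteration), then strict positivity of $p_2-p_1$ from its own equation, and finally strict positivity of $2p_1-p_2$ for $t<T$ from the coupling term.
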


\begin{proof}[Proof of Lemma~\ref{lm:qpositivity}]
Denote by $\tilde p$ the vector
\[\tilde p:=\begin{pmatrix}\tilde p_1\\\tilde p_2\end{pmatrix}:=\begin{pmatrix}p_2-p_1\\2p_1-p_2\end{pmatrix}.\]
It satisfies the equation
\[\dot{\tilde p}=-\begin{pmatrix}-v\tau&u\beta\\2v\tau&-u\beta\end{pmatrix}\tilde p,\qquad \tilde p(T)=\begin{pmatrix}1\\0\end{pmatrix},\]
the result of the lemma follows.
\end{proof}

\subsection{The case $r(u_{\min})\leq r(u_{\max})$}

As a consequence of Lemma~\ref{lm:qpositivity}, we can solve very simply the optimal control problem in the case when $r(u_{\min})\leq r(u_{\max})$.

\begin{corollary}\label{co:theta>0}
If $r$ is such that $r(u_{\min})\leq r(u_{\max})$, then the optimal control is, for almost every $t\in [0,T]$, \[(u^*,v^*)\equiv(u_{\max},r(u_{\max})).\]
\end{corollary}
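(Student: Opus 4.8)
The plan is to use the Pontryagin Maximum Principle (Theorem~\ref{th:PMP}) together with the sign information from Lemma~\ref{lm:qpositivity}. Let $(u^*,v^*)$ be an optimal control for \eqref{eq:cost2} with trajectory $x^*$ and adjoint $p^*$. The Hamiltonian is $H(x,p,u,v)=p(uF+vG)x$, which is affine in $(u,v)$; in the $n=2$ case one computes directly from the explicit forms of $F$ and $G$ that
\[pFx=\beta\,(2p_1-p_2)\,x_2,\qquad pGx=\tau\,(p_2-p_1)\,x_1.\]
By Lemma~\ref{lm:qpositivity}, $(2p_1^*-p_2^*)(t)>0$ and $(p_2^*-p_1^*)(t)>0$ for all $t\in[0,T)$, and by \eqref{xandp>0} we have $x^*(t)>0$. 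Hence the coefficients of $u$ and of $v$ in $H(x^*(t),p^*(t),u,v)$ are both strictly positive for a.e.\ $t\in[0,T)$. Therefore maximizing $H$ over $Conv(\Omega)$ at such a time amounts to maximizing a linear functional with strictly positive coefficients over the compact convex set $Conv(\Omega)$: the maximum is attained on the ``upper right'' part of the boundary, and in particular at a point of $Conv(\Omega)$ where both coordinates cannot be increased — which by Lemma~\ref{lm:convex} forces $(u^*(t),v^*(t))\in\Sigma$. This recovers Corollary~\ref{corollarySigma} in this setting (without assuming \eqref{as:r3}).

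The key extra input when $r(u_{\min})\leq r(u_{\max})$ is that the string $\Sigma$ has nonnegative slope, i.e.\ $\theta=\dfrac{r(u_{\max})-r(u_{\min})}{u_{\max}-u_{\min}}\geq 0$. So along $\Sigma$, writing $v=\sigma(u)=\theta u+\zeta$ with $\theta\geq0$, both $u$ and $v=\sigma(u)$ are nondecreasing functions of $u\in[u_{\min},u_{\max}]$. Since the coefficients of $u$ and $v$ in the Hamiltonian are both strictly positive, the map $u\mapsto H(x^*(t),p^*(t),u,\sigma(u))$ is strictly increasing on $[u_{\min},u_{\max}]$, so its maximum over $\Sigma$ is attained at the single endpoint $u=u_{\max}$, i.e.\ at $(u_{\max},r(u_{\max}))$. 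Combining with the previous paragraph, for a.e.\ $t\in[0,T)$ the maximality condition \eqref{eq:maximality} has the unique maximizer $(u_{\max},r(u_{\max}))$ over all of $Conv(\Omega)$, whence $(u^*(t),v^*(t))=(u_{\max},r(u_{\max}))$ for a.e.\ $t\in[0,T]$.

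I expect the main (minor) obstacle to be bookkeeping at the boundary: Lemma~\ref{lm:qpositivity} only gives strict positivity on $[0,T)$, and one should note that $t=T$ is a null set so it does not affect the ``almost every $t$'' conclusion; also one must be slightly careful that the argument showing the maximizer lies on $\Sigma$ does not secretly require \eqref{as:r3} — but the direct computation $pFx=\beta(2p_1-p_2)x_2$, $pGx=\tau(p_2-p_1)x_1$ together with Lemma~\ref{lm:qpositivity} handles the $n=2$ case cleanly without it. A further tiny point is to confirm that when $r(u_{\min})=r(u_{\max})$ (so $\theta=0$), the coefficient of $v$ is still strictly positive, so $u\mapsto H(\cdot,\cdot,u,\zeta)$ is still strictly increasing and the maximizer is still the single point $u=u_{\max}$; the argument goes through verbatim. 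No existence issue arises since optimal controls for \eqref{eq:cost2} exist by the classical results already cited.
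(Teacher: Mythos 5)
Your proof is correct and follows essentially the paper's argument: PMP plus Lemma~\ref{lm:qpositivity} give $p^*Fx^*=\beta(2p_1^*-p_2^*)x_2^*>0$ and $p^*Gx^*=\tau(p_2^*-p_1^*)x_1^*>0$, so the Hamiltonian is affine in $(u,v)$ with strictly positive coefficients and its unique maximizer over $Conv(\Omega)$ is the corner $(u_{\max},r(u_{\max}))$; the paper reaches this in one step by noting that $u\le u_{\max}$ and $v\le r(u_{\max})$ for every $(u,v)\in Conv(\Omega)$ (by convexity of $r$ and $r(u_{\min})\le r(u_{\max})$), whereas you detour through $\Sigma$ and the slope $\theta\ge0$. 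One small caution: your appeal to Lemma~\ref{lm:convex} is both unnecessary and, strictly speaking, unavailable in Section~\ref{secn=2}, since its proof relies on \eqref{as:r3}, which is explicitly not assumed there; the reduction to $\Sigma$ already follows from $p^*Gx^*>0$ alone (as in Corollary~\ref{co:sigma}), so nothing essential is lost.
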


\begin{proof}[Proof of Corollary~\ref{co:theta>0}]
Let $x^*$  be the  trajectory corresponding to the control $(u^*,v^*)$. Let $p^*:\, [0,T]\rightarrow \R^2$ be the row vector solution of \eqref{eq:adjoint}-\eqref{eq:trans}.
When $r(u_{\min})\leq r(u_{\max})$, we have, for every $(u,v)\in Conv(\Omega)$,  $u\leq u_{\max}$ and $v\leq r(u_{\max})$.
From Lemma~\ref{lm:qpositivity}, we know that, for every time $t\in[0,T)$,
$$
p^*(t)Gx^*(t)=\tau(p^*_2(t)-p^*_1(t))x_1^*(t)>0\quad\text{and}\quad p^*(t)Fx^*(t)=\beta(2p_1^*(t)-p_2^*(t))x_2^*(t)>0.
$$
We conclude by using the maximality property \eqref{eq:maximality}.
\end{proof}

\subsection{The case $r(u_{\min})> r(u_{\max})$}

In this subsection we treat the case where $r(u_{\min})> r(u_{\max})$ which is more biologically relevant than the case $r(u_{\min})\leq r(u_{\max}),$ but also more delicate.
We start from a corollary of Lemma~\ref{lm:qpositivity} which ensures that the optimal control lies on the boundary $\Sigma$ of $Conv(\Omega).$

\begin{corollary}\label{co:sigma}
Let $(u^*,v^*)$ be an optimal control for problem \eqref{eq:cost2}. Then $(u^*(t),v^*(t))$
lies on $\Sigma$ for almost every $t\in[0,T]$.
\end{corollary}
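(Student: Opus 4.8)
The plan is to mimic the strategy of Corollary~\ref{corollarySigma}, but replacing the role of Lemma~\ref{lm:convex} (which required \eqref{as:r3}) by the sign information supplied by Lemma~\ref{lm:qpositivity}, which holds unconditionally in dimension $2$. First I would observe that, by the maximality condition \eqref{eq:maximality}, for almost every $t\in[0,T]$ the pair $(u^*(t),v^*(t))$ maximizes the linear functional $(u,v)\mapsto H(x^*(t),p^*(t),u,v)=u\,p^*(t)Fx^*(t)+v\,p^*(t)Gx^*(t)$ over the compact convex set $Conv(\Omega)$. Hence it is enough to show that the gradient of this linear functional in $(u,v)$, namely the vector $\bigl(p^*(t)Fx^*(t),\,p^*(t)Gx^*(t)\bigr)$, has both components strictly positive for every $t\in[0,T)$; for then the maximum of a linear functional with strictly positive coefficients over $Conv(\Omega)$ is attained only on the "upper-right" portion of the boundary, which (since $r$ is convex and $r(u_{\min})>r(u_{\max})$) is exactly the string $\Sigma$.

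The second step is precisely to establish this positivity. Using the explicit $2\times2$ forms of $F$ and $G$ given just above, one computes
\[
p^*(t)Fx^*(t)=\beta\bigl(2p_1^*(t)-p_2^*(t)\bigr)x_2^*(t),\qquad
p^*(t)Gx^*(t)=\tau\bigl(p_2^*(t)-p_1^*(t)\bigr)x_1^*(t),
\]
exactly as in the proof of Corollary~\ref{co:theta>0}. By \eqref{xandp>0} we have $x^*(t)>0$, and by Lemma~\ref{lm:qpositivity} we have $2p_1^*(t)-p_2^*(t)>0$ and $p_2^*(t)-p_1^*(t)>0$ for all $t\in[0,T)$; since $\beta,\tau>0$, both displayed quantities are strictly positive on $[0,T)$.

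The third step is the geometric conclusion. For a fixed $t\in[0,T)$ the coefficients $a:=p^*(t)Fx^*(t)>0$ and $b:=p^*(t)Gx^*(t)>0$ are strictly positive, so any maximizer of $(u,v)\mapsto au+bv$ on $Conv(\Omega)$ must lie on $\partial Conv(\Omega)$; moreover it cannot lie on the "lower" arc $\Omega\setminus\Sigma$, because from any point $(u,r(u))$ with $u\in(u_{\min},u_{\max})$ one can strictly increase both $u$ and $v$ while staying in $Conv(\Omega)$ — indeed the point $(u_{\max},r(u_{\max}))$ has larger first coordinate, and when $r(u_{\min})>r(u_{\max})$ the string $\Sigma$ lies strictly above $\Omega$ on the open interval, so a small move toward $(u_{\max},r(u_{\max}))$ along the horizontal increases $u$ and keeps $v=r(u)$ below $\Sigma$; alternatively one may simply invoke that $\partial Conv(\Omega)=\Omega\cup\Sigma$ and that the outward normal cone to $Conv(\Omega)$ along the relative interior of $\Omega$ contains no vector with both components positive (both "incoming" components point into $Conv(\Omega)$), exactly as in Figure~\ref{fig:convexhull}. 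Hence the maximizer lies on $\Sigma$, and this holds for almost every $t\in[0,T]$, which is the claim.

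The main obstacle is purely the bookkeeping of the last, geometric step: one must be careful that in the regime $r(u_{\min})>r(u_{\max})$ the set $\Sigma$ really is the part of the boundary exposed by positive linear functionals, and to phrase this cleanly without re-proving a two-dimensional convex-geometry lemma. The analytic heart — the sign conditions \eqref{eq:pGpF} — is already packaged in Lemma~\ref{lm:qpositivity}, so no new estimate is needed; everything else is a transcription of the argument already used for Corollary~\ref{co:theta>0}, now keeping track of which boundary arc is selected rather than a single vertex.
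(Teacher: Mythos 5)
Your proof is correct and follows essentially the same route as the paper: Lemma~\ref{lm:qpositivity} supplies the sign of the coefficients of the Hamiltonian, which is linear in $(u,v)$, and the maximality condition \eqref{eq:maximality} then forces the control onto the upper boundary $\Sigma$ of $Conv(\Omega)$. The paper's version is leaner in that it only needs $p^*(t)Gx^*(t)>0$ together with $v\leq\sigma(u)$ on $Conv(\Omega)$ (for fixed $u$ one simply raises $v$ to $\sigma(u)$), so the positivity of $p^*(t)Fx^*(t)$ and your geometric discussion are superfluous --- and note that your ``horizontal move'' variant is actually unsound where $r$ is increasing, though your normal-cone alternative is fine.
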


\begin{proof}[Proof of Corollary~\ref{co:sigma}]
We use the same notations as in the proof of Corollary~\ref{co:theta>0}. 
Since $\Sigma$ is a string of the convex (see \eqref{rsecondpositive}) function $r$, we have that $v\leq\sigma(u)$ for every $(u,v)\in Conv(\Omega)$. From Lemma~\ref{lm:qpositivity}, we deduce that, for every time $t\in[0,T)$,
$$
p^*(t)Gx^*(t)=\tau(p_2^*(t)-p_1^*(t))x_1^*(t)>0.
$$
Then, using the maximality property \eqref{eq:maximality}, we conclude  that $v^*(t)=\sigma(u^*(t))$ for almost every $t\in [0,T]$.
\end{proof}

Using Corollary~\ref{co:sigma}, we can reduce the optimal control problem \eqref{eq:cost2} to the control set $\Sigma$. Then $v=\sigma(u)=\theta u+\zeta$
with $\theta$ defined in \eqref{deftheta} and $\zeta$ defined in \eqref{defzeta}.
Notice that the assumption $r(u_{\max})<r(u_{\min})$ ensures that
\[\theta<0 \quad\text{ and }\quad \zeta>0.\]
The dynamic equation \eqref{eq:matrixformuv} and the adjoint equation \eqref{eq:adjoint} become respectively
\begin{gather}
\label{eq:dynx}
\dot x=u(F+\theta G)x+\zeta Gx,
\\
\label{eq:dynp}
\dot p=-up(F+\theta G)-\zeta pG,
\end{gather}
and the relaxed optimal control problem~\eqref{eq:cost2} is replaced by
\beq
\label{eq:cost3}
\text{maximize } J(u)=\psi x(T),\, u:[0,T]\rightarrow [u_{\min},u_{\max}]  \text{ is a Lebesgue measurable function},
\eeq
subject to dynamics~\eqref{eq:dynx} with $x(0)=x^0>0.$
Call
\[H_\Sigma(x,p,u)=up(F+\theta G)x+\zeta pGx\]
the Hamiltonian for dynamics~\eqref{eq:dynx}-\eqref{eq:dynp} and define by
\[\Phi(x,p):=\f{\p H_\Sigma(x,p,u)}{\p u}=p(F+\theta G)x\]
the switching function.
For $(x,p)$ solution to~\eqref{eq:dynx}-\eqref{eq:dynp}, we also call switching function the quantity
\beq\label{def:Phi}\Phi(t):=\Phi(x(t),p(t)).\eeq
The maximum condition of the PMP writes for problem~\eqref{eq:cost3}
\beq\label{eq:maximality2}H_\Sigma(x^*(t),p^*(t),u^*(t))=\max_{u\in[u_{\min},u_{\max}]} H_\Sigma(x^*(t),p^*(t),u)\eeq
for almost every $t\in[0,T],$
and it is verified for $(x^*,p^*,u^*)$ if and only if $\Phi^*(t)=\Phi(x^*(t),p^*(t))$ satisfies
\begin{equation}\label{uaf}
\Phi^*(t)=0\quad\text{or}\quad\left\{\begin{array}{ll}\Phi^*(t)>0&\text{when}\ u^*(t)=u_{\max},\\
\Phi^*(t)<0&\text{when}\ u^*(t)=u_{\min},
\end{array}\right.
\end{equation}
for almost every $t\in[0,T].$
First we look for singular trajectories on open intervals $I$, {\it i.e.,} $(x,p,u)$ with $x\in \mathcal{C}^0(I;(0,+\infty)^2)$, $p^T\in \mathcal{C}^0(I;(0,+\infty)^2)$, $u\in L^\infty(I;[u_{\min},u_{\max}])$ solutions of \eqref{eq:dynx}-\eqref{eq:dynp}
such that
\[\Phi(t)=0,\qquad \text{for almost every }t\in I.\]

\begin{theorem}\label{th:singular}
 For a nonempty open interval $I$, $t\in I\mapsto (x(t),p(t),u(t))$ is a singular trajectory if and only if
\begin{gather}
\label{eq:using}u(t)= u_{\mathrm{sing}}:=\frac{\zeta\tau}{\sqrt{-2\theta\tau\beta}}\,
\frac{2\beta+\sqrt{-2\theta\tau\beta}}{\beta+2\sqrt{-2\theta\tau\beta}-\theta\tau},\qquad  \text{for almost every }t\in I,
\end{gather}
and there exist  two positive real numbers $R$ and $S$ such that
\begin{gather}
\label{expressionxp}
x(t)=RXe^{\lb t}\qquad\text{and}\qquad p(t)=S\phi\, e^{-\lb t}, \qquad \forall t\in I,
\end{gather}
where
\begin{itemize}
\item $\lb$  is the Perron eigenvalue of the matrix  $u_\mathrm{sing}(F+\theta G)+\zeta G$ and
\begin{gather}
\label{valuelambdasing}
\lb=\f{\zeta\tau\beta}{\beta+2\sqrt{-2\theta\tau\beta}-\theta\tau},
\end{gather}
\item $X$ and $\phi$ are respectively direct and adjoint positive eigenvectors of the matrix $u_\mathrm{sing}(F+\theta G)+\zeta G$ associated to the Perron eigenvalue $\lb$.
\end{itemize}
\end{theorem}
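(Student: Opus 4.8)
The plan is to characterize singular trajectories by exploiting the constraint $\Phi\equiv 0$ on $I$ together with the dynamics \eqref{eq:dynx}--\eqref{eq:dynp}. Since $n=2$, everything is explicit: write $F+\theta G$ and $G$ as the $2\times 2$ matrices obtained from the formulas for $F$ and $G$ at the start of Section~\ref{secn=2}, and recall from Lemma~\ref{lm:qpositivity} that $p_2-p_1>0$ and $2p_1-p_2>0$ on $[0,T)$, so that $x,p$ stay in the open positive quadrant. The switching function is $\Phi(t)=p(t)(F+\theta G)x(t)$. First I would differentiate $\Phi$ along the flow: using \eqref{eq:dynx}--\eqref{eq:dynp} the terms involving $u(F+\theta G)$ cancel (as in the usual computation $\dot\Phi = p[\,\zeta G,\ F+\theta G\,]x$ after the $u$-terms drop out), so $\dot\Phi = \zeta\, p\,[G,\,F+\theta G]\,x$, which is again independent of $u$. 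Setting $\Phi\equiv 0$ and $\dot\Phi\equiv 0$ gives two linear conditions on the pair $(x(t),p(t))$ at each $t\in I$.

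**Extracting $u_{\mathrm{sing}}$.** The key step is the second differentiation: $\ddot\Phi$ will contain a term linear in $u$ (the coefficient being, up to sign, the "generalized Legendre--Clebsch" quantity $p\,[\,F+\theta G,\,[G,\,F+\theta G]\,]\,x$ type bracket, all $2\times 2$), together with a $u$-independent part. Imposing $\ddot\Phi\equiv 0$ and solving for $u$ yields a formula for $u_{\mathrm{sing}}$ in terms of $x(t),p(t)$; one then uses the two algebraic relations $\Phi=0$, $\dot\Phi=0$ to eliminate the state/adjoint ratios and obtain the constant value \eqref{eq:using}. In $2\times 2$ this amounts to: from $\Phi=0$ solve for the ratio $p_1/p_2$ (equivalently $x_1/x_2$) in terms of the parameters, plug into $\dot\Phi=0$ to get a second ratio, and the compatibility of these with $\ddot\Phi=0$ pins down $u$. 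The arithmetic produces $\sqrt{-2\theta\tau\beta}$ (note $\theta<0$ so the radicand is positive), giving exactly \eqref{eq:using}. Conversely, if $u\equiv u_{\mathrm{sing}}$ one checks $\Phi\equiv 0$ is consistent, i.e., the constant control $u_{\mathrm{sing}}$ does produce a singular arc.

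**Identifying the trajectory as a Perron eigen-solution.** Once $u\equiv u_{\mathrm{sing}}$ is fixed, the system \eqref{eq:dynx}--\eqref{eq:dynp} becomes linear autonomous with matrix $M:=u_{\mathrm{sing}}(F+\theta G)+\zeta G$, which has nonnegative off-diagonal entries and is irreducible (here $\zeta>0$, $\tau>0$, $\beta>0$), so Perron--Frobenius applies and $M$ has a simple dominant eigenvalue $\lb$ with positive right eigenvector $X$ and positive left eigenvector $\phi$. Since $x$ and $p$ must remain in the open positive quadrant for all $t\in I$ (by Lemma~\ref{lm:qpositivity}) and satisfy $\dot x = Mx$, $\dot p = -pM$, and moreover the single scalar constraint $\Phi(x,p)=p(F+\theta G)x=0$ must hold identically, a dimension count shows $x(t)$ is forced to be collinear to $X$ and $p(t)^T$ collinear to $\phi^T$: any component along the other (subdominant) eigendirection would, combined with positivity over a whole interval, violate either positivity or the constraint. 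This gives \eqref{expressionxp} with $R,S>0$. Finally, to get \eqref{valuelambdasing} I would either compute the dominant eigenvalue of $M$ directly (a $2\times 2$ characteristic polynomial) or, more cleanly, multiply $\lb X = MX$ on the left by $\psi=(1,2)$ and use $\psi F=0$ (so $\psi(F+\theta G)=\theta\psi G$) to get a scalar identity relating $\lb$, $u_{\mathrm{sing}}$, $\theta$, $\zeta$ and the ratio $X_1/(\psi X)$; combined with the eigenvector relation this yields \eqref{valuelambdasing}.

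**Main obstacle.** The delicate point is the rigorous converse direction and the collinearity argument: showing that $\Phi\equiv 0$ on an \emph{interval} (not just at a point), together with staying in the positive cone, forces $u\equiv u_{\mathrm{sing}}$ and forces $x,p$ onto the Perron rays — in particular ruling out the degenerate possibility that the coefficient of $u$ in $\ddot\Phi$ vanishes identically along the candidate arc. I expect this to reduce, in the $2\times 2$ setting, to checking that this coefficient is a nonzero constant (a short explicit computation in $\tau,\beta,\theta$), after which the eliminations above are forced and the characterization is complete. The rest is bookkeeping with $2\times 2$ matrices and the sign conditions $\theta<0<\zeta$.
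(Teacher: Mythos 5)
Your proposal follows essentially the same route as the paper: differentiate the switching function twice along the flow (the $u$-terms cancel in $\dot\Phi$, leaving Lie brackets), use $\Phi=0$ and $\dot\Phi=0$ to pin down the ratios $x_1/x_2$ and $p_1/p_2$, solve $\ddot\Phi=0$ for the constant $u_{\mathrm{sing}}$ after checking the coefficient of $u$ cannot vanish by positivity, and identify the resulting trajectory with the Perron eigenvectors of $u_{\mathrm{sing}}(F+\theta G)+\zeta G$. The paper makes the collinearity step concrete by observing that \eqref{eq:phi=0}--\eqref{eq:dphi=0}, viewed as singular linear systems in $(x_1,x_2)$ and in $(p_1,p_2)$, force vanishing determinants and hence the explicit eigenvector ratios, which is cleaner than your ``dimension count,'' but the substance is the same.
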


\begin{proof}[Proof of Theorem~\ref{th:singular}]
Let us first remark that $u_{\mathrm{sing}}$ defined by \eqref{eq:using} satisfies
\[u_\mathrm{sing}=\f\zeta{-\theta}
\f{\sqrt{-2\theta\tau\beta}-\theta\tau}{\beta+2\sqrt{-2\theta\tau\beta}-\theta\tau}
\in\Bigl(0,\f{\zeta}{-\theta}\Bigr).\]
thus, $u_\mathrm{sing}\theta+\zeta>0$ and the matrix $u_\mathrm{sing}F+(u_\mathrm{sing}\theta+\zeta) G$ satisfies the assumptions of the Perron-Frobenius theorem.

{\it First step: ``If'' part.}
Simple computations prove that
\begin{gather}
\label{defX2}
X=(2\sqrt\beta,\sqrt{-2\theta\tau})^T,\\
\label{defphi2}
\phi=(\sqrt\beta+\sqrt{-2\theta\tau},2\sqrt\beta+\sqrt{-2\theta\tau})
\end{gather}
are, respectively, right and left eigenvectors of the matrix $u_\mathrm{sing}(F+\theta G)+\zeta G$ associated to the eigenvalue $\lambda$ defined by \eqref{valuelambdasing}. Since $X>0$ and $\phi>0$ are positive, they are necessarily Perron eigenvectors of this matrix and $\lb$ is its Perron eigenvalue $\lb_P(u_\mathrm{sing})$.

Let $\bar x: I\rightarrow \R^2$, $\bar p : I\rightarrow \R^2$, $\bar u : I \rightarrow \R$ be defined by
\begin{gather}
\label{defbarxbarp}
\bar x(t):= e^{\lambda t}X,\qquad \bar p(t):= e^{-\lambda t}\phi\qquad
\text{ and }\qquad \bar u(t)= u_\mathrm{sing},\qquad \forall t\in I.
\end{gather}
As already used in the previous section, $(\bar x, \bar p, \bar u)$ are solutions of \eqref{eq:dynx}-\eqref{eq:dynp}. It remains
only to check that along this trajectory $(\bar x, \bar p, \bar u)$,  $\Phi=0$, which holds since $\Phi=\phi(F+\theta G)X=0.$
Notice that because $\phi(F+\theta G)X=\lb_P'(u_\mathrm{sing})\,\phi X$, we also get that $u_\mathrm{sing}$ is a critical point of $\lb_P$.

\

{\it Second step: ``Only if'' part.}
Suppose that $(x,p,u)$ is a singular trajectory on an open interval $I$. We have $\Phi(t)=p(t)(F+\theta G)x(t)=0$ on $I$.
This gives the relation
\beq\label{eq:phi=0}-\theta\tau x_1(p_1-p_2)+\beta x_2(2p_1-p_2)=0.\eeq
Differentiating $\Phi$ with respect to $t$ on $I$, we get
\[\dot\Phi=\zeta p[F,G]x=0,\]
where $[F,G]:=FG-GF$ is the Lie bracket of $F$ and $G$.
It provides a second identity
\beq\label{eq:dphi=0}x_1(2p_1-p_2)+2x_2(p_1-p_2)=0.\eeq
If we differentiate $\Phi$ a second time, we get
\beq
\label{eq:ddphi=0}
\ddot\Phi=
\zeta u p\bigl([\, [F,G],F\, ]+\theta [\, [F,G],G\, ]\bigr) x+\zeta^2 p[\, [F,G],G\, ]x=0.
\eeq
Using~\eqref{eq:dphi=0}, we obtain
\begin{align*}
p([\, [F,G],F\, ])x= & 4\tau\beta^2 p_1x_2,\\
p([\, [F,G],G\, ])x=& -2\tau^2\beta p_2 x_1.
 \end{align*}
 We remark that
 $$
 p\bigl([\, [F,G],F\, ]+\theta ([\, [F,G],G\, ]\bigr) x=\frac{\zeta p}{ u}[\, [F,G],G\, ]x=-\frac{2\zeta}{u}\tau^2\beta p_2 x_1
 $$ cannot vanish because $x>0$ and $p>0$.
So we can divide~\eqref{eq:ddphi=0} by this term and we get
\beq\label{eq:using1}u=\f{-\zeta p([\, [F,G],G\, ])x}{p([\, [F,G],F\, ])x+\theta p([\, [F,G],G\, ])x}=\f{\zeta \tau p_2x_1}{2\beta p_1 x_2-\theta \tau p_2 x_1}.\eeq
Consider now~\eqref{eq:phi=0}-\eqref{eq:dphi=0} as a system of equations for the unknown $(x_1,x_2)$.
Since $x$ is positive, this system must have a vanishing determinant and it gives the relation
\[\beta (2p_1-p_2)^2+2\theta \tau(p_1-p_2)^2=0.\]
Using~\eqref{eq:pGpF} in Lemma~\ref{lm:qpositivity}, we can write
\[2p_1-p_2= \sqrt{\f{-2\theta\tau}{\beta}}(p_2-p_1),\]
and finally we get
\beq\label{eq:p1p2}\f{p_1}{p_2}=\f{\sqrt{\beta}+\sqrt{-2\theta \tau}}{2\sqrt{\beta}+\sqrt{-2\theta \tau}}.\eeq
Similarly, if we consider \eqref{eq:phi=0}-\eqref{eq:dphi=0} as a system of equations for the positive unknown $(p_1,p_2)$, from the fact that the determinant vanishes, we obtain
\[\theta \tau x^2_1+2\beta x^2_2=0.\]
Since $x>0$, $\theta<0$ and $\tau >0$, we deduce that
\beq\label{eq:x1x2}x_1=\sqrt{\f{2\beta}{-\theta\tau}}x_2.\eeq
Plugging~\eqref{eq:p1p2} and~\eqref{eq:x1x2} into~\eqref{eq:using1}, we get~\eqref{eq:using}. From \eqref{defX2}, \eqref{defphi2},
\eqref{defbarxbarp}, \eqref{eq:p1p2} and \eqref{eq:x1x2},
one gets the existence of $R: I\rightarrow (0,+\infty)$ and $S: I\rightarrow (0,+\infty)$ such that
\[x=R\bar x \quad \text{ and }\quad  p=S\bar p.\]
Using the fact that $(x,p,u_\mathrm{sing})$ and $(\bar x,\bar p,u_\mathrm{sing})$ are both solutions of \eqref{eq:dynx}-\eqref{eq:dynp}, one readily gets that
$\dot R=\dot S =0$.

The proof is complete.
\end{proof}

In the proof of Theorem~\ref{th:singular}, we have pointed out a link between the singular trajectories and the critical points of the Perron eigenvalue.
In Theorem~\ref{th:perrondim2}, we prove that $u_\mathrm{sing}$ is actually the unique maximum of $\lb_P$.

\begin{theorem}\label{th:perrondim2}
The Perron eigenvalue $\lb_P(u)$ of the matrix $u(F+\theta G)+\zeta G$ is well defined on  $(0,\displaystyle\frac{\zeta}{-\theta})$ and it reaches its unique maximum at $\bar u=u_\mathrm{sing}$.
\end{theorem}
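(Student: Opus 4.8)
The plan is to write $\lambda_P(u)$ down explicitly, reduce the maximisation to a one-variable problem in a well chosen parameter, and then deduce uniqueness from the strict monotonicity of one auxiliary function. From the expressions of $F$ and $G$ recalled at the beginning of Section~\ref{secn=2},
\[
u(F+\theta G)+\zeta G=\begin{pmatrix}-a & 2b\\ a & -b\end{pmatrix},\qquad a:=\tau(\theta u+\zeta),\quad b:=\beta u.
\]
Since $\theta<0$, both extra-diagonal entries $2b$ and $a$ are positive exactly when $u\in(0,\tfrac{\zeta}{-\theta})$, so on this interval the matrix is irreducible with nonnegative extra-diagonal entries and the Perron--Frobenius theorem applies, which gives the first assertion. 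Its characteristic polynomial is $\lambda^2+(a+b)\lambda-ab$, and since the product of its roots is $-ab<0$ the Perron eigenvalue is the positive root
\[
\lambda_P(u)=\tfrac12\Bigl(-(a+b)+\sqrt{a^2+6ab+b^2}\Bigr).
\]
At the endpoints of the interval $\lambda_P$ tends to $0$ (at $u=0$ one has $b=0$, at $u=\tfrac{\zeta}{-\theta}$ one has $a=0$), while $\lambda_P>0$ inside, so $\lambda_P$ attains its maximum at an interior critical point.

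Next I would reparametrise by $\rho:=b/a=\frac{\beta u}{\tau(\theta u+\zeta)}$, a smooth increasing bijection of $(0,\tfrac{\zeta}{-\theta})$ onto $(0,+\infty)$ with $a=\frac{\tau\zeta\beta}{\beta-\tau\theta\rho}$. By homogeneity of degree one of $\lambda_P$ in $(a,b)$,
\[
\lambda_P=\frac{\tau\zeta\beta}{2}\,\frac{h(\rho)}{\beta-\tau\theta\rho},\qquad h(\rho):=-(1+\rho)+\sqrt{1+6\rho+\rho^2},
\]
so maximising $\lambda_P$ over $u$ amounts to maximising $f(\rho):=\frac{h(\rho)}{\beta-\tau\theta\rho}$ over $\rho>0$. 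A few elementary computations give $h(0)=0$, $h(\rho)\to2$ as $\rho\to+\infty$, $h'(\rho)=-1+\frac{3+\rho}{\sqrt{1+6\rho+\rho^2}}>0$, $N(\rho):=h(\rho)-\rho h'(\rho)=-1+\frac{1+3\rho}{\sqrt{1+6\rho+\rho^2}}>0$ (both positivities come from $(3+\rho)^2>1+6\rho+\rho^2$ and $(1+3\rho)^2>1+6\rho+\rho^2$), and the identity $h''(\rho)=-8\,(1+6\rho+\rho^2)^{-3/2}<0$.

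For uniqueness: since $N>0$, one has $f'(\rho)=0$ iff $G(\rho):=\frac{h'(\rho)}{N(\rho)}=\frac{-\tau\theta}{\beta}$. Differentiating, and using $N'(\rho)=-\rho\,h''(\rho)$ together with $N(\rho)+\rho h'(\rho)=h(\rho)$, one obtains
\[
G'(\rho)=\frac{h''(\rho)\,h(\rho)}{N(\rho)^2}<0,\qquad\rho>0,
\]
so $G$ is strictly decreasing from $G(0^+)=+\infty$ to $G(+\infty)=0$; hence $G(\rho)=-\tau\theta/\beta$ has a unique solution $\rho^{*}$, with $f'>0$ on $(0,\rho^{*})$ and $f'<0$ on $(\rho^{*},+\infty)$, and therefore $f$, and hence $\lambda_P$, has a unique maximum. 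To recognise this maximiser as $u_{\mathrm{sing}}$ I would invoke the proof of Theorem~\ref{th:singular}, where it was observed that along the canonical singular trajectory $\phi(F+\theta G)X=\lambda_P'(u_{\mathrm{sing}})\,\phi X=0$ with $\phi X>0$ and $u_{\mathrm{sing}}\in(0,\tfrac{\zeta}{-\theta})$, so $u_{\mathrm{sing}}$ is a critical point of $\lambda_P$ in the interval and thus coincides with the unique maximiser; the maximal value is then $\lambda$ as in \eqref{valuelambdasing}.

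The main obstacle is essentially organisational: carrying out the change of variable $u\leftrightarrow\rho$ and the homogeneity reduction without error, and above all noticing the identity $N+\rho h'=h$ (equivalently $N'=-\rho h''$), which collapses $G'$ to $h''h/N^2$ and so makes the sign of $G'$ transparent. Once that is in place the argument is short: the inequalities on $h$, the concavity $h''<0$, and the endpoint behaviour of $\lambda_P$ and of $G$ are all routine, and the identification with $u_{\mathrm{sing}}$ is free from Theorem~\ref{th:singular}.
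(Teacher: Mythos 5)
Your proof is correct, but it follows a genuinely different route from the paper's. The paper gives two arguments: the main one deduces uniqueness of the critical point of $\lambda_P$ from the uniqueness (up to scaling) of the singular trajectory established in Theorem~\ref{th:singular}, so the control-theoretic result does the analytic work; the alternative proof in Appendix~\ref{sec:altproof} computes $\lambda_P(u)=\frac12\bigl(-Au-D+\sqrt{\Delta}\bigr)$ directly and shows strict concavity in $u$ via the identity $2\Delta\Delta''-(\Delta')^2=-32D^2\beta^2<0$. You instead exploit the degree-one homogeneity of $\lambda_P$ in $(a,b)=(\tau(\theta u+\zeta),\beta u)$ to reduce to maximising $h(\rho)/(\beta-\tau\theta\rho)$ in the slope variable $\rho=b/a$, and then show the critical-point equation $h'/N=-\tau\theta/\beta$ has a unique root because $G=h'/N$ is strictly decreasing (the identity $N'=-\rho h''$, hence $G'=h''h/N^2<0$, is the key collapse, and all your sign checks on $h'$, $N$, $h''$ and the limits $G(0^+)=+\infty$, $G(+\infty)=0$ are correct). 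What your route buys is that the auxiliary function $h$ depends only on $\rho$ and not on the parameters $\beta,\tau,\theta,\zeta$, so the parameter dependence is isolated in the single constant $-\tau\theta/\beta$; what it does not give you is the strict concavity of $\lambda_P$ in $u$ itself, which the paper reuses later (Lemma~\ref{lm:lbsecond} feeds into the proof of Lemma~\ref{lm:orderYpi}). Your identification of the maximiser with $u_{\mathrm{sing}}$, via the observation in the proof of Theorem~\ref{th:singular} that $\phi(F+\theta G)X=\lambda_P'(u_{\mathrm{sing}})\,\phi X=0$, is exactly the paper's closing step.
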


\begin{proof}[Proof of Theorem~\ref{th:perrondim2}]

The function $u\mapsto \theta u+\zeta$ is positive on $(0,\frac{\zeta}{-\theta})$, so the matrix $uF+(\theta u+\zeta)G$ satisfies the hypotheses of the Perron-Frobenius theorem on this interval.
The function $u\mapsto\lb_P(u)$ is positive for $u\in(0,\frac{\zeta}{-\theta})$ (see~\eqref{lambda>0}) and tends to zero as $u$ tends to $0$ or to $\frac{\zeta}{-\theta} .$
So $\lb_P$ necessarily has a maximum on $(0,\frac{\zeta}{-\theta} )$.

Every critical point of  $\lb_P$ provides with \eqref{expressionxp} a singular trajectory.
Since Theorem~\ref{th:singular} ensures that there exists a unique singular trajectory (up to multiplicative constants for $x$ and $p$), this gives the uniqueness of the critical point of $\lb_P.$ Therefore this critical point realizes the maximum of $\lb_P$ on $(0,\frac{\zeta}{-\theta})$ and the proof is complete.

\end{proof}

\begin{remark}
The proof of Theorem~\ref{th:perrondim2} can be done by explicit computations, without using the uniqueness of the singular control.
This instructive and useful computational proof is given in Appendix~\ref{sec:altproof}.
\end{remark}

As an immediate consequence of the proofs of Theorems~\ref{th:singular} and~\ref{th:perrondim2}, we have explicit expressions of the optimal eigenelements.

\begin{corollary}\label{co:opt_perron}
The maximal Perron eigenvalue is
\[\bar\lb_P=\lb_P(\bar u)=\f{\zeta\tau\beta}{\beta+2\sqrt{-2\theta\tau\beta}-\theta\tau}\]
and the associated right and left eigenvectors are given by
\[\bar X=(2\beta,\sqrt{-2\theta\tau\beta})^T\qquad\text{and}\qquad
\bar \phi=(\beta+\sqrt{-2\theta\tau\beta},2\beta+\sqrt{-2\theta\tau\beta}).\]
\end{corollary}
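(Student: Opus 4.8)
The plan is to read the corollary directly off the proofs of Theorems~\ref{th:singular} and~\ref{th:perrondim2}, so that no new computation is really needed. First, Theorem~\ref{th:perrondim2} identifies the maximizer of the Perron eigenvalue on $(0,\frac{\zeta}{-\theta})$: it is $\bar u=u_\mathrm{sing}$. Hence $\bar\lb_P=\lb_P(\bar u)=\lb_P(u_\mathrm{sing})$, and $\lb_P(u_\mathrm{sing})$ is by definition the Perron eigenvalue of the matrix $u_\mathrm{sing}(F+\theta G)+\zeta G$, which was computed in the ``if'' part of the proof of Theorem~\ref{th:singular} to be the number $\lb$ displayed in \eqref{valuelambdasing}. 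This already yields the announced formula $\bar\lb_P=\frac{\zeta\tau\beta}{\beta+2\sqrt{-2\theta\tau\beta}-\theta\tau}$.

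For the eigenvectors I would invoke \eqref{defX2}--\eqref{defphi2}: the vectors $X=(2\sqrt\beta,\sqrt{-2\theta\tau})^T$ and $\phi=(\sqrt\beta+\sqrt{-2\theta\tau},2\sqrt\beta+\sqrt{-2\theta\tau})$ were shown there to be, respectively, right and left eigenvectors of $u_\mathrm{sing}(F+\theta G)+\zeta G$ for the eigenvalue $\lb$; being strictly positive, they are necessarily Perron eigenvectors. Since a Perron eigenvector is determined only up to a positive multiplicative constant, I would simply rescale both of them by $\sqrt\beta$. Using $\sqrt\beta\cdot\sqrt{-2\theta\tau}=\sqrt{-2\theta\tau\beta}$, this turns $X$ into $\bar X=(2\beta,\sqrt{-2\theta\tau\beta})^T$ and $\phi$ into $\bar\phi=(\beta+\sqrt{-2\theta\tau\beta},2\beta+\sqrt{-2\theta\tau\beta})$, which are exactly the claimed expressions and remain positive, so they are legitimately ``the'' associated right and left Perron eigenvectors (no particular normalization being imposed in the statement).

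Alternatively, one may bypass any reference to the earlier proofs and verify the corollary by a direct $2\times2$ computation: substitute $u_\mathrm{sing}$ from \eqref{eq:using} into $u(F+\theta G)+\zeta G$, using the simplified form $u_\mathrm{sing}=\frac{\zeta}{-\theta}\,\frac{\sqrt{-2\theta\tau\beta}-\theta\tau}{\beta+2\sqrt{-2\theta\tau\beta}-\theta\tau}$ recorded in the proof of Theorem~\ref{th:singular}, and check by hand that $\bar X$ and $\bar\phi$ satisfy $(u_\mathrm{sing}(F+\theta G)+\zeta G)\bar X=\bar\lb_P\bar X$ and $\bar\phi(u_\mathrm{sing}(F+\theta G)+\zeta G)=\bar\lb_P\bar\phi$. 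I do not expect any genuine obstacle here: the result is a transcription of formulas already established, and the only points that need a word of care are the harmless rescaling of the eigenvectors by $\sqrt\beta$ and the remark that positivity is preserved under this rescaling, so that we are indeed still naming the Perron eigenelements.
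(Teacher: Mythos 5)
Your proposal is correct and matches the paper's intent exactly: the paper gives no separate proof, presenting the corollary as an immediate consequence of the proofs of Theorems~\ref{th:singular} and~\ref{th:perrondim2}, which is precisely your reading-off of \eqref{valuelambdasing} and \eqref{defX2}--\eqref{defphi2}. The only detail you add is the harmless rescaling of both eigenvectors by $\sqrt\beta$, which is the right way to reconcile the two displayed normalizations.
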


We are now ready to exhibit the unique optimal control when the horizon $T$ is large enough.
It consists mainly in the singular control $u_\mathrm{sing}$, except in the regions
close to the endpoints of $[0,T]$. For small time, the optimal control depends
on the initial data $x^0$ and it consists in reaching as fast as possible
the singular trajectory. Then the control remains constant equal to the value
which maximizes the Perron eigenvalue (see Figure~\ref{fig:relax}).
At the end of the experiment, the control is $u_{\min}$ due to the
transversality condition induced by the objective function.
This kind of strategy is known as \emph{turnpike properties}
(see~\cite{TrelatZuazua,Zaslavski} for instance), the ``turnpike'' is
the singular trajectory which corresponds to the Perron
eigenvector $X$ with an exponential growth of maximal rate $\lb_P(\bar u)$.

We divide the construction of the optimal control in two steps. First,
we build a control such that the PMP is satisfied (Theorem \ref{th:part_cont}). Then, with an analysis of the switching function $\Phi(t)$, we prove that this is the only possible one (Theorem \ref{th:uniqueness}).

\

Before stating the results, let us define projective variables.
 For $x=(x_1,x_2)^T$ and $p=(p_1,p_2)$, where $x_1$, $x_2$, $p_1$ and $p_2$ are positive real numbers, we define
\[y:=\f{x_1}{x_1+x_2}\in [0,1]\quad
  \text{ and }\quad q:=\f{p_1}{p_1+p_2}\in [0,1].\]
 For $(x(t),p(t))$ a solution to~\eqref{eq:dynx}-\eqref{eq:dynp},  the projections $y(t)$ and $q(t)$ satisfy the dynamics
\begin{gather}
\label{doty=}
\dot y = \mathcal{Y}(y,u):=2u\beta-(3u\beta+(u\theta+\zeta)\tau)y+u\beta y^2,
\\
\label{dotq=}
\dot q=\mathcal{Q}(q,u):=-\bigl[(\theta u+\zeta)\tau-(3(\theta u+\zeta)\tau-u\beta)q+(2(\theta u+\zeta)\tau-3u\beta)q^2\bigr].
\end{gather}
In our problem the initial condition $x(0)=x^0=(x^0_1,x^0_2)^T>0$ and the terminal condition $p(T)=\Psi=(1,2)>0$ guarantee that $x(t)$ and $p(t)$ are positive for all $t\in[0,T].$
So $y(t)$ and $q(t),$ which satisfy
\[y(0)=\f{x_1^0}{x_1^0+x_2^0}\qquad\text{and}\qquad \dis q(T)=\f13,\]
are well defined and belong to $(0,1)$ for every time $t\in[0,T].$
Finally for $u$ in $[u_{\text{min}},u_{\text{max}}]$, let $Y(u)\in(0,1)$ be the projection of the Perron eigenvector $X(u)$ of the matrix $uF+(\theta u+\zeta) G$ and let $\pi(u)$ be the projection of the adjoint Perron eigenvector $\phi(u)$.

\begin{theorem}\label{th:part_cont}
There exist a time $T_\psi>0$ and a function $T_0$ defined on $[0,1]$, satisfying
\[\forall y\in[0,1]\setminus\{Y(\bar u)\},\ T_0(y)>0,\qquad T_0(Y(\bar u))=0,\qquad\sup_{[0,1]}T_0<+\infty,\]
such that, for $T>T_0(y(0))+T_\psi$, the control defined by
\beq\label{eq:upart}u^*(t)=\left\{\begin{array}{ll}
	    \left\{ \begin{array}{l}
	     u_{\min}\ \text{if}\ y(0)>Y(\bar u)\\
	     u_{\max}\ \text{if}\ y(0)<Y(\bar u)
	     \end{array}\right.
              & \text{for}\ t\in[0,T_0(y(0))],
	     \vspace{1mm}\\
               \ \bar u & \text{for}\ t\in(T_0(y(0)),T-T_\psi], \\
               \ u_{\min} & \text{for}\ t\in(T-T_\psi,T],
              \end{array}\right. \eeq
satisfies the maximality property~\eqref{eq:maximality2}.
Moreover the corresponding trajectories $y^*(t)$ and $q^*(t)$ satisfy $y^* \equiv Y(\bar u)$ and $q^*\equiv \pi(\bar u)$ on $[T_0(y(0)),T-T_\psi]$.
\end{theorem}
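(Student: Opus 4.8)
The plan is to build $u^*$ and its direct and adjoint trajectories $x^*,p^*$ as a concatenation of three arcs — an initial bang arc, a singular ``turnpike'' arc on which $u^*\equiv\bar u$, and a terminal bang arc $u^*\equiv u_{\min}$ — and then to check, arc by arc, that the switching function $\Phi^*(t):=\Phi(x^*(t),p^*(t))$ obeys the sign rule \eqref{uaf}, which by the PMP is equivalent to the maximality property \eqref{eq:maximality2}. I would work throughout in the projective variables $(y,q)$ of \eqref{doty=}--\eqref{dotq=}. Since $x^*>0$ and $p^*>0$, one has $\Phi^*=(x^*_1+x^*_2)(p^*_1+p^*_2)\,g(y^*,q^*)$ with $g(y,q):=-\theta\tau\,y(2q-1)+\beta(1-y)(3q-1)$, so the whole question reduces to tracking the sign of $g(y^*,q^*)$; moreover $y^*$ is obtained by forward integration of \eqref{doty=} from $y^*(0)=x^0_1/(x^0_1+x^0_2)$ while $q^*$ is obtained by backward integration of \eqref{dotq=} from $q^*(T)=1/3$, so the direct and adjoint parts can be built independently. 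The turnpike arc is immediate: by Theorem~\ref{th:perrondim2}, $\bar u=u_{\mathrm{sing}}$ is the critical point of $\lambda_P$ with the positive eigenvectors $\bar X,\bar\phi$ of Corollary~\ref{co:opt_perron}; invariance of the ray $\R_+\bar X$ under $\bar uF+(\theta\bar u+\zeta)G$ makes $Y(\bar u)$ (resp.\ $\pi(\bar u)$) an equilibrium of \eqref{doty=} (resp.\ \eqref{dotq=}) with $u=\bar u$, and along the constant trajectory $\Phi=\bar\phi(F+\theta G)\bar X=\lambda_P'(\bar u)\,\bar\phi\bar X=0$; hence \eqref{uaf} holds there with $\Phi^*=0$.

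For the terminal arc I would integrate \eqref{dotq=} backward from $q^*(T)=1/3$ with $u\equiv u_{\min}$: as the adjoint flow runs backward, $q^*$ is monotone and tends to $\pi(u_{\min})$, and the explicit eigenelements together with $\lambda_P(u_{\min})<\bar\lambda_P$ (Theorem~\ref{th:perrondim2}) and $\theta<0$ give $1/3<\pi(\bar u)<\pi(u_{\min})$, so $q^*$ hits $\pi(\bar u)$ at a unique time, which defines $T-T_\psi$, with $T_\psi>0$ because $1/3\neq\pi(\bar u)$; the associated $y^*$ then runs forward from $y^*(T-T_\psi)=Y(\bar u)$ and decreases strictly toward $Y(u_{\min})<Y(\bar u)$. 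For the initial arc I would take $u\equiv u_{\min}$ when $y(0)>Y(\bar u)$ and $u\equiv u_{\max}$ when $y(0)<Y(\bar u)$ (and set $T_0(Y(\bar u)):=0$); using that $y\mapsto\mathcal Y(y,u)$ is a strictly convex quadratic vanishing at $Y(u)\in(0,1)$ and at $2/Y(u)>1$, and that $u\mapsto Y(u)$ is increasing (a short consequence of the homogeneity and monotonicity of $\lambda_P$), $y^*$ is strictly monotone and reaches $Y(\bar u)$ at a finite time $T_0(y(0))$, with $\sup_{[0,1]}T_0<\infty$ because $|\mathcal Y(\cdot,u_{\min})|$ (resp.\ $|\mathcal Y(\cdot,u_{\max})|$) is bounded below by a positive constant on $[Y(\bar u),1]$ (resp.\ $[0,Y(\bar u)]$) and $T_0(y(0))\to0$ as $y(0)\to Y(\bar u)$; on this arc $q^*$ runs backward from $q^*(T_0(y(0)))=\pi(\bar u)$. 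Concatenating the three arcs for $T>T_0(y(0))+T_\psi$ produces exactly the control \eqref{eq:upart}, with $y^*\equiv Y(\bar u)$ and $q^*\equiv\pi(\bar u)$ on $(T_0(y(0)),T-T_\psi]$, and the boundary conditions $y^*(0)=x^0_1/(x^0_1+x^0_2)$, $q^*(T)=1/3$ hold by construction.

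What remains, and is the delicate point, is the sign of $\Phi^*$ on the two bang arcs. At a junction with the turnpike the pair $(x^*,p^*)$ is continuous and the singular identities \eqref{eq:phi=0}--\eqref{eq:dphi=0} hold there, so $\Phi^*=0$ and $\dot\Phi^*=\zeta p^*[F,G]x^*=0$; computing $\ddot\Phi^*$ as in the proof of Theorem~\ref{th:singular} and eliminating $\zeta^2 p^*[[F,G],G]x^*$ by means of the turnpike relation $\ddot\Phi=0$ (which is how $u_{\mathrm{sing}}$ is defined) yields $\ddot\Phi^*|_{\mathrm{junction}}=\zeta\,(u_{\mathrm{bang}}-u_{\mathrm{sing}})\,A$, where $A:=p^*\bigl([[F,G],F]+\theta[[F,G],G]\bigr)x^*>0$ (the non-vanishing and positivity of $A$ being exactly what is used in the proof of Theorem~\ref{th:singular}). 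Thus $\ddot\Phi^*$ has the sign of $u_{\mathrm{bang}}-u_{\mathrm{sing}}$ at the junction, and a Taylor expansion gives, on a one-sided neighbourhood of each junction inside the adjacent bang arc, $\Phi^*<0$ when $u_{\mathrm{bang}}=u_{\min}$ and $\Phi^*>0$ when $u_{\mathrm{bang}}=u_{\max}$, which is precisely what \eqref{uaf} demands.

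Finally I would propagate this sign over the whole bang arc. On a bang arc the constant matrix $M:=u_{\mathrm{bang}}F+(\theta u_{\mathrm{bang}}+\zeta)G$ has negative determinant, hence two real eigenvalues $\lambda_P(u_{\mathrm{bang}})>0>\mu$; diagonalising $M$ shows $\Phi^*(t)=C_0+C_2e^{-\nu t}+C_3e^{\nu t}$ for some constants $C_0,C_2,C_3$, with $\nu:=\lambda_P(u_{\mathrm{bang}})-\mu>0$. This function is not identically zero (otherwise the arc would be singular and $u_{\mathrm{bang}}=u_{\mathrm{sing}}$ by Theorem~\ref{th:singular}), and it has a double zero at the junction; therefore $X\mapsto C_3X^2+C_0X+C_2$ is a nonzero quadratic with a double root at $X=e^{\nu t_{\mathrm{jct}}}$, so $\Phi^*(t)=C_3\,e^{-\nu t}\,(e^{\nu t}-e^{\nu t_{\mathrm{jct}}})^2$, which keeps a constant sign on the arc — necessarily the one identified near the junction. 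Combined with the turnpike arc, this establishes \eqref{eq:maximality2} and completes the construction. I expect the main obstacle to be this last part: the $\ddot\Phi^*$ computation at the junctions together with the structural remark that, on a bang arc, $\Phi^*$ is a combination of $1$ and $e^{\pm\nu t}$ with a forced double zero, which together rule out any wrong-sign subinterval; the rest is bookkeeping with the explicit two-dimensional formulas.
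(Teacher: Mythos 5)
Your construction of the three arcs and of $T_0$ and $T_\psi$ coincides with the paper's: both rest on the monotone convergence of $y$ and $q$ to the projected eigenvectors (Corollary~\ref{co:asymptotic}) and on the orderings $Y(u_{\min})<Y(\bar u)<Y(u_{\max})$ and $\pi(u_{\min})>\pi(\bar u)>\pi(u_{\max})>1/3$ of Lemma~\ref{lm:orderYpi} — which, be warned, are not the one-line consequences you suggest ("homogeneity and monotonicity of $\lambda_P$", "$\lambda_P(u_{\min})<\bar\lambda_P$"): the paper needs the concavity estimate $2\Delta\Delta''-(\Delta')^2<0$ to obtain the monotonicity of $u\mapsto Y(u)$ and $u\mapsto\pi(u)$, so you should invoke that lemma rather than hand-wave it. Where you genuinely diverge is the verification of the sign of $\Phi^*$ on the two bang arcs. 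The paper's mechanism is Lemma~\ref{lm:Phidot=}: $\dot\Phi=\Lambda_1\Phi+\Lambda_2$ with $\Lambda_1>0$ and $\mathrm{sign}(\Lambda_2)=\mathrm{sign}(y-Y(\bar u))$; since $y^*$ stays on one side of $Y(\bar u)$ along each bang arc and $\Phi^*$ vanishes at the junction, a short comparison (Gr\"onwall) argument fixes the sign of $\Phi^*$ on the whole arc. Your mechanism is a junction expansion — $\ddot\Phi^*=\zeta(u_{\mathrm{bang}}-u_{\mathrm{sing}})A$ with $A=p^*\bigl([\,[F,G],F\,]+\theta[\,[F,G],G\,]\bigr)x^*>0$ at the junction, which is correct (indeed $A=4\tau\beta^2p_1x_2-2\theta\tau^2\beta p_2x_1>0$ there; note the paper's own display for this quantity in the proof of Theorem~\ref{th:singular} carries a sign slip, and your $A>0$ is the right sign) — combined with the observation that on a constant-control arc $\Phi^*=C_0+C_2e^{-\nu t}+C_3e^{\nu t}$ with a forced double zero, hence $\Phi^*(t)=C_3e^{-\nu t}(e^{\nu t}-e^{\nu t_{\mathrm{jct}}})^2$ keeps a constant sign. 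Both routes are valid. The paper's has the advantage that Lemma~\ref{lm:Phidot=} and Corollary~\ref{co:signPhi} are reused verbatim in the uniqueness proof (Theorem~\ref{th:uniqueness}), where the control is not known a priori to be constant, so your exponential-trinomial rigidity would not transfer; on the other hand, your junction computation makes explicit the Legendre–Clebsch-type fact that the sign of $\Phi^*$ entering a bang arc from the turnpike is dictated by $\mathrm{sign}(u_{\mathrm{bang}}-u_{\mathrm{sing}})$, which the paper never isolates.
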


Based on Theorem~\ref{th:part_cont}, we can state our main theorem.
\begin{theorem}\label{th:uniqueness}
There exists a time $T_c>T_\psi+\displaystyle\sup_{[0,1]}T_0$ such that, for every final time $T>T_c$, the control defined by~\eqref{eq:upart} is the unique solution to the optimal relaxed control problem~\eqref{eq:cost3}.
\end{theorem}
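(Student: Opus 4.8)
The plan is to take the candidate control $u^*$ from Theorem~\ref{th:part_cont}, which already satisfies the PMP, and show it is the unique extremal once $T$ is large enough; by the existence of an optimal relaxed control (Section~\ref{sec:relax}) and the fact that any optimizer must satisfy the PMP, uniqueness of the extremal forces the optimizer to be $u^*$. So the whole game is a classical ``analysis of the switching function'' argument carried out in the projective variables $(y,q)$ introduced before Theorem~\ref{th:part_cont}. First I would observe that along any extremal, the pair $(y,q)$ solves the closed planar system \eqref{doty=}--\eqref{dotq=} with the split boundary conditions $y(0)=x_1^0/(x_1^0+x_2^0)$ and $q(T)=1/3$, and that the switching function $\Phi(t)$ can be rewritten, up to a positive scalar factor (using $x,p>0$), purely in terms of $y$, $q$ and $u$; in dimension $2$ the relation $\Phi=p(F+\theta G)x=0$ is exactly the algebraic curve $-\theta\tau\, y(1-q)\cdot\text{(stuff)} + \beta(1-y)(\ldots)=0$, i.e. a single curve $q=g(y)$ in the square $(0,1)^2$ that passes through the singular point $(Y(\bar u),\pi(\bar u))$. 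The singular arc is the unique equilibrium of the planar flow lying on that curve (this is essentially the content of Theorem~\ref{th:singular}).

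Next I would classify the possible extremals by a phase-plane / dynamical-systems argument. The idea: the singular point $(Y(\bar u),\pi(\bar u))$ is a saddle for the relevant flows, with the forward-in-$y$ stable behaviour and backward-in-$q$ behaviour controlled respectively by the two constant-control vector fields $u=u_{\min}$ and $u=u_{\max}$ (these are the only values allowed off the singular arc by \eqref{uaf}). One shows that from the initial fiber $\{y=y(0)\}$ the only way an extremal can avoid blowing out of $(0,1)^2$ — or violating the sign conditions \eqref{uaf} — in time of order $T$ is to run a bang arc ($u_{\max}$ or $u_{\min}$ according to the sign of $y(0)-Y(\bar u)$) until it hits the singular curve at the singular point, then stay on the singular arc, then leave along the $u_{\min}$ arc to meet $q(T)=1/3$. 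Concretely: on the singular curve $q=g(y)$, the constraint $\dot\Phi$-type computation forces $u=u_{\mathrm{sing}}=\bar u$, so once you reach $(Y(\bar u),\pi(\bar u))$ you are stuck there; any extremal that is bang-bang throughout (never singular) has $\Phi$ of constant sign on maximal subintervals, and a monotonicity/Gronwall estimate on $\Phi(t)$ shows such a trajectory drifts monotonically in $y$ (or $q$) and cannot satisfy both boundary conditions when $T>T_c$. A key subcomputation is that $\dot\Phi$ along a bang arc does not vanish except transversally, so switchings between $u_{\min}$ and $u_{\max}$ without an intervening singular arc are isolated and can occur only a bounded number of times — hence on a long horizon the extremal must spend a long central stretch on the singular arc.

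Then I would pin down the two transition times. The final piece $u^*\equiv u_{\min}$ on $(T-T_\psi,T]$ is forced by the terminal condition: at $t=T$, $q(T)=1/3$, and one checks by a direct sign computation (this is the $n=2$ analogue of the ``$H^*>0$'' argument in Corollary~\ref{corollarySigma}, using Lemma~\ref{lm:qpositivity}) that $\Phi(T)<0$, so $u^*(T)=u_{\min}$; continuity of $\Phi$ and the fact that $\Phi$ cannot return to $0$ after the singular arc in this regime (again by the sign of $\dot\Phi$) gives a well-defined last switching time, and its distance $T_\psi$ to $T$ depends only on $q(T)=1/3$ and $\bar u$, not on $x^0$ or $T$. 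Symmetrically, the initial arc length $T_0(y(0))$ is exactly the time the bang vector field needs to carry $y$ from $y(0)$ to $Y(\bar u)$; finiteness of $\sup_{[0,1]}T_0$ is because $Y(\bar u)\in(0,1)$ is an attracting point of both bang flows restricted to the $y$-equation on the appropriate side. Uniqueness then follows: given $T>T_c:=T_\psi+\sup_{[0,1]}T_0 + (\text{a margin ensuring the singular stretch is nonempty and the drift estimates bite})$, every PMP-extremal is forced into the three-phase form \eqref{eq:upart}, and since the optimal control is a PMP-extremal, it equals $u^*$.

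The hard part, I expect, is the \textbf{global} phase-plane argument in the middle paragraph: ruling out exotic extremals with many bang-bang switches or with several disconnected singular sub-arcs, and showing the drift estimate is uniform enough to produce a single threshold $T_c$ valid for all admissible $x^0$ simultaneously. This requires a careful study of the planar system \eqref{doty=}--\eqref{dotq=} — in particular the stable/unstable manifold structure at $(Y(\bar u),\pi(\bar u))$ and a Lyapunov-type function measuring distance to the singular arc whose monotonicity along bang arcs gives the no-return statement — rather than any single slick identity.
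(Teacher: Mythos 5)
Your overall strategy is the right one, and several ingredients match the paper's proof: the reduction to the planar system \eqref{doty=}--\eqref{dotq=} with split boundary data $y(0)$ and $q(T)=1/3$, the sign computation $\Phi^*(T)=\theta\tau x_1^*(T)<0$ forcing $u^*=u_{\min}$ near $T$, the identification of the singular arc with the fixed point $(Y(\bar u),\pi(\bar u))$, and the fact that $T_\psi$ is determined by $q(T)=1/3$ alone. However, the central step --- excluding every other extremal structure and producing a \emph{uniform} threshold $T_c$ --- is exactly the part you leave as ``a careful study,'' and the mechanism you propose for it does not suffice. Counting switchings via ``$\dot\Phi$ vanishes only transversally, hence switchings are isolated and boundedly many'' is both unproved and too weak: a bound on the number of switchings does not by itself force a long singular stretch, does not pin the structure down to the specific three-phase form \eqref{eq:upart}, and gives no control on how $T_c$ could be chosen independently of $x^0$.

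What actually closes the argument in the paper is a \emph{directional no-return property} for the switching function (Corollary~\ref{co:signPhi}), derived from the linear ODE $\dot\Phi=\Lambda_1\Phi+\Lambda_2$ with $\Lambda_1>0$ and $\mathrm{sign}(\Lambda_2)=\mathrm{sign}(y-Y(\bar u))$ (Lemma~\ref{lm:Phidot=}): if $\Phi^*(t)\geq 0$ and $y^*(t)<Y(\bar u)$, then $\Phi^*>0$ on all of $[0,t)$ (and symmetrically). This is much more rigid than ``finitely many switches'': it means that backward from the last zero $t_0$ of $\Phi^*$ the control is a single bang arc, so if $y^*(t_0)\neq Y(\bar u)$ the trajectory $y^*$ would be driven out of $[0,1]$ in a time computable from the bang flows alone. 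The paper then runs a trichotomy on the terminal value $y^*(T)$ (near $Y(u_{\min})$, below it, above it) and shows, using explicit transit times $T_{i,j}$ of the constant-control flows of \eqref{doty=}--\eqref{dotq=} --- which depend only on the fixed data $q(T)=1/3$ and the interval $[0,1]$, never on $x^0$ --- that the only admissible terminal value is $Y_\psi$, forcing $y^*(T-T_\psi)=Y(\bar u)$, $q^*(T-T_\psi)=\pi(\bar u)$, and hence the form \eqref{eq:upart}. One further local point: your assertion that an extremal reaching $(Y(\bar u),\pi(\bar u))$ is ``stuck there'' is false as stated --- the extremal does leave the singular point along a $u_{\min}$ bang arc at $t=T-T_\psi$; only a trajectory with $\Phi\equiv 0$ on an \emph{interval} is confined to it. Without the no-return corollary and the explicit transit-time bounds, your sketch does not yet constitute a proof.
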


For the optimal relaxed control $(u^*,\sigma(u^*))\to\Sigma$ given by~\eqref{eq:upart}, we can easily build a sequence of piecewise constant functions $(u^n,v^n)_{n\in\N}\to\Omega$ which converges weakly to $(u^*,\sigma(u^*)),$ as claimed in~\eqref{weakconvergence}.
It suffices to replace $u^*=\bar u$ on the interval $[T_0,T-T_\psi)$ by
\[u^n(t)=\left\{\begin{array}{ll}
u_{\min}&\text{if}\quad t\in[T_0+(k-1)\Delta t,T_0+(k-1)\Delta t+\Delta_{\min})
\vspace{1mm}\\
u_{\max}&\text{if}\quad t\in[T_0+(k-1)\Delta t+\Delta_{\min},T_0+k\Delta t)\end{array}\right.,
\quad\text{for}\quad k=1,\cdots,n,\]
where we have defined
\beq\label{eq:deltat}\Delta t=\f{T-(T_0+T_\psi)}{n},\qquad\Delta_{\min}:=\f{u_{\max}-\bar u}{u_{\max}-u_{\min}}\,\Delta t\qquad\text{and}\qquad \Delta_{\max}:=\f{\bar u-u_{\min}}{u_{\max}-u_{\min}}\,\Delta t,\eeq
and to set $v^n=r(u^n).$
This sequence oscillates more when $n$ increases, keeping the same mean values \[\int_0^Tu^n(t)\,dt=\int_0^Tu^*(t)\,dt\qquad\text{and}\qquad\int_0^Tv^n(t)\,dt=\int_0^Tv^*(t)\,dt.\]
This is what happens in Figure~\ref{fig:r_convex} when we solve numerically a discretized optimal control problem.

\begin{figure}[h!]
\begin{center}
\begin{minipage}[b]{0.49\linewidth}
\includegraphics [width=\linewidth]{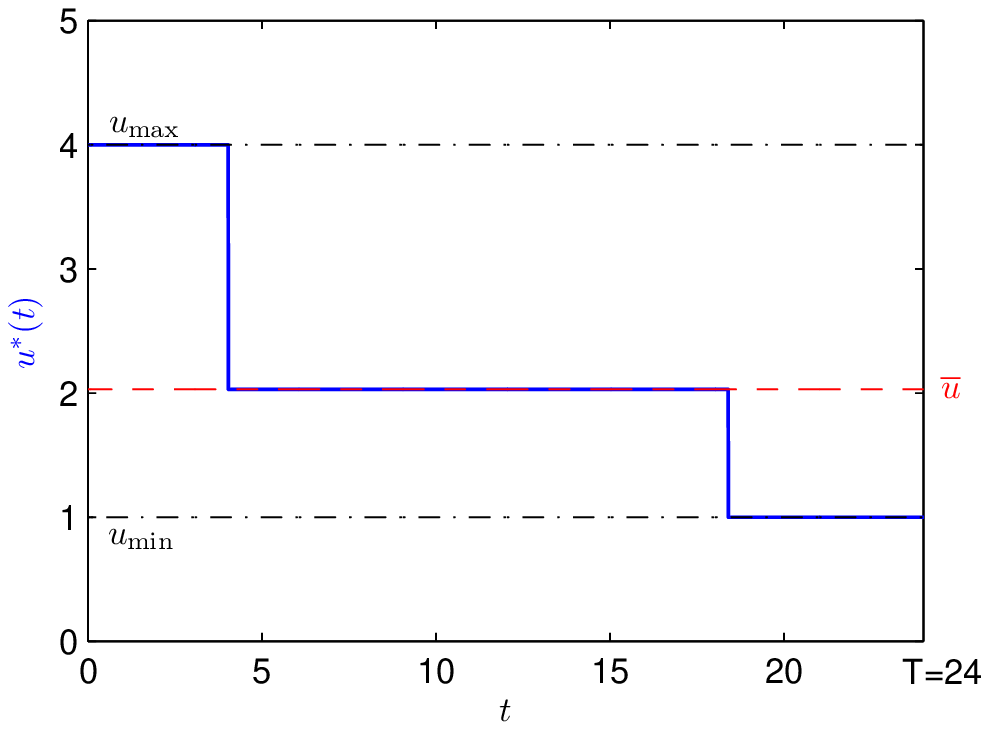}
\end{minipage}\hfill
\begin{minipage}[b]{0.49\linewidth}
\includegraphics [width=\linewidth]{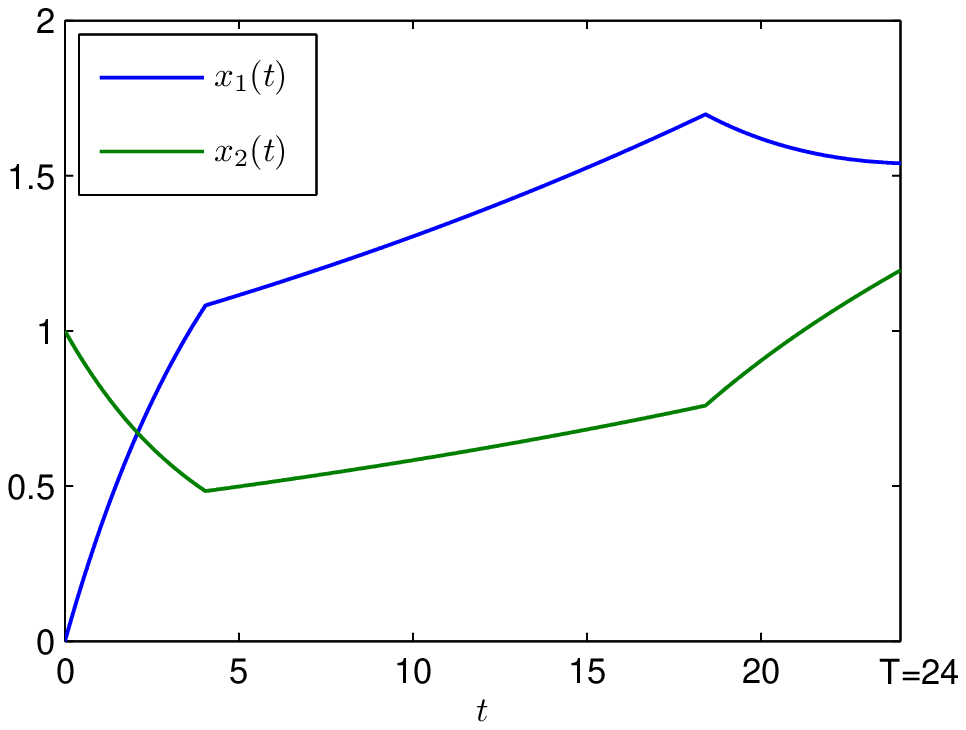}
\end{minipage}
\end{center}

\begin{center}
\begin{minipage}[b]{0.49\linewidth}
\includegraphics [width=\linewidth]{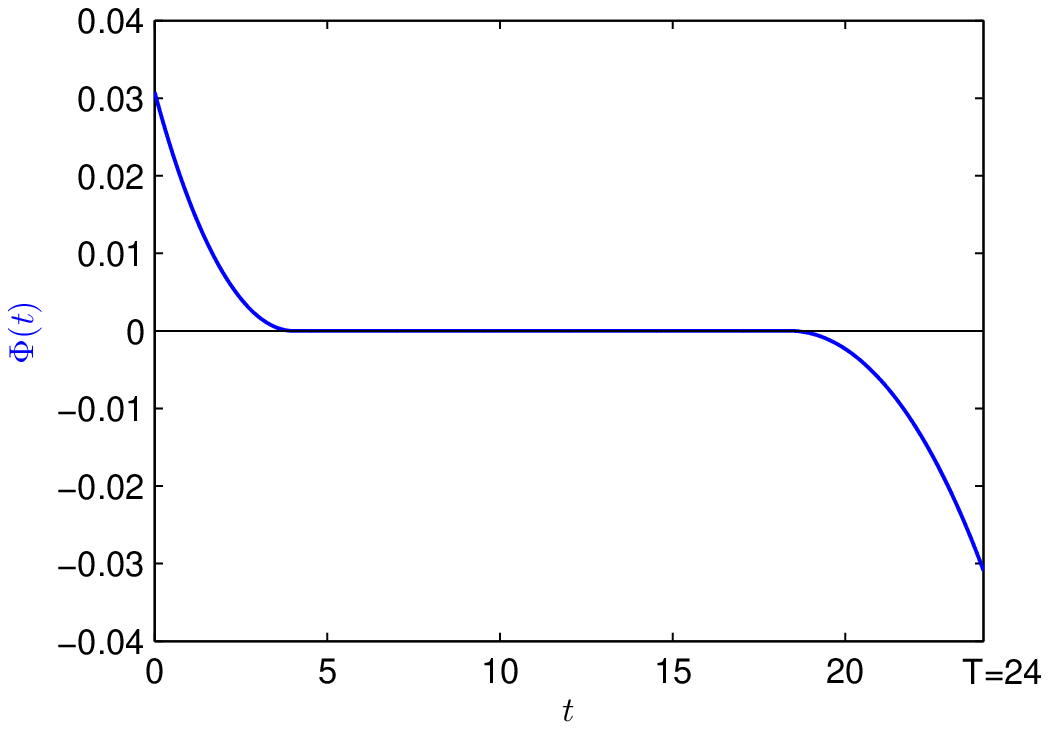}
\end{minipage}\hfill
\begin{minipage}[b]{0.49\linewidth}
\includegraphics [width=\linewidth]{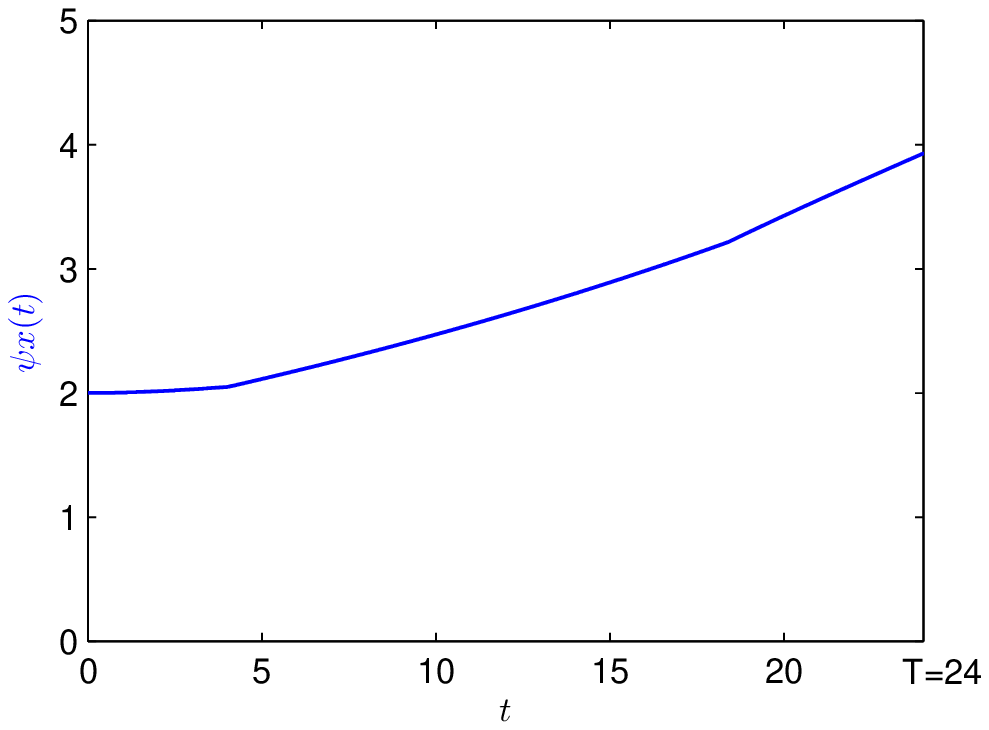}
\end{minipage}
\end{center}
\caption{\label{fig:relax}Top left: the optimal control for $T=24,$ $n=2,$ $\theta=-0.2,$ $\zeta=1,$ $\tau=0.1,$ $\beta=0.05$ and the initial data $x^0=(0,1)^T.$
Top right: the corresponding trajectories $x_1(t)$ and $x_2(t).$
Bottom left: the switching function $\Phi(t).$
Bottom right: the evolution of the objective $\psi x(t)=x_1(t)+2x_2(t).$}
\end{figure}

\

Before proving Theorems~\ref{th:part_cont} and~\ref{th:uniqueness}, we give some useful preliminary results.
First note that for $u$ fixed in $[u_\text{min},u_\text{max}],$ $Y(u)$ and $\pi(u)$ are the unique steady states of respectively~\eqref{doty=} and~\eqref{dotq=} in the interval $[0,1].$
Moreover we know the sign of the vector fields.
\begin{lemma}\label{lm:signdotyq}
For every $u$ in $[u_{\text{min}},u_{\text{max}}]$,
\begin{align}
\label{signdoty}
(y-Y(u))\mathcal{Y}(y,u)&<0,\qquad \forall y \in [0,1] \setminus\{Y(u)\},
\\
\label{signdotq}
(q-\pi(u))\mathcal{Q}(q,u)&>0,\qquad \forall q \in [0,1] \setminus\{\pi (u)\}.
\end{align}
\end{lemma}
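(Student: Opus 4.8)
\textbf{Proof proposal for Lemma~\ref{lm:signdotyq}.}

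The plan is to exploit the fact that the projected dynamics $\mathcal Y(\cdot,u)$ and $\mathcal Q(\cdot,u)$ are quadratic polynomials in $y$ (resp. $q$) whose only steady state in $[0,1]$ is $Y(u)$ (resp. $\pi(u)$), so the sign of each vector field is completely determined once we know the sign of the leading coefficient and the location of the roots. First I would treat $\mathcal Y$. From \eqref{doty=}, $\mathcal Y(y,u)=u\beta\,y^2-(3u\beta+(u\theta+\zeta)\tau)\,y+2u\beta$, a parabola opening upward (the leading coefficient $u\beta>0$ since $u\ge u_{\min}>0$ and $\beta>0$). Its product of roots is $2>0$ and its value at $y=1$ is $\mathcal Y(1,u)=u\beta-3u\beta-(u\theta+\zeta)\tau+2u\beta=-(u\theta+\zeta)\tau<0$ because, as noted just before the statement of Theorem~\ref{th:singular}, $u\theta+\zeta>0$ on the relevant range (indeed $u\in[u_{\min},u_{\max}]\subset(0,\zeta/(-\theta))$ when $\theta<0$, and $u\theta+\zeta>0$ trivially when $\theta\ge 0$). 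Since $\mathcal Y(1,u)<0$ and the parabola opens upward, exactly one root lies in $(0,1)$; by definition that root is the projected Perron eigenvector coordinate $Y(u)$ (the unique steady state of \eqref{doty=} in $[0,1]$, as recalled in the sentence preceding the lemma), the other root being $2/Y(u)>1$. Hence for $y\in[0,Y(u))$ one has $\mathcal Y(y,u)>0$ and for $y\in(Y(u),1]$ one has $\mathcal Y(y,u)<0$, which is exactly \eqref{signdoty}.

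Next I would do the same for $\mathcal Q$. From \eqref{dotq=}, up to the overall minus sign, $\mathcal Q(q,u)=-\big[(2(\theta u+\zeta)\tau-3u\beta)q^2-(3(\theta u+\zeta)\tau-u\beta)q+(\theta u+\zeta)\tau\big]$. Write $a:=(\theta u+\zeta)\tau>0$ and $b:=u\beta>0$, so $\mathcal Q(q,u)=-\big[(2a-3b)q^2-(3a-b)q+a\big]$. Evaluate at the endpoints: $\mathcal Q(0,u)=-a<0$ and $\mathcal Q(1,u)=-\big[(2a-3b)-(3a-b)+a\big]=-(-2b)=2b>0$. Since $\mathcal Q(\cdot,u)$ is continuous and changes sign on $[0,1]$, it has at least one root there; but $\pi(u)$ is the unique steady state of \eqref{dotq=} in $[0,1]$, so it is the unique root in $[0,1]$ (a quadratic can have at most two real roots, and one can check the other root — the product of the two roots is $a/(2a-3b)$ — lies outside $[0,1]$, or simply note that if there were a second root in $[0,1]$ the quadratic would not change sign across $[0,1]$, contradicting $\mathcal Q(0,u)<0<\mathcal Q(1,u)$). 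Therefore $\mathcal Q(q,u)<0$ for $q\in[0,\pi(u))$ and $\mathcal Q(q,u)>0$ for $q\in(\pi(u),1]$, which is precisely \eqref{signdotq}.

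The only genuinely delicate point is the uniqueness of the steady state in $[0,1]$ — i.e., that the ``second'' root of each quadratic really does fall outside $[0,1]$. For $\mathcal Y$ this is immediate from the product-of-roots identity (product $=2$, one root in $(0,1)$ forces the other $>2$). For $\mathcal Q$, I would either (i) invoke directly the assertion made in the line ``$Y(u)$ and $\pi(u)$ are the unique steady states of respectively~\eqref{doty=} and~\eqref{dotq=} in the interval $[0,1]$'' — which follows from the Perron–Frobenius structure of $uF+(\theta u+\zeta)G$, whose unique (up to scaling) positive left eigenvector has a well-defined projection $\pi(u)\in(0,1)$, every other eigenvector being non-positive and hence having projection outside $[0,1]$ — or (ii) give the short computation that the discriminant and the product/sum of roots of $(2a-3b)q^2-(3a-b)q+a$ put the second root outside $[0,1]$, splitting into the cases $2a-3b>0$, $=0$ (then $\mathcal Q$ is affine and has its single root $\pi(u)=a/(3a-b)\in(0,1)$), and $<0$. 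Route (i) is cleaner and is already justified in the text preceding the lemma, so I would use it, reducing the proof to the endpoint sign evaluations $\mathcal Y(1,u)<0$, $\mathcal Q(0,u)<0$, $\mathcal Q(1,u)>0$ together with the sign of the leading coefficients, all of which are one-line consequences of $u\theta+\zeta>0$, $\beta>0$, $\tau>0$, $u>0$.
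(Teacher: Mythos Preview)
Your proposal is correct and is essentially the paper's argument. The paper's own proof also relies on the prior assertion that $Y(u)$ (resp.\ $\pi(u)$) is the unique zero of $\mathcal Y(\cdot,u)$ (resp.\ $\mathcal Q(\cdot,u)$) in $[0,1]$, and then pins down the sign by the endpoint evaluations $\mathcal Y(0,u)=2u\beta>0$, $\mathcal Y(1,u)=-(u\theta+\zeta)\tau<0$ (and ``the same argument'' for $\mathcal Q$); your extra observations on the leading coefficient and product of roots are fine but not needed once uniqueness is granted.
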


\begin{proof}
Let $u\in[u_{\text{min}},u_{\text{max}}].$
The function $y\mapsto(y-Y(u))\mathcal{Y}(y,u)$ is a third order polynomial which vanishes only at $y=Y(u)$ on the interval $[0,1].$
We obtain~\eqref{signdoty} by computing for instance $\mathcal Y(0,u)=2u\beta>0$ and $\mathcal Y(1,u)=-(u\theta+\zeta)\tau<0.$
The same argument allows to prove~\eqref{signdotq}.
\end{proof}

\begin{corollary}\label{co:asymptotic}
For $u$ fixed in $[u_{\text{min}},u_{\text{max}}]$, every solution $y$ of \eqref{doty=} with $y(0)$ in $[0,1]$ satisfies
\[(y(t)-Y(u))^2\downarrow0\quad\text{when}\quad t\to+\infty,\]
and every solution $q$ of \eqref{dotq=} with $q(0)$ in $[0,1]$ satisfies
\[(q(t)-\pi(u))^2\downarrow0\quad\text{when}\quad t\to-\infty.\]
\end{corollary}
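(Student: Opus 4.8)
The plan is to prove each of the two convergences by a one-dimensional Lyapunov argument built directly on the sign information of Lemma~\ref{lm:signdotyq}, together with the monotone convergence theorem. Concretely, set $V(t):=(y(t)-Y(u))^2$ and $W(t):=(q(t)-\pi(u))^2$; the assertion is exactly that $V(t)$ decreases to $0$ as $t\to+\infty$ and $W(t)$ decreases to $0$ as $t\to-\infty$.

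For the statement about $y$, I would first record that $[0,1]$ is forward invariant for \eqref{doty=}: since $\mathcal Y(0,u)=2u\beta>0$ and $\mathcal Y(1,u)=-(u\theta+\zeta)\tau<0$ (the endpoint values already used in the proof of Lemma~\ref{lm:signdotyq}), a solution starting in $[0,1]$ cannot leave it, so it is confined to a compact set and is defined for all $t\ge0$. Differentiating, $\dot V(t)=2(y(t)-Y(u))\,\mathcal Y(y(t),u)\le0$ by \eqref{signdoty}, hence $V$ is nonincreasing and bounded below by $0$, so $V(t)\downarrow\ell$ for some $\ell\ge0$. To see $\ell=0$, suppose $\ell>0$; then $|y(t)-Y(u)|\ge\sqrt\ell$ for all $t\ge0$, so $y(t)$ stays in the compact set $K:=\{y\in[0,1]:|y-Y(u)|\ge\sqrt\ell\}$, on which the continuous function $y\mapsto 2(y-Y(u))\mathcal Y(y,u)$ is, by \eqref{signdoty}, strictly negative and therefore attains a maximum $-2c<0$. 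Then $\dot V(t)\le -2c$ for all $t\ge0$, whence $V(t)\le V(0)-2ct\to-\infty$, contradicting $V\ge0$; thus $\ell=0$. If $y(0)=Y(u)$ the claim is trivial, and if $y(0)\ne Y(u)$ then uniqueness for the scalar ODE together with the fact that $Y(u)$ is an equilibrium gives $y(t)\ne Y(u)$ for all $t$, so the decay is strict, consistent with the notation $\downarrow$.

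For the statement about $q$, the same scheme applies after reversing time. Here $\mathcal Q(0,u)=-(\theta u+\zeta)\tau<0$ and $\mathcal Q(1,u)=2u\beta>0$, which shows that $[0,1]$ is invariant for \eqref{dotq=} under the backward flow, so a solution with $q(0)\in[0,1]$ is defined and remains in $[0,1]$ for all $t\le 0$. Now $\dot W(t)=2(q(t)-\pi(u))\,\mathcal Q(q(t),u)\ge0$ by \eqref{signdotq}, so $W$ is nondecreasing on $(-\infty,0]$; being bounded below by $0$, it has a limit $\ell'\ge0$ as $t\to-\infty$, with $W(t)\ge\ell'$ for all $t\le0$. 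If $\ell'>0$, then $q(t)$ stays in $K':=\{q\in[0,1]:|q-\pi(u)|\ge\sqrt{\ell'}\}$, on which $q\mapsto 2(q-\pi(u))\mathcal Q(q,u)$ has a strictly positive minimum $2c'>0$ by \eqref{signdotq}; integrating $\dot W\ge 2c'$ from $t$ to $0$ gives $W(t)\le W(0)+2c't\to-\infty$ as $t\to-\infty$, a contradiction, so $\ell'=0$.

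I do not expect a serious obstacle here: the result is essentially a phase-line argument for scalar ODEs. The only points needing a little care are (i) justifying global existence by checking that $[0,1]$ is invariant in the correct time direction — forward for $y$, backward for $q$ — which just reuses the endpoint computations from the proof of Lemma~\ref{lm:signdotyq} (and the fact that $\theta u+\zeta>0$ on $[u_{\min},u_{\max}]$ so that the hypotheses of Lemma~\ref{lm:signdotyq} apply); and (ii) interpreting the monotone notation $\downarrow$ as ``nonincreasing with limit $0$,'' strictness in the nontrivial case following from uniqueness for the scalar ODEs.
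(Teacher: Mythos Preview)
Your proof is correct and follows essentially the same Lyapunov approach as the paper: both differentiate $(y(t)-Y(u))^2$ and $(q(t)-\pi(u))^2$ and invoke the sign information from Lemma~\ref{lm:signdotyq}. The paper's proof is terser---it records only the sign of the derivative and leaves invariance of $[0,1]$ and the passage from ``strictly decreasing'' to ``limit $0$'' as implicit---whereas you spell out these points via the endpoint values and a standard compactness contradiction, which is a welcome addition but not a different method.
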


\begin{proof}
We get from~\eqref{signdoty} that
\[\f{d}{dt}(y(t)-Y(u))^2=2(y(t)-Y(u))\mathcal Y(y(t),u)<0\]
while $y(t)\neq Y(u),$
and from~\eqref{signdotq} that
\[\f{d}{dt}(q(-t)-\pi(u))^2=-2(q(-t)-\pi(u))\mathcal Q(q(-t),u)<0\]
while $q(t)\neq \pi(u).$
\end{proof}

\begin{remark}
Corollary~\ref{co:asymptotic} allows to recover the property that $y(t)\in(0,1)$ and $q(t)\in(0,1)$ for every $t\in[0,T]$ as soon as $y(0)\in(0,1)$ and $q(T)\in(0,1).$
\end{remark}

From the proof of Corollary~\ref{co:asymptotic} we readily get another useful consequence of Lemma~\ref{lm:signdotyq}.

\begin{corollary}\label{co:defT}
For $u$ fixed in $[u_{\text{min}},u_{\text{max}}],$ every solution $y$ of \eqref{doty=} satisfies
\[\left\{\begin{array}{l}
0<y(0)<Y(u)\implies\exists !\, T>0,\ y(-T)=0,
\vspace{2mm}\\
Y(u)<y(0)<1\implies\exists !\, T>0,\ y(-T)=1.\end{array}\right.\]
\end{corollary}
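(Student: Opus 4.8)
The plan is to pass to backward time and reduce everything to a monotonicity statement for the scalar autonomous ODE $\dot y=\mathcal Y(y,u)$, using only the sign information already recorded in Lemma~\ref{lm:signdotyq} together with the elementary fact that $\mathcal Y(\cdot,u)$ does not vanish at the endpoints $0$ and $1$. Fix $u\in[u_{\min},u_{\max}]$ and set $z(s):=y(-s)$, so that $z$ solves $\dot z=-\mathcal Y(z,u)$ with $z(0)=y(0)$. Treat the case $0<y(0)<Y(u)$ first. By \eqref{signdoty} we have $\mathcal Y(\cdot,u)>0$ on $[0,Y(u))$, and a glance at the explicit expression in \eqref{doty=}, namely $\mathcal Y(w,u)=u\beta w^2-(3u\beta+(\theta u+\zeta)\tau)w+2u\beta$, shows that $\mathcal Y(w,u)>0$ for every $w\le 0$ as well (all three monomials are nonnegative there since $\theta u+\zeta=\sigma(u)=r(u)>0$, and the constant term $2u\beta$ is positive). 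Hence $\dot z<0$ wherever $z<Y(u)$; since $z(0)=y(0)<Y(u)$, the solution $z$ is strictly decreasing for all $s\ge 0$ for which it is defined, in particular it stays in $(-\infty,y(0)]$ and can never return to the level $y(0)$.

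Next I would produce the crossing time and its finiteness from a uniform speed bound. Let $m:=\min_{w\in[0,y(0)]}\mathcal Y(w,u)$, which is positive by continuity and the positivity just established on the compact set $[0,y(0)]\subset[0,Y(u))$. As long as $z(s)\in[0,y(0)]$ one has $\dot z(s)\le -m<0$, so $z(s)\le y(0)-ms$; in particular the solution stays bounded (hence is defined) on this range and must reach the value $0$ at some time $T$ with $0<T\le y(0)/m$, the strict positivity coming from $z(0)=y(0)>0$ and continuity. Uniqueness of $T$ is then immediate: $z$ is strictly decreasing on $[0,T]$, hence injective there, and at $z=0$ one has $\dot z=-\mathcal Y(0,u)=-2u\beta<0$ together with $\mathcal Y(\cdot,u)>0$ on $(-\infty,0]$, so $z$ leaves $0$ strictly downward and never returns; thus $T$ is the only positive time with $y(-T)=0$. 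The case $Y(u)<y(0)<1$ is entirely symmetric: \eqref{signdoty} now gives $\mathcal Y(\cdot,u)<0$ on $(Y(u),1]$ (at the endpoint $\mathcal Y(1,u)=-(\theta u+\zeta)\tau<0$), so $\dot z=-\mathcal Y(z,u)>0$ on $[y(0),1]$; with $m':=\min_{[y(0),1]}\bigl(-\mathcal Y(\cdot,u)\bigr)>0$ one gets $z(s)\ge y(0)+m's$ while $z\le 1$, hence a unique first time $T\le(1-y(0))/m'$ with $z(T)=1$, i.e. $y(-T)=1$, the non‑recurrence again using $\dot z>0$ at and just beyond $z=1$.

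The only delicate point — and hence the main (though mild) obstacle — is to guarantee that $z$ genuinely attains the boundary value ($0$, resp.\ $1$) in finite time rather than merely approaching it or converging to some interior limit; this is exactly where one invokes that $\mathcal Y(\cdot,u)$ has no zero at the endpoints, which rules out any slowing down and supplies the uniform lower speed bounds $m$ and $m'$. Everything else is bookkeeping with the signs of $\mathcal Y$, in the same spirit as the proof of Corollary~\ref{co:asymptotic}, together with the standard a priori bound argument for the local solution of the (Riccati‑type) ODE on the bounded region where the trajectory lives.
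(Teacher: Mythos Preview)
Your argument is correct and is precisely what the paper has in mind: it gives no separate proof, merely saying the result follows from the proof of Corollary~\ref{co:asymptotic} together with Lemma~\ref{lm:signdotyq}, i.e., backward monotonicity of $y$ forced by the sign of $\mathcal Y$ plus nonvanishing of $\mathcal Y$ at the endpoints $0$ and $1$. Two small corrections: (i) you write $\theta u+\zeta=\sigma(u)=r(u)$, but $\sigma(u)=r(u)$ only at $u_{\min}$ and $u_{\max}$; the positivity $\theta u+\zeta>0$ on $[u_{\min},u_{\max}]$ follows instead from $\sigma$ being the affine interpolant of the two positive values $r(u_{\min}),r(u_{\max})$; (ii) in the symmetric case your non-recurrence claim ``$\dot z>0$ at and just beyond $z=1$'' is only local, and in fact $\mathcal Y(\cdot,u)$ is \emph{not} negative on all of $[1,\infty)$ since the quadratic has a second root $w_+>1$---but uniqueness still holds because $z$ is strictly increasing on $(Y(u),w_+)$ and cannot cross the equilibrium $w_+$, hence stays in $(1,w_+)$ for $s>T$.
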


The following lemma orders the steady states $Y(u)$ and $\pi(u)$ for different values of $u.$

\begin{lemma}\label{lm:orderYpi}
We have the comparisons
\[Y(u_\text{min})<Y(\bar u)<Y(u_\text{max})\qquad\text{and}\qquad\pi(u_\text{min})>\pi(\bar u)>\pi(u_\text{max})>\f13.\]
\end{lemma}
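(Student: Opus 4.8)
The plan is to prove Lemma~\ref{lm:orderYpi} by combining the explicit description of the singular trajectory from Theorem~\ref{th:singular} and Corollary~\ref{co:opt_perron} with monotonicity properties of the one-dimensional vector fields $\mathcal Y(\cdot,u)$ and $\mathcal Q(\cdot,u)$ in the parameter $u$. The key observation is that $Y(u)$ is the unique root in $[0,1]$ of $\mathcal Y(\cdot,u)$ and $\pi(u)$ the unique root in $[0,1]$ of $\mathcal Q(\cdot,u)$, so to order these roots for $u\in\{u_{\min},\bar u,u_{\max}\}$ it suffices to determine the sign of $u\mapsto\mathcal Y(y,u)$ and $u\mapsto\mathcal Q(q,u)$ at the relevant fixed point, together with the sign of $\mathcal Y$ and $\mathcal Q$ near their roots given by Lemma~\ref{lm:signdotyq}.

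First I would compute $\partial_u\mathcal Y(y,u) = 2\beta - (3\beta+\theta\tau)y + \beta y^2$ from~\eqref{doty=}, and evaluate it at $y=Y(\bar u)$. Using the explicit value of $\bar X$ from Corollary~\ref{co:opt_perron}, one has $Y(\bar u)=\frac{2\beta}{2\beta+\sqrt{-2\theta\tau\beta}}$; substituting this (a short computation, which I will not grind through) should show $\partial_u\mathcal Y(Y(\bar u),u)>0$. Since $\mathcal Y(Y(\bar u),\bar u)=0$, this gives $\mathcal Y(Y(\bar u),u_{\min})<0<\mathcal Y(Y(\bar u),u_{\max})$. Then Lemma~\ref{lm:signdotyq}, which tells us $\mathcal Y(y,u)$ changes sign from $+$ to $-$ as $y$ crosses $Y(u)$, forces $Y(u_{\min})<Y(\bar u)$ (because at $Y(\bar u)$ the field $\mathcal Y(\cdot,u_{\min})$ is already negative, so $Y(u_{\min})$ lies to the left) and $Y(\bar u)<Y(u_{\max})$. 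The argument for $\pi$ is entirely parallel: compute $\partial_u\mathcal Q(q,u)$ from~\eqref{dotq=}, evaluate it at $\pi(\bar u)=\frac{\beta+\sqrt{-2\theta\tau\beta}}{3\beta+2\sqrt{-2\theta\tau\beta}}$ (from $\bar\phi$ in Corollary~\ref{co:opt_perron}), check the sign, and invoke the sign-change property~\eqref{signdotq}. For the final inequality $\pi(u_{\max})>\frac13$, I would simply evaluate $\mathcal Q(\frac13,u_{\max})$ and use~\eqref{signdotq}: since $\pi(\bar u)>\frac13$ already follows from the closed-form value (as $\sqrt{-2\theta\tau\beta}>0$), and $\mathcal Q(\cdot,u)$ increases through its root, it is enough to show $\mathcal Q(\frac13,u_{\max})<0$, equivalently that $\frac13$ lies below $\pi(u_{\max})$.

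The main obstacle I anticipate is purely computational bookkeeping rather than conceptual: one must carefully verify the sign of $\partial_u\mathcal Y$ and $\partial_u\mathcal Q$ at the specific fixed points, and these expressions involve the square root $\sqrt{-2\theta\tau\beta}$ and the parameters $\theta<0$, $\zeta>0$, $\tau,\beta>0$, so the algebra must be organized so that each term's sign is transparent. A clean way to do this is to substitute $\mu:=\sqrt{-2\theta\tau\beta}>0$ and rewrite $Y(\bar u)=\frac{2\beta}{2\beta+\mu}$, $\pi(\bar u)=\frac{\beta+\mu}{3\beta+2\mu}$, reducing everything to rational functions of $\beta$ and $\mu$ with manifestly positive numerators; the desired strict inequalities then reduce to polynomial inequalities in $\beta,\mu>0$ that hold term by term. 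One subtlety worth a remark: we only need the sign of the derivative $\partial_u\mathcal Y$ at one point, not monotonicity of $Y$ on all of $[u_{\min},u_{\max}]$, so the proof does not require $Y$ or $\pi$ to be monotone functions of $u$ globally — only that the three specific values are ordered as claimed.
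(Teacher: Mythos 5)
Your proposal is correct, and it takes a genuinely different route from the paper. The paper works with the closed-form expressions $Y(u)=\frac{u\beta-(u\theta+\zeta)\tau+\sqrt{\Delta}}{u\beta+(u\theta+\zeta)\tau+\sqrt{\Delta}}$ and $\pi(u)=\frac{u\beta-(u\theta+\zeta)\tau+\sqrt{\Delta}}{5u\beta-(u\theta+\zeta)\tau+\sqrt{\Delta}}$ coming from the discriminant computation of Appendix~\ref{sec:altproof}, differentiates them, and proves \emph{global} monotonicity ($Y'>0$, $\pi'<0$ on all of $(0,\frac{\zeta}{-\theta})$) via the inequality $2\Delta\Delta''-(\Delta')^2<0$ of Lemma~\ref{lm:lbsecond}; the bound $\pi(u_{\max})>\frac13$ then follows from $\pi(\frac{\zeta}{-\theta})=\frac13$. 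You instead stay entirely at the level of the projective vector fields: since $\mathcal Y(y,u)$ and $\mathcal Q(q,u)$ are affine in $u$, a single sign check of $\partial_u\mathcal Y$ at $y=Y(\bar u)$ and of $\partial_u\mathcal Q$ at $q=\pi(\bar u)$, combined with the sign-change property of Lemma~\ref{lm:signdotyq}, orders the three roots; and $\mathcal Q(\frac13,u)=-\frac{2(\theta u+\zeta)\tau}{9}<0$ gives $\pi(u)>\frac13$ for every admissible $u$ directly. I have checked the two deferred computations: with $\mu=\sqrt{-2\theta\tau\beta}$ one finds $\partial_u\mathcal Y(Y(\bar u),\cdot)=\frac{\mu(2\beta^2+4\beta\mu+\mu^2)}{(2\beta+\mu)^2}>0$ and $\partial_u\mathcal Q(\pi(\bar u),\cdot)=\frac{\mu(2\beta^2+4\beta\mu+\mu^2)}{2(3\beta+2\mu)^2}>0$, so both signs are as required --- note that for $\mathcal Q$ the \emph{positive} sign is the one that forces $\pi(u_{\min})>\pi(\bar u)>\pi(u_{\max})$, because \eqref{signdotq} has the opposite orientation to \eqref{signdoty}; be explicit about this when you write it up. Your approach is more elementary (pure polynomial sign checks, no appendix machinery) but proves strictly less, namely the three-point ordering rather than monotonicity of $Y$ and $\pi$; since the rest of the paper only ever invokes the orderings stated in the lemma, this weaker conclusion suffices.
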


\begin{proof}
We study the variations of the function $u\mapsto Y(u)$.
Starting from the computations in the computational proof of Theorem~\ref{th:perrondim2} in Appendix~\ref{sec:altproof}, we get
\[Y(u)=\f{\lb_P(u)+u\beta}{\lb_P(u)+u\beta+(u\theta+\zeta)\tau}=\f{u\beta-(u\theta+\zeta)\tau+\sqrt{\Delta}}{u\beta+(u\theta+\zeta)\tau+\sqrt{\Delta}},\]
and by differentiation
\[Y'(u)=\f{2h(u)}{\bigl(u\beta+(u\theta+\zeta)\tau+\sqrt{\Delta}\bigr)^2}\qquad\text{with}\qquad h(u):=\beta\zeta\tau-\theta\tau\sqrt{\Delta}+\f{\Delta'}{2\sqrt{\Delta}}(u\theta+\zeta)\tau.\]
To have the sign of $h$ on $[0,\displaystyle\frac{\zeta}{-\theta} ]$, we differentiate $h$ with respect to $u$
\[h'(u)=\frac{(u\theta+\zeta)\tau}{4\Delta\sqrt{\Delta}}\left(2\Delta\Delta''-(\Delta')^2\right).\]
We know from Lemma~\ref{lm:lbsecond} in Appendix~\ref{sec:altproof} that $2\Delta\Delta''-(\Delta')^2<0.$
So $h$ is decreasing and since $h(\displaystyle\frac{\zeta}{-\theta})=2\zeta\beta\tau>0$, we get
that $h>0$ on $[0,\displaystyle\frac{\zeta}{-\theta}]$.
Then $Y$ is increasing and
\[Y(u_{\min})<Y(\bar u)<Y(u_{\max}).\]

For the variations of $\pi(u),$ we compute
\[\pi(u)=\f{\lb_P(u)+u\beta}{\lb_P(u)+3u\beta}=\frac{u\beta-(u\theta+\zeta)\tau+
\sqrt{\Delta}}{5u\beta-(u\theta+\zeta)\tau+\sqrt{\Delta}}\]
and
\[\pi'(u)=\frac{4 g(u)}{\left(5u\beta-(u\theta+\zeta)\tau+\sqrt{\Delta}\right)^2}\qquad\text{with}\qquad
g(u)=\beta\left(\zeta\tau-\sqrt\Delta+\f{u \Delta'}{2\sqrt\Delta}\right).\]
Using Lemma~\ref{lm:lbsecond} in Appendix~\ref{sec:altproof} we have
\[g'(u)=\f{\beta}{4\Delta\sqrt\Delta}\left(2\Delta\Delta''-(\Delta')^2\right)<0,\]
so $g<0$ since $g(0)=0$.
We obtain that $\pi(u)$ is decreasing and
\[\pi(u_{\min})>\pi(\bar u)>\pi(u_{\max})>\pi\Bigl(\f{\zeta}{-\theta}\Bigr)=\frac{1}{3}.\]

\end{proof}

We give now a last lemma and a corollary, which are key points in the proofs of Theorem~\ref{th:part_cont} and Theorem~\ref{th:uniqueness}.

\begin{lemma}\label{lm:Phidot=}
The switching function $\Phi$ defined in~\eqref{def:Phi} satisfies the equation
\[\dot\Phi
=\Lambda_1\Phi+\Lambda_2,\]
with
\[\Lambda_1 :=\zeta\tau\frac{x_1}{x_2}\qquad\text{and}\qquad
\Lambda_2 := \zeta\tau(p_2-p_1)\Bigl(-\theta\tau\Bigl(\frac{x_1}{x_2}\Bigr)^2-2\beta\Bigr)x_2.\]
Moreover for every $t\in[0,T]$ we have $\Lambda_1(t)>0$ and
\[\text{sign}(\Lambda_2(t))=\text{sign}(y(t)-Y(\bar u)),\]
where, for every real number $a$, $\text{sign}(a)=1$ if $a>0$, $\text{sign}(a)=-1$ if $a<0$ and $\text{sign}(a)=0$ if $a=0$.
\end{lemma}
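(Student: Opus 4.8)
The plan is to compute $\dot\Phi$ directly from the definition $\Phi(t)=p(t)(F+\theta G)x(t)$ by differentiating along the dynamics \eqref{eq:dynx}--\eqref{eq:dynp}. Since $\dot x = u(F+\theta G)x+\zeta Gx$ and $\dot p = -up(F+\theta G)-\zeta pG$, the terms involving $u(F+\theta G)$ cancel and one is left with
$\dot\Phi = \zeta p\bigl((F+\theta G)G - G(F+\theta G)\bigr)x = \zeta\,p[F,G]x$,
exactly as was already observed in the proof of Theorem~\ref{th:singular}. So the first step is to evaluate $p[F,G]x$ explicitly in the two-dimensional case, using the matrices
\[F=\begin{pmatrix}0&2\beta\\0&-\beta\end{pmatrix},\qquad G=\begin{pmatrix}-\tau&0\\\tau&0\end{pmatrix}.\]
A direct computation gives $[F,G]=FG-GF$; I would compute its four entries, then form $p[F,G]x$ as a bilinear expression in $(p_1,p_2)$ and $(x_1,x_2)$.

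The second step is purely algebraic: rewrite this bilinear expression so that it splits into a multiple of $\Phi$ plus a remainder. Recall from \eqref{eq:phi=0}-type algebra that $\Phi = p(F+\theta G)x = -\theta\tau x_1(p_1-p_2)+\beta x_2(2p_1-p_2)$. The idea is to add and subtract an appropriate multiple of this quantity inside $\zeta p[F,G]x$: one chooses the coefficient $\Lambda_1$ so that the $x_1$-terms (equivalently the $(p_1-p_2)$-terms) match up, which forces $\Lambda_1 = \zeta\tau\,x_1/x_2$, and whatever is left over is by definition $\Lambda_2$. Carrying this out and factoring, the remainder should collapse to
$\Lambda_2 = \zeta\tau(p_2-p_1)\bigl(-\theta\tau(x_1/x_2)^2-2\beta\bigr)x_2$,
up to routine simplification. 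This is bookkeeping, not a real obstacle — the only care needed is to keep signs straight and to divide/multiply by $x_2>0$ legitimately, which is justified since $x(t)>0$ for all $t\in[0,T]$.

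The third step establishes the sign claims. That $\Lambda_1(t)>0$ is immediate from $\zeta>0$, $\tau>0$ and $x_1(t),x_2(t)>0$. For $\Lambda_2$, note first that Lemma~\ref{lm:qpositivity} gives $p_2(t)-p_1(t)>0$ for $t\in[0,T)$, and also that the factor $-\theta\tau(x_1/x_2)^2-2\beta$ is a monotone function of the ratio $x_1/x_2$ (since $-\theta>0$), vanishing precisely when $(x_1/x_2)^2=-2\beta/(\theta\tau)$, i.e.\ when $x_1=\sqrt{2\beta/(-\theta\tau)}\,x_2$. By \eqref{eq:x1x2} this is exactly the ratio realized along the singular trajectory, whose projective value is $Y(\bar u)$ (using $\bar u=u_{\mathrm{sing}}$ from Theorem~\ref{th:perrondim2} and the explicit $\bar X$ of Corollary~\ref{co:opt_perron}). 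Translating $x_1/x_2$ into the projective variable $y=x_1/(x_1+x_2)$, which is an increasing function of $x_1/x_2$, one concludes that the sign of $-\theta\tau(x_1/x_2)^2-2\beta$ equals the sign of $y-Y(\bar u)$; combined with $p_2-p_1>0$ and $\zeta\tau x_2>0$ this yields $\mathrm{sign}(\Lambda_2(t))=\mathrm{sign}(y(t)-Y(\bar u))$ for $t\in[0,T)$, with the equality case handled directly. The main (mild) obstacle is the sign-chasing in step two; everything else is a direct application of results already proved.
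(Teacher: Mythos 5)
Your proposal is correct and takes essentially the same route as the paper: the identity $\dot\Phi=\zeta\, p[F,G]x$, the splitting into $\Lambda_1\Phi+\Lambda_2$ with $\Lambda_1=\zeta\tau x_1/x_2$, and the sign analysis via $p_2-p_1>0$ (Lemma~\ref{lm:qpositivity}) together with the comparison of $x_1/x_2$ to the singular ratio $\sqrt{2\beta/(-\theta\tau)}$ from Corollary~\ref{co:opt_perron}. The algebra you defer does check out (the $\theta$-terms cancel and the remainder is exactly $\Lambda_2$), and the paper's own proof is in fact terser, omitting the ODE verification entirely.
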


\begin{proof}
From the positivity of $x(t),$ we get $\Lambda_1(t)>0.$
For $\Lambda_2$, it is noteworthy from Lemma~\ref{lm:qpositivity} that $p_2-p_1>0$.
Thus, the sign of $\Lambda_2$ does not depend on $p$ and from Corollary~\ref{co:opt_perron}, we get
\[\text{sign}(\Lambda_2)=\text{sign}\left(\f{x_1^2}{x_2^2}-\f{2\beta}{-\theta\tau}\right)=\text{sign}\left(\biggl(\f{y}{1-y}\biggr)^2-\biggl(\f{Y(\bar u)}{1-Y(\bar u)}\biggr)^2\right)=\text{sign}(y-Y(\bar u)).\]
\end{proof}

\begin{corollary}\label{co:signPhi}
For all $t>0,$ we have
\[\left\{\begin{array}{l}
\Bigl(\Phi^*(t)\geq0\quad\text{and}\quad y^*(t)<Y(\bar u)\Bigr)\implies\Bigl(\Phi^*(s)>0,\quad\forall s<t\Bigr),
\vspace{2mm}\\
\Bigl(\Phi^*(t)\leq0\quad\text{and}\quad y^*(t)>Y(\bar u)\Bigr)\implies\Bigl(\Phi^*(s)<0,\quad\forall s<t\Bigr).
\end{array}\right.\]
\end{corollary}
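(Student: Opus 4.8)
The plan is to prove the first implication by contradiction, the second one being entirely symmetric. So fix $t>0$ with $\Phi^*(t)\ge 0$ and $y^*(t)<Y(\bar u)$, and suppose, for contradiction, that the set $S:=\{s\in[0,t):\Phi^*(s)\le 0\}$ is nonempty; put $s^*:=\sup S$. Since $\Lambda_1$ and $\Lambda_2$ are continuous (they are rational expressions in the continuous, positive components of $x^*$ and $p^*$), Lemma~\ref{lm:Phidot=} shows that $\Phi^*\in\mathcal C^1([0,T])$. The first routine step is to check that $s^*<t$: if $\Phi^*(t)>0$ this is immediate by continuity, while if $\Phi^*(t)=0$ then $\dot\Phi^*(t)=\Lambda_2(t)$ has, by Lemma~\ref{lm:Phidot=}, the sign of $y^*(t)-Y(\bar u)<0$, so $\Phi^*$ is strictly positive on a left neighbourhood of $t$. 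In either case $\Phi^*>0$ on $(s^*,t)$, and hence, by continuity of $\Phi^*$ together with the definition of $s^*$, $\Phi^*(s^*)=0$.

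The heart of the argument is to control $y^*$ on $(s^*,t)$. There $\Phi^*>0$, so the maximality condition \eqref{uaf} forces $u^*\equiv u_{\max}$ a.e. on $(s^*,t)$; thus $y^*$ restricted to $(s^*,t]$ is an integral curve of the autonomous equation $\dot y=\mathcal Y(y,u_{\max})$. Because $Y(u_{\max})$ is an equilibrium of this equation and $y^*(t)<Y(\bar u)<Y(u_{\max})$ by Lemma~\ref{lm:orderYpi}, the curve cannot reach $Y(u_{\max})$, so $y^*<Y(u_{\max})$ on $(s^*,t]$; then Lemma~\ref{lm:signdotyq} (with $u=u_{\max}$) gives $\dot y^*>0$ a.e. on $(s^*,t)$. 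Consequently $y^*$ is nondecreasing on $[s^*,t]$, so $y^*(s)\le y^*(t)<Y(\bar u)$ for every $s\in[s^*,t]$, which by Lemma~\ref{lm:Phidot=} means $\Lambda_2(s)<0$ for all $s\in[s^*,t]$.

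It remains only to integrate the linear equation of Lemma~\ref{lm:Phidot=}. Setting $A(s):=\int_0^s\Lambda_1$, one has $\frac{d}{ds}\bigl(\Phi^*(s)e^{-A(s)}\bigr)=\Lambda_2(s)e^{-A(s)}$; integrating over $[s^*,t]$ and using $\Phi^*(s^*)=0$ yields $\Phi^*(t)e^{-A(t)}=\int_{s^*}^t\Lambda_2(s)e^{-A(s)}\,ds<0$ (the integrand is continuous and strictly negative on the nondegenerate interval $[s^*,t]$), hence $\Phi^*(t)<0$, contradicting $\Phi^*(t)\ge 0$. Therefore $S=\emptyset$, that is, $\Phi^*(s)>0$ for all $s<t$. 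For the second implication one repeats the scheme verbatim with $S:=\{s\in[0,t):\Phi^*(s)\ge 0\}$, with $u^*\equiv u_{\min}$ on $(s^*,t)$, with the ordering $Y(u_{\min})<Y(\bar u)$ from Lemma~\ref{lm:orderYpi}, and with all the inequalities reversed.

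The main obstacle, and the step needing the most care, is the middle paragraph: one must rule out that, as $s$ decreases from $t$ through the region where $\Phi^*>0$, the projected state $y^*$ climbs up to or beyond $Y(\bar u)$. This is exactly where the strict ordering $Y(\bar u)<Y(u_{\max})$ of Lemma~\ref{lm:orderYpi}, the sign of the scalar field $\mathcal Y(\cdot,u_{\max})$ from Lemma~\ref{lm:signdotyq}, and the fact that the equilibrium $Y(u_{\max})$ cannot be crossed all have to be combined; everything else is bookkeeping with the first-order linear equation $\dot\Phi=\Lambda_1\Phi+\Lambda_2$.
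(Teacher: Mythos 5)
Your argument is correct and follows essentially the same route as the paper's proof: you locate the last zero $s^*$ of $\Phi^*$ before $t$, use \eqref{uaf} to force $u^*\equiv u_{\max}$ where $\Phi^*>0$, combine Lemma~\ref{lm:signdotyq} with the ordering $Y(\bar u)<Y(u_{\max})$ of Lemma~\ref{lm:orderYpi} to keep $y^*<Y(\bar u)$ on $[s^*,t]$, and conclude from $\dot\Phi^*=\Lambda_1\Phi^*+\Lambda_2$ with $\Lambda_2<0$. The only differences are cosmetic: you integrate with an explicit integrating factor where the paper invokes Gr\"onwall, and you absorb the boundary case $\Phi^*(t)=0$ into the $\sup$ construction rather than treating it as a preliminary reduction.
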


\begin{proof}
Assume that $\Phi^*(t)\geq0$ and $y^*(t)<Y(\bar u).$
If $\Phi^*(t)=0, $ then Lemma~\ref{lm:Phidot=} gives $\dot\Phi^*(t)<0$ so there exists $t'<t$ such that $\Phi^*>0$ on $(t',t).$
As a consequence $u^*\equiv u_{\max}$ on $(t',t)$ which ensures by using Lemma~\ref{lm:signdotyq} and Lemma~\ref{lm:orderYpi} that $y^*(t')<Y(\bar u).$
So we can restrict to the case $\Phi^*(t)>0,\ y^*(t)<Y(\bar u)$ and we suppose by contradiction that
\beq\label{contradiction}\exists t_0<t,\ \Phi^*(t_0)=0\ \text{and}\ \Phi^*(s)>0,\ \forall s\in(t_0,t).\eeq
Using Lemma~\ref{lm:signdotyq} and Lemma~\ref{lm:orderYpi}, we obtain that $y^*<Y(\bar u)$ on $[t_0,t].$
By Lemma~\ref{lm:Phidot=} we get that $\Lambda_2<0$ and then $\dot\Phi^*<\Lambda_1\Phi^*$ on $[t_0,t].$
By Gr\"onwall's inequality we deduce that $\Phi^*(t_0)>\Phi^*(t)\,e^{-\int_{t_0}^t\Lambda_1(s)\,ds}>0.$
This contradicts~\eqref{contradiction} so the first implication of Lemma~\ref{co:signPhi} is proved.
The proof of the second implication follows from a similar argument.
\end{proof}

\

In the proofs of Theorem~\ref{th:part_cont} and Theorem~\ref{th:uniqueness}, we will use the following compact definitions: a triplet $(y,u,y^0)$ (resp. $(q,u,q^0)$) is said to be solution to~\eqref{doty=} (resp. to~\eqref{dotq=}) if $y$ satisfies Equation~\eqref{doty=} (resp. Equation~\eqref{dotq=}) with the control $u$ and the initial condition $y(0)=y^0$ (resp. $q(0)=q^0).$

\begin{proof}[Proof of Theorem~\ref{th:part_cont}]
We first define $T_0.$
From Corollary~\ref{co:asymptotic} and Lemma~\ref{lm:orderYpi}, we know that
\[\forall y^0\in[0,Y(\bar u)),\ \exists!\, T_0(y^0)>0,\quad  (y,u_\text{max},y^0)\ \text{is solution to \eqref{doty=}}\implies y(T_0(y^0))=Y(\bar u),\]
and
\[\forall y^0\in(Y(\bar u),1],\ \exists!\, T_0(y^0)>0,\quad  (y,u_\text{min},y^0)\ \text{is solution to \eqref{doty=}}\implies y(T_0(y^0))=Y(\bar u).\]
The function $T_0$ thus defined is bounded on $[0,1].$
This is a consequence of the Cauchy-Lipschitz theorem for Equation~\eqref{doty=}, which ensures that for all $y\in[0,1],$ $T_0(y)\leq\max\{T_0(0),T_0(1)\}.$

The time $T_\psi$ is also defined by using Corollary~\ref{co:asymptotic} and Lemma~\ref{lm:orderYpi}
\beq\label{def:Tpsi}\exists!\, T_\psi>0,\quad  (q,u_\text{min},1/3)\ \text{is solution to \eqref{dotq=}}\implies q(-T_\psi)=\pi(\bar u).\eeq

Once $T_0$ and $T_\psi$ are defined, it only remains to verify that the control defined by \eqref{eq:upart} and the associated trajectories $y^*$ and $q^*$ satisfy the maximality condition~\eqref{eq:maximality2}.
 For this, it suffices to check that property \eqref{uaf} of the switching function $\Phi^*(t)=\Phi(x^*(t),p^*(t))$ holds (see Figure~\ref{fig:relax} for a numerical illustration).

In the interval $[T_0(y(0)),T-T_\psi]$, we have $u^*=\bar u$, $y^*\equiv Y(\bar u)$ and $q^*\equiv \pi(\bar u)$.
Thus, by Theorem~\ref{th:singular} and Theorem~\ref{th:perrondim2}, we have $\Phi^*=0$.

We have from the definition of $T_0$ that, if $y(0)>Y(\bar u),$ then $u^*\equiv u_{\min}$ and $y^*>Y(\bar u)$ in the interval $[0,T_0(y(0)))$.
Using Lemma~\ref{lm:Phidot=} we deduce from $y^*>Y(\bar u)$ that $\Lambda_2>0$ and then, since $\Phi(T_0(y(0)))=0$, that $\Phi^*<0$ in the interval $[0,T_0(y(0)))$.
The same argument proves that, if $y(0)<Y(\bar u)$, then $u^*\equiv u_{\max}$ and $\Phi^*>0$ in the interval $[0,T_0(y(0)))$.

In the interval $(T-T_\psi,T]$ we have $u^*(t)\equiv u_{\min},$ so we get from $y^*(T-T_\psi)=Y(\bar u),$ Corollary~\ref{co:asymptotic} and Lemma~\ref{lm:orderYpi} that $y^*(t)<Y(\bar u)$.
It follows from Lemma~\ref{lm:Phidot=} that for every $t\in(T-T_\psi,T],$ $\Lambda_2(t)<0$, and then $\Phi^*(t)<0$ since $\Phi^*(T-T_\psi)=0$.

\end{proof}

\

\begin{proof}[Proof of Theorem~\ref{th:uniqueness}]
Let $t\in [0,T]\mapsto u^*(t)$ be a control such that the maximality condition~\eqref{eq:maximality2} is satisfied. Let $x^*$, $p^*$, $y^*$, $q^*$ and $\Phi^*$ be the corresponding functions.
Since $\Phi^*(T)=\theta\tau x_1^*(T)<0$, necessarily, $u^*= u_{\min}$ in a neighborhood of $T$.
We start from time $T$ and analyze the problem backward in time.

\

{\it First step:}
we prove by contradiction that
\[\exists\eta,T_1>0\ \text{(both independent of $x^0$) s.t.}\quad T>T_1 \implies y^*(T)\not\in[Y(u_{\min})-\eta,Y(u_{\min})+\eta].\]
We start from the fact that
\begin{gather}
\label{phium}
\phi(u_{\min})(F+\theta G)X(u_{\min})=\lb_P'(u_{\min})>0.
\end{gather}
(The first equality of \eqref{phium} can be obtained by differentiating $(u(F+\theta G)+\zeta G) X(u)=\lambda_P(u)X(u)$ with respect to $u$ and by using $\phi(u)(u(F+\theta G)+\zeta G)=\lambda_P(u)\phi(u)$.) Property \eqref{phium} ensures that there exists $\varepsilon>0$ such that
\begin{gather}
\label{implimin}
\max\{|y-Y(u_{\min})|,|q-\pi(u_{\min})|\}\leq\varepsilon\quad\Longrightarrow\quad \Phi(x,p)>0.
\end{gather}
Using Lemma~\ref{lm:orderYpi} and decreasing $\varepsilon>0$ if necessary, we may assume that $\pi(u_{\min})-\varepsilon > 1/3$.
Then from Corollary~\ref{co:asymptotic} and Lemma~\ref{lm:orderYpi} we have that
\[\exists!\, T_{1,1}>0,\quad  (q,u_\text{min},1/3)\ \text{is solution to \eqref{dotq=}}\implies q(-T_{1,1})=\pi(u_{\min})-\varepsilon.\]
Consider now $\eta>0$ such that, for $(y,u_{\min},y^0)$ solution to \eqref{doty=}, \beq\label{def:eta}y^0\in[Y(u_{\min})-\eta, Y(u_{\min})+\eta]\implies\forall t\in[-T_{1,1},0],\ y(t)\in[Y(u_{\min})-\varepsilon,Y(u_{\min})+\varepsilon].\eeq
(This $\eta >0$ exists since $(y,u):=(Y(u_{\min}),u_{\min})$ is a solution of \eqref{doty=}.)
Still decreasing $\varepsilon>0$ if necessary and using Lemma~\ref{lm:orderYpi}, we may assume that
\begin{gather}
\label{Yuminpetit}
Y(u_{\min})+\varepsilon< Y(\bar u).
\end{gather}
By Corollary~\ref{co:defT}, Lemma~\ref{lm:orderYpi} and \eqref{Yuminpetit}
\[\exists!\, T_{1,2}>0,\quad  (y,u_{\max},Y(u_{\min})+\varepsilon)\ \text{is solution to \eqref{doty=}}\implies y(-T_{1,2})=0.\]
Let $T_1:=T_{1,1}+T_{1,2}$ and $T>T_1$.
Let us assume that $y^*(T)$ belongs to $[Y(u_{\min})-\eta, Y(u_{\min})+\eta]$.
If $\Phi^*<0$ in $[T-T_{1,1},T]$, then $u^*\equiv u_{\min}$ in $[T-T_{1,1},T]$, and, by the definition~\eqref{def:eta} of $\eta$, $y^*(T-T_{1,1})\in [Y(u_{\min})-\varepsilon, Y(u_{\min})+\varepsilon]$, which, together with \eqref{implimin} gives $\Phi^*(T-T_{1,1})>0$.
Hence $\Phi^*<0$ on $[T-T_{1,1},T]$ does not hold and there exists
$t_0\in [T-T_{1,1},T]$ such that $\Phi^*(t_0)=0$ and $\Phi^*<0$ in $(t_0,T]$.
From~\eqref{def:eta} we have $y^*(t_0)\leqslant Y(u_{\min})+\varepsilon< Y(\bar u)$, which, with Corollary~\ref{co:signPhi}, gives that $\Phi^*(t)>0$  for every $t\in [0,t_0).$
In particular, $u^*\equiv u_{\max}$ on $[0,t_0]$.
Finally, using the definition of $T_{1,2}>0$, we get that $y^*(T-t_0-T_{1,2})\leqslant 0$, which is not possible since $x^*>0$ in $[0,T]$.
So $y^*(T)$ cannot belong to $[Y(u_{\min})-\eta,Y(u_{\min})+\eta]$.

\

{\it Second step:} we prove that
\[\exists T_2>0,\qquad T>T_2 \implies y^*(T)\not\in[0,Y(u_{\min})-\eta].\]
 By Corollary~\ref{co:defT},
\[\exists!\,T_{2,1}>0,\quad (y,u_{\min},Y(u_{\min})-\eta)\ \text{is solution to \eqref{doty=}}\implies  y(-T_{2,1})=0\]
and, using additionally Lemma~\ref{lm:orderYpi},
\[\exists!\,T_{2,2}>0,\quad (y,u_{\max},Y(u_{\min})-\eta)\ \text{is solution to \eqref{doty=}}\implies  y(-T_{2,2})=0.\]
For $T>T_2:=T_{2,1}+T_{2,2}$, the terminal value $y^*(T)$ cannot belong to $[0,Y(u_{\min})-\eta]$.
If $y^*(T)$ is in $[0,Y(u_{\min})-\eta]$, then either $\Phi^*(t)<0$ on $[T-T_{2,1},T]$ and in this case $y^*(T-T_{2,1})\leq0,$ or there exists $t_0\in[T-T_{2,1},T]$ such that $\Phi^*(t_0)=0$ and in this case we have from Corollary~\ref{co:signPhi} that $\Phi^*(t)>0$ for $t<t_0$, so $u=u_{\max}$ and $y^*(t_0-T_{2,2})\leq0.$
Neither of these two cases is possible since $x^*>0$ on $[0,T]$.

\

{\it Third step:} we prove that
\[\exists T_3>0,\qquad T>T_3\implies y^*(T)\not\in[Y(u_{\min})+\eta,1]\setminus\{Y_\psi\}\]
where $Y_\psi:=y(T_\psi)$ for $(y,u_{\min},Y(\bar u))$ solution to \eqref{doty=}, with $T_\psi$ defined in~\eqref{def:Tpsi}.

We start from the fact that, by Corollary~\ref{co:defT},
\[\exists!\,T_{3,1}>0,\qquad (y,u_{\min},Y(u_{\min})+\eta)\ \text{is  solution of \eqref{doty=}}\implies y(-T_{3,1})=1.\]
 Let us assume that $T>T_{3,1}$. If $y^*(T)\in(Y(u_{\min})+\eta,1]$, then there exists a time $t_0>T-T_{3,1}$ such that $\Phi^*(t_0)=0$ because $y^*(t)$ has to be less than $1$ for every time in $[0,T]$. Increasing $t_0$ if necessary, we may also impose that $\Phi^*<0$ in $(t_0,T]$.
It is not possible to have $y^*(t_0)>Y(\bar u)$ because in this case, by Lemma~\ref{lm:Phidot=}, we would have $\Phi^*(t_0)=0$ with $(\Phi^*)'(t_0)=\Lambda_2(t_0)>0$, which cannot hold since $\Phi^*<0$ in $(t_0,T]$.
So we necessarily have $y^*(t_0)\leq Y(\bar u)$.

If $y^*(t_0)<Y(\bar u)$, then, by Corollary~\ref{co:signPhi}, $\Phi^*$ is positive on $[0,t_0).$
But by Corollary~\ref{co:defT} and Lemma~\ref{lm:orderYpi}
\[\exists!\,T_{3,2}>0,\qquad (y,u_{\max},Y(\bar u))\ \text{is  solution of \eqref{doty=}}\implies y(-T_{3,2})=0.\]
So we get that $y^*(t_0)$ cannot be less than $Y(\bar u)$ if $T>T_3:=T_{3,1}+T_{3,2}$.

Hence, for $T>T_3$, we have $y^*(t_0)=Y(\bar u)$.
We deduce from this identity, together with $\Phi^*(t_0)=0$, that $q^*(t_0)=\pi(\bar u)$ and it implies that $t_0=T-T_\psi$, where $T_\psi$ is defined in the proof of Lemma \ref{th:part_cont}.
As a consequence, the only possible value for $y^*(T)$ is $Y_\psi=y(T_\psi)\in (Y(\bar u),1)$, where $y$ is the solution of \eqref{doty=} with $u\equiv u_{\min}$ such that $y(0)=Y(\bar u)$.

\

Here, we have proved that, for $T>\max\{T_1,T_2,T_3\}$, the only possible control which satisfies the PMP takes the value $u_{\min}$ in $[T-T_\psi,T]$.
Moreover, the associated trajectories satisfy $y^*(T-T_\psi)=Y(\bar u)$ and $q^*(T-T_\psi)=\pi(\bar u)$.
Then, using Lemma~\ref{lm:signdotyq} and Corollary~\ref{co:signPhi} with the same kind of arguments as above, it is straightforward to check that the control  defined by~\eqref{eq:upart} is the only control which satisfies the PMP for $T>T_c:=\max\{T_1,T_2,T_3\}$.
Since any optimal control satisfies the maximality condition~\eqref{eq:maximality2}, we conclude that this is the only optimal control.

\end{proof}

\begin{corollary}
Asymptotically in $T$ we have the convergence
\[\lim_{T\to+\infty}\f{\ln(J(u^*))}{T}=\lb_P(\bar u).\]
\end{corollary}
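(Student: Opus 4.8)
The plan is to combine the explicit optimal control from Theorem~\ref{th:uniqueness} with the structure of the trajectory it produces. For $T>T_c$, the optimal control $u^*$ given by~\eqref{eq:upart} consists of three phases: an approach phase on $[0,T_0(y(0))]$ with a bang control, a central ``turnpike'' phase on $(T_0(y(0)),T-T_\psi]$ with $u^*\equiv\bar u$, and a final phase on $(T-T_\psi,T]$ with $u^*\equiv u_{\min}$. The objective is $J(u^*)=\psi x^*(T)$, so I would control $\ln(\psi x^*(T))$ phase by phase. The key observation is that on the central phase, Theorem~\ref{th:part_cont} tells us that $y^*\equiv Y(\bar u)$, i.e. the direction of $x^*(t)$ is frozen at the Perron eigenvector $\bar X$ of the matrix $\bar u F+(\theta\bar u+\zeta)G$; along this phase the dynamics~\eqref{eq:dynx} reduce to $\dot x^*=\bar\lb_P x^*$ in that fixed direction, so $\psi x^*(t)$ grows exactly like $e^{\bar\lb_P t}$ (up to the fixed factor $\psi\bar X/\|\bar X\|$ absorbed by the normalization). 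Hence over the central phase of length $T-T_\psi-T_0(y(0))$, the log of the objective increases by $\bar\lb_P\bigl(T-T_\psi-T_0(y(0))\bigr)$ plus a bounded term.

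First I would make this precise. Write $J(u^*)=\psi x^*(T)$ and decompose
\[
\ln\bigl(\psi x^*(T)\bigr)=\ln\bigl(\psi x^*(T_0(y(0)))\bigr)+\bigl[\ln\bigl(\psi x^*(T-T_\psi)\bigr)-\ln\bigl(\psi x^*(T_0(y(0)))\bigr)\bigr]+\bigl[\ln\bigl(\psi x^*(T)\bigr)-\ln\bigl(\psi x^*(T-T_\psi)\bigr)\bigr].
\]
For the first bracket: on $[0,T_0(y(0))]$ the control is a fixed constant ($u_{\min}$ or $u_{\max}$), $T_0(y(0))\le\sup_{[0,1]}T_0<+\infty$ is bounded independently of $T$, and $x^0>0$ is fixed; since the flow of a linear ODE over a bounded time interval from a fixed positive initial datum stays in a fixed compact set of positive vectors, $\ln(\psi x^*(T_0(y(0))))$ is bounded uniformly in $T$. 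For the third bracket: on $(T-T_\psi,T]$ the control is $u_{\min}$, the interval has fixed length $T_\psi$, and $x^*(T-T_\psi)$ is collinear to $\bar X$ with a $T$-dependent but explicitly computable amplitude; the ratio $\psi x^*(T)/\psi x^*(T-T_\psi)$ depends only on the flow of~\eqref{eq:dynx} with $u\equiv u_{\min}$ over time $T_\psi$ applied to the fixed direction $\bar X$, hence is a fixed positive constant, so this bracket is bounded uniformly in $T$. For the central bracket: since $x^*(t)$ stays collinear to $\bar X$ on $[T_0(y(0)),T-T_\psi]$ and satisfies~\eqref{eq:dynx} with $u\equiv\bar u$, we have $x^*(T-T_\psi)=x^*(T_0(y(0)))\,e^{\bar\lb_P(T-T_\psi-T_0(y(0)))}$, so the central bracket equals exactly $\bar\lb_P\bigl(T-T_\psi-T_0(y(0))\bigr)$.

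Putting these together, $\ln(J(u^*))=\bar\lb_P\,T+O(1)$ as $T\to+\infty$, where the $O(1)$ term collects the two bounded boundary contributions and the bounded quantities $\bar\lb_P T_\psi$ and $\bar\lb_P T_0(y(0))$ (the latter bounded since $\sup_{[0,1]}T_0<+\infty$). Dividing by $T$ and letting $T\to+\infty$ gives $\lim_{T\to+\infty}\ln(J(u^*))/T=\bar\lb_P=\lb_P(\bar u)$, which is the claim (recall from Theorem~\ref{th:perrondim2} and Corollary~\ref{co:opt_perron} that $\bar\lb_P=\lb_P(\bar u)$ with $\bar u=u_{\mathrm{sing}}$). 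I do not expect any genuine obstacle here: the only mild care needed is to note that $\psi x^*>0$ throughout (so the logarithm is well-defined), which follows from $x^*(t)>0$ on $[0,T]$ and $\psi>0$, and to keep track that all the ``boundary'' contributions are bounded uniformly in $T$ — the central turnpike phase, whose length grows linearly in $T$, is the only one that contributes to the limit, and on it the growth rate is exactly the maximal Perron eigenvalue by construction of the optimal control.
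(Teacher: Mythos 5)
Your argument is correct and is exactly the intended derivation: the paper states this corollary without proof as an immediate consequence of Theorems~\ref{th:part_cont} and~\ref{th:uniqueness}, and your phase-by-phase decomposition (bounded contributions from the two boundary arcs of $T$-independent length, exact growth at rate $\bar\lb_P$ along the turnpike arc where $x^*$ stays collinear to $\bar X$) is precisely how it follows. In fact, since for $T>T_c$ the first arc is entirely independent of $T$ and the last arc applies a fixed linear flow to the fixed direction $\bar X$, your $O(1)$ is an exact constant, so $\ln(J(u^*))=\bar\lb_P\,T+C$ for $T>T_c$.
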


A similar ergodic result is proved in~\cite{CalvezGabriel} in the case of dimension $n=3$ but without proving that the limit is $\lb_P(\bar u)$.

\

\section{Conclusion and perspectives}

We have modeled the PMCA protocol by a system of differential equations with a control term.
The analysis of the optimal control problem, which aims to maximize the efficiency of the PMCA, makes appear that the solution may not be a classical control but a relaxed one.
Such a theoretical optimal control cannot be realized experimentally.
Nevertheless it can be approached by altering incubation and sonication phases.

Our main result provides, in dimension 2, the optimal ratio between the two phases.
It is given by $R_\text{opt}:=\f{\Delta_{\max}}{\Delta_{\min}}=\frac{\bar u-u_{\min}}{u_{\max}-\bar u}$ (see~\eqref{eq:deltat}), where $\bar u\in(u_{\min},u_{\max})$ is the constant which maximizes the Perron eigenvalue in the convex hull of the original control set.
To approach the optimal relaxed control via an alternation of incubation and sonication phases, the switching frequency has to be high.
But the frequency is limited experimentally, for instance due to the warming engendered by the sonication.
To maintain the temperature of the sample at a reasonable level, sufficiently long rest phases  (corresponding to incubation) have to be intercalated between the sonication phases.
Considering such experimental constraints, a close-to-optimal strategy should be to switch as fast as possible between sonication and incubation phases, keeping the optimal ratio $R_\text{opt}$ between the respective durations.

Before practicing this strategy in a real PMCA protocol, the parameters of the model have to be estimated from experiments.
This requires precise measurements of the size distribution of the polymers and inverse problem methods as the one detailed in~\cite{DT} (see also the references therein).
To use our result in dimension 2, the sizes of the polymers have to be divided into two pools and the mean polymerization and fragmentation coefficients of the two pools have to be estimated.
If one wants to improve the accuracy of the method, a higher dimensional model should be used.
But it appears that, even for the case of three compartments, the mathematical analysis is much more delicate than in dimension two.

\

\section*{Appendix}

\appendix

\section{Perron eigenvalue}
\label{sec-Perron-eigenvalue}

For $u>0$ fixed and $r(u)>0,$ denote by $(\lb_P(u),X_P(u),\phi_P(u))$ the Perron eigenelements of the matrix $M(u):=uF+r(u)G$ defined by
\[\left\{\begin{array}{lll}
\lb_P X_P=M(u)X_P,&\quad X_P>0,&\quad\|X_P\|_1=1,\\
\lb_P \phi_P=\phi_PM(u),&\quad \phi_P>0,&\quad\phi_P X_P=1.
\end{array}\right.\]
In the current section we investigate the dependence of these elements on the control parameter $u.$
Such eigenelements also exist for the continuous growth-fragmentation equation~\eqref{eq:prionPDE} (see~\cite{DG} for instance),
and their dependence on parameters is investigated in~\cite{CDG,PG}.

\

The function $r:\R^+\to\R^{+*}$ is assumed to be continuous and bounded.
Theorem~\ref{th:Perron} is an immediate consequence of the following
Lemma~\ref{lm:positivity} and Theorem~\ref{th:expansion}.

\begin{lemma}\label{lm:positivity}The eigenelements $\lb_P$, $X_P$ and $\phi_P$ are continuous functions of $u$ on $\R^+$.
Moreover, we have
$$\lb_P(u)>0\ \text{for}\ u>0\qquad\text{and}\qquad\lim_{u\to0}\lb_P(u)=0.$$
\end{lemma}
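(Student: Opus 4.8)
The plan is to handle the three claims --- continuity of the eigenelements, strict positivity of $\lb_P$ on $(0,+\infty)$, and the limit at $0$ --- in turn, the only delicate point being the endpoint $u=0$, where $M(u)=uF+r(u)G$ ceases to be irreducible.

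First, on $(0,+\infty)$ the matrix $M(u)$ is irreducible with nonnegative off-diagonal entries, so by Perron--Frobenius its dominant eigenvalue $\lb_P(u)$ is simple; a simple eigenvalue and its normalized right and left eigenvectors depend continuously (in fact analytically) on the matrix entries, and $u\mapsto M(u)$ is continuous since $r$ is, so $\lb_P$, $X_P$, $\phi_P$ --- fixed by $\|X_P\|_1=1$ and $\phi_P X_P=1$ --- are continuous there. At $u=0$ one has $M(0)=r(0)G$; since $G$ is lower bidiagonal with diagonal entries $-\tau_1,\dots,-\tau_{n-1},0$ and all $\tau_i>0$, the value $0$ is a \emph{simple} eigenvalue of $M(0)$ strictly dominating the others (the negative numbers $-r(0)\tau_i$). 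Perturbation theory for this simple eigenvalue provides, for small $u>0$, a unique real eigenvalue $\mu(u)$ of $M(u)$ near $0$ with $\mu(0)=0$, the other $n-1$ eigenvalues staying near the $-r(0)\tau_i<0$; hence $\mu(u)=\lb_P(u)$ for $u$ small. So $\lb_P$, and likewise $X_P$ and $\phi_P$, extend continuously to $\R^+$ with $\lb_P(0)=0$, $X_P(0)=(0,\dots,0,1)^T$, $\phi_P(0)=(1,\dots,1)$; in particular $\lim_{u\to0}\lb_P(u)=0$, which is the third claim.

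It remains to prove $\lb_P(u)>0$ for $u>0$, and here I would use the growth-rate interpretation of $\lb_P$ rather than repeat the eigenvector computation of the main text. Let $x$ solve $\dot x=M(u)x$ with $x^0>0$. Since $M(u)$ is Metzler, $x(t)>0$ for all $t\ge0$, and, using $\psi F=0$, $\frac{d}{dt}\,\psi x(t)=r(u)\,\psi G\,x(t)=r(u)\sum_{i<n}\tau_i x_i(t)>0$, so $\psi x(t)$ is strictly increasing and $\psi x(t)\ge\psi x^0>0$. On the other hand, $\lb_P(u)$ being the simple dominant eigenvalue of the irreducible Metzler matrix $M(u)$, one has $e^{-\lb_P(u)t}x(t)\to(\phi_P x^0)X_P>0$ as $t\to+\infty$. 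If $\lb_P(u)<0$ this forces $\psi x(t)\to0$, contradicting $\psi x(t)\ge\psi x^0$; if $\lb_P(u)=0$ it forces $x(t)\to(\phi_P x^0)X_P$, whence $\frac{d}{dt}\psi x(t)\to r(u)\sum_{i<n}\tau_i(\phi_P x^0)(X_P)_i>0$, incompatible with the boundedness of the increasing function $\psi x(t)$. Hence $\lb_P(u)>0$. (One could instead multiply $\lb_P(u)X_P=M(u)X_P$ on the left by $\psi$ and invoke $\psi F=0$, $\psi>0$, $X_P>0$, $\psi GX_P>0$, which is exactly the argument used in the main text for \eqref{lambda>0}.)

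The main obstacle is the behaviour at $u=0$: one must be sure that the dominant branch $u\mapsto\lb_P(u)$ coming, for $u>0$, from Perron--Frobenius does not jump as $u\to0$, i.e.\ that it converges to the eigenvalue $0$ of $r(0)G$ and not to some other value. This is precisely what the simplicity of $0$ as an eigenvalue of $r(0)G$, together with the strict negativity of all its other eigenvalues, guarantees.
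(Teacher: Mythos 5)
Your proof is correct, but it takes a genuinely different route from the paper's on the two quantitative claims. The paper's proof is a one-line algebraic computation built on the row vector $\Theta=(1,\dots,1)$, which satisfies $\Theta G=0$ (the counterpart of $\psi F=0$): multiplying $\lb_P X_P=M(u)X_P$ on the left by $\Theta$ and using $\Theta X_P=\|X_P\|_1=1$ gives $\lb_P(u)=u\,\Theta F X_P$, an identity that yields positivity for $u>0$ \emph{and} the upper bound $\lb_P(u)\leq u\max_j K_j\beta_j$ (with $K_j=\sum_{i<j}\kappa_{ij}$), hence $\lb_P(u)\to 0$, without ever touching the degenerate matrix $r(0)G$. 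You instead prove positivity dynamically, playing the strict monotonicity of $t\mapsto\psi x(t)$ against the asymptotic profile $e^{-\lb_P t}x(t)\to(\phi_P x^0)X_P$; this is sound (for an irreducible Metzler matrix the spectral bound is attained only at the real, simple Perron eigenvalue, so the convergence you invoke does hold), and your parenthetical alternative is exactly the identity \eqref{lambda>0} of the main text — which the paper deliberately avoids reusing here, offering the $\Theta$-computation as the announced ``different proof''. For the limit at $0$ you use analytic perturbation of the simple eigenvalue $0$ of the triangular matrix $r(0)G$, correctly identifying the branch $\mu(u)$ with $\lb_P(u)$ because the remaining eigenvalues stay near the strictly negative $-r(0)\tau_i$; this is valid but heavier than the paper's explicit bound, and it is the only place where your worry about a possible ``jump'' at $u=0$ has to be addressed — the $\Theta$-identity sidesteps it entirely. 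Finally, for continuity of $X_P$ and $\phi_P$ the paper runs a compactness-plus-uniqueness argument on normalized eigenvectors rather than citing perturbation theory; both are fine.
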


\begin{proof}
Since the function $r$ is continuous, the coefficients of the matrix $M(u)$ depend continuously on $u$.
As a consequence, the characteristic polynomial of $M(u)$ varies continuously with $u$.
The first eigenvalue $\lb_P$ is the largest root of this characteristic polynomial and the Perron-Frobenius theorem ensures that the multiplicity of this root is 1.
So $\lb_P$ is a continuous function of $u$.

Let $u\geq0$ and $(u_k)_{k\in\N}$ be a positive sequence which converges to $u$.
Since $\|X_P(u_k)\|_1=1$ there exists a subsequence of $(X_P(u_k))_k$ which converges to a limit $X_\infty$.
By continuity of $\lb_P(u)$ and $M(u)$, this limit satisfies $M(u)X_\infty=\lb_P(u)X_\infty$ and $\|X_\infty\|_1=1$.
By uniqueness of the first eigenvector, we conclude that the whole sequence $(X_P(u_k))_k$ converges to $X_\infty=X_P(u)$ and so $X_P$ is a continuous function of $u$.
Since $(X_P(u_k))_k$ is a positive convergent sequence, it is lower bounded and we deduce from the normalization $\phi_PX_P=1$ that the sequence $(\phi_P(u_k))_k$ is bounded.
The same as for $X_P$, we conclude from the uniqueness of the adjoint eigenvector that $\phi_P$ is a continuous function of $u$.

Define $\Theta:=(1, 1,\cdots, 1)$ and, for $j=2,\cdots,n,$ $K_j:=\sum_{i=1}^{j-1}\kappa_{ij}>0.$
We have: $\Theta X_P=\|X_P\|_1=1$ since $X_P>0,$ $\Theta G=0$ from \eqref{defG}, and $\Theta F=(0,K_2\beta_2,\cdots,K_n\beta_n)^T$ from~\eqref{defF}.
So multiplying the identity $\lb_P X_P=M(u)X_P$ by $\Theta$ we get
$$\lb_P(u)=u\,\Theta FX_P\leq u\,\max_{2\leq j\leq n}K_j\beta_j$$
which ensures that $\lb_P(u)$ is positive for $u$ positive and tends to zero when $u\to0$.

\end{proof}

\begin{theorem}\label{th:expansion}
Under Assumptions~\eqref{as:r} and \eqref{as:tau}, we have the expansions
\begin{equation*}\begin{array}{l}
\dis k<l,\quad\implies\quad\lambda_P(u)=r_0\tau_1+\left[r_0^{k+1}
(\tau_{k+1}-(k+1)\tau_1)\prod_{i=1}^{k}
\frac{\tau_i}{\beta_{i+1}}\right]u^{-k}
+\underset{u\to +\infty}{o}(u^{-k}),\\
\dis k=l,\quad\implies\quad\lambda_P(u)=r_0\tau_1
+\left[r_0^{k+1}(\tau_{k+1}-(k+1)\tau_1)\prod_{i=1}^{k}
\frac{\tau_i}{\beta_{i+1}}+r_k\tau_1\right]u^{-k}+\underset{u\to +\infty}{o}(u^{-k}),
\vspace{.4cm}\\
\dis k>l,\quad\implies\quad\lambda_P(u)=r_0\tau_1+r_l
\tau_1u^{-l}+\underset{u\to +\infty}{o}(u^{-l}).
\end{array}\end{equation*}
\end{theorem}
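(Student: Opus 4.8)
The plan is to reduce the whole statement to the large-$u$ behaviour of the Perron eigenvector. Multiplying the eigenrelation $\lambda_P X_P=(uF+r(u)G)X_P$ on the left by the mass vector $\psi$ and using $\psi F=0$ (see~\eqref{psiF}) gives the scalar identity
\[
\lambda_P(u)\,\psi X_P(u)=r(u)\,\psi G X_P(u),\qquad u>0 .
\]
Here $\psi G=(\tau_1,\dots,\tau_{n-1},0)$ and $\psi X_P\in[1,n]$ because $\|X_P\|_1=1$, so the boundedness of $r$ already shows that $\lambda_P$ stays bounded as $u\to+\infty$. Rewriting the identity,
\[
\lambda_P(u)-r(u)\tau_1=r(u)\,\frac{\psi GX_P-\tau_1\psi X_P}{\psi X_P},\qquad
\psi GX_P-\tau_1\psi X_P=\sum_{j=2}^{n-1}(\tau_j-j\tau_1)(X_P)_j-n\tau_1(X_P)_n ,
\]
so everything comes down to the asymptotics of the components $(X_P(u))_i$.

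The core step is to prove that, after normalizing $(X_P)_1$, the rescaled quantities $v_i(u):=u^{\,i-1}(X_P(u))_i$ satisfy $v_i(u)\to c_i\,r_0^{\,i-1}$ as $u\to+\infty$, where $c_i:=\prod_{m=1}^{i-1}\tau_m/\beta_{m+1}$ (in particular $(X_P)_i\to0$ for $i\ge2$, hence $(X_P)_1\to1$). I would obtain this in two moves. First, an a priori decay estimate $(X_P(u))_i=O(u^{-(i-1)})$: solving the $i$-th line of the system $\lambda_P X_P=(uF+r(u)G)X_P$ for $(X_P)_i$, the diagonal coefficient has size $u\beta_i$ while the right-hand side only involves $(X_P)_{i-1}$ and $u\sum_{j>i}\kappa_{ij}\beta_j(X_P)_j$; a finite number of ``downward'' sweeps $i=n,n-1,\dots,2$, each improving the decay exponent by one, yields the bound. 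Second, rescale: with $D(u)=\operatorname{diag}(1,u,\dots,u^{\,n-1})$ and $v=D(u)X_P$, the a priori bound makes $v(u)$ bounded, the eigenproblem reads $\tfrac1u D(u)(uF+r(u)G)D(u)^{-1}v=\tfrac{\lambda_P}{u}v$, and an entrywise check shows $\tfrac1u D(u)(uF+r(u)G)D(u)^{-1}\to N$, with $N$ lower bidiagonal of diagonal $(0,-\beta_2,\dots,-\beta_n)$ and subdiagonal $(r_0\tau_1,\dots,r_0\tau_{n-1})$, while $\lambda_P/u\to0$. Since $\ker N$ is one dimensional and the first coordinate of any limit point of $v(u)$ equals $\lim(X_P)_1=1\neq0$, the bounded family $v(u)$ converges to the generator of $\ker N$, whose coordinates are exactly $c_i r_0^{\,i-1}$.

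With the asymptotics in hand the conclusion is short. Because $\tau_n=0<(k+1)\tau_1$, Assumption~\eqref{as:tau} forces $k+1\le n-1$, so $(X_P)_{k+1}$ genuinely occurs in the sum above, and $\tau_j=j\tau_1$ for $j\le k$ kills all earlier terms. Using $(X_P)_{k+1}=c_{k+1}r_0^{\,k}u^{-k}+o(u^{-k})$, the bound $(X_P)_j=O(u^{-(j-1)})=O(u^{-(k+1)})$ for $j\ge k+2$, and $\psi X_P\to1$, I get
\[
\lambda_P(u)-r(u)\tau_1=r_0^{\,k+1}\bigl(\tau_{k+1}-(k+1)\tau_1\bigr)\Bigl(\prod_{i=1}^{k}\frac{\tau_i}{\beta_{i+1}}\Bigr)u^{-k}+o(u^{-k}).
\]
Finally, inserting $r(u)\tau_1=r_0\tau_1+r_l\tau_1u^{-l}+o(u^{-l})$ from~\eqref{as:r} and comparing the orders $u^{-k}$ and $u^{-l}$ produces the three announced formulas: for $k<l$ the $r$-contribution is absorbed into $o(u^{-k})$; for $k=l$ the two $u^{-k}$-terms add; for $k>l$ the $\tau$-contribution $O(u^{-k})$ is absorbed into $o(u^{-l})$. (When $r_0=0$ the same argument applies verbatim, the displayed constants then vanishing.)

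The main obstacle is the second move above — the a priori decay $(X_P)_i=O(u^{-(i-1)})$ and the identification of the limit of the rescaled eigenvector; everything else is the elementary algebra of the identity $\lambda_P\psi X_P=r(u)\psi GX_P$. A more computational alternative would be to track the largest root of the characteristic polynomial of $uF+r(u)G$ directly, but isolating the coefficient of $u^{-k}$ that way looks considerably heavier.
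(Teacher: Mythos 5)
Your proposal is correct and follows the same overall strategy as the paper: both reduce the theorem, via the identity $\lambda_P\psi X_P=r(u)\psi GX_P$ (the paper's \eqref{eq:firstphitest}), to the large-$u$ asymptotics $x_i(u)\sim r_0^{i-1}\bigl(\prod_{j=1}^{i-1}\tau_j/\beta_{j+1}\bigr)u^{1-i}$ of the Perron eigenvector --- precisely the paper's Lemma~\ref{lm:expeigenvector} --- and then conclude by comparing the orders $u^{-k}$ and $u^{-l}$. The only genuine divergence is in how that lemma is proved. The paper runs a single induction on $i$: at each step it takes the last $n-i$ rows of $u^iFX_P=u^{i-1}\lambda_PX_P-u^{i-1}r(u)GX_P$ and inverts the triangular lower-right block of $F$, obtaining in one pass both the limit of $u^ix_{i+1}$ and the vanishing of $u^ix_j$ for $j>i+1$. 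You instead split the work into an a priori decay estimate $x_i=O(u^{-(i-1)})$ from finitely many downward sweeps, followed by the conjugation $v=D(u)X_P$ and the identification of every limit point of $v$ with the generator of the one-dimensional kernel of the limiting bidiagonal matrix $N$, pinned down by $v_1\to1$; I checked the entrywise limit of $\tfrac1u D(u)(uF+r(u)G)D(u)^{-1}$ and the kernel computation, and both are as you claim. Your two-step argument is slightly longer but makes transparent why the limiting profile is the geometric cascade $c_ir_0^{i-1}$, whereas the paper's induction gets rates and limits simultaneously; the final bookkeeping (using $k+1\le n-1$, absorbing the terms $j\ge k+2$, and splitting according to $\min(k,l)$) coincides with the paper's computation.
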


This result can be related to Corollary~1 in \cite{CDG} which provides an expansion of the first eigenvalue for the continuous growth-fragmentation model.
The proof of Theorem~\ref{th:expansion} uses the following lemma which gives the asymptotic behavior of the eigenvector $X_P=(x_1,x_2,\cdots,x_n)^T$.

\begin{lemma}\label{lm:expeigenvector}
Assume that $r(u)$ admits a limit $r_0>0$ when $u$ tends to $+\infty$, then
\beq\label{eq:expeigenvector}\forall i\in[1,n],\qquad x_i(u)\underset{u\to +\infty}{\sim}r_0^{i-1}\prod_{j=1}^{i-1}\frac{\tau_j}{\beta_{j+1}}\,u^{1-i}.\eeq
\end{lemma}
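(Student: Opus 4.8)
The plan is to determine the leading-order asymptotics of each component $x_i(u)$ of the normalized Perron eigenvector $X_P(u)$ by exploiting the near-triangular structure of the matrix $M(u)=uF+r(u)G$ as $u\to+\infty$. The eigenvalue equation $M(u)X_P=\lb_P(u)X_P$ reads, component by component,
\[
u\Bigl(\sum_{j=i+1}^n\kappa_{ij}\beta_j x_j-\beta_i x_i\Bigr)+r(u)\bigl(\tau_{i-1}x_{i-1}-\tau_i x_i\bigr)=\lb_P(u)x_i,\qquad 1\le i\le n,
\]
with $\beta_1=\tau_0:=0$. Since $\lb_P(u)\to r_0\tau_1$ (a bounded limit, by Lemma~\ref{lm:positivity} combined with Assumption~\eqref{as:r} giving $r\to r_0$, and the same expansion machinery used in Theorem~\ref{th:expansion}), while $r(u)\to r_0$, the dominant balance in the $i$-th equation for $i\ge2$ is the term proportional to $u$. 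Dividing by $u$ and letting $u\to+\infty$ forces the coupling relation at leading order; the key observation is that for $i\ge 2$, the $O(u)$ terms must cancel, which ties $x_i$ to the other components.

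The heart of the argument is an induction on $i$ that simultaneously establishes two facts: first, that $x_i(u)\to0$ as $u\to+\infty$ for $i\ge2$ while $x_1(u)\to1$ (so the mass of the normalized eigenvector concentrates on the smallest compartment); and second, the precise rate $x_i(u)\sim r_0^{i-1}\bigl(\prod_{j=1}^{i-1}\tfrac{\tau_j}{\beta_{j+1}}\bigr)u^{1-i}$. For the base case, one uses $\|X_P\|_1=1$ together with the vanishing of the higher components to get $x_1\to1$. For the inductive step from $i-1$ to $i$: I isolate $x_i$ in the $(i-1)$-th equation (which contains $x_i$ through the term $u\,\kappa_{i-1,i}\beta_i x_i$? — more carefully, the $(i-1)$-th row couples $x_{i-1}$ to all $x_j$ with $j\ge i$, so one should instead work with the $i$-th equation itself or better re-sum). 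The cleanest route: rewrite the system using partial sums $S_i:=\sum_{j\ge i} x_j$ or use that $\Theta F$ has a simple form (as in the proof of Lemma~\ref{lm:positivity}); then the $i$-th equation, after multiplying by $u^{-1}$ and using that $x_j=o(x_i)$ for $j>i$ by induction and that the fragmentation gain terms from above are lower order, yields
\[
-\beta_i x_i+\underset{u\to+\infty}{o}(x_i)=-\frac{r(u)}{u}\tau_{i-1}x_{i-1}+\frac{r(u)}{u}\tau_i x_i+\frac{\lb_P(u)}{u}x_i,
\]
so $\beta_i x_i\sim \tfrac{r_0}{u}\tau_{i-1}x_{i-1}$, giving $x_i\sim\tfrac{r_0\tau_{i-1}}{\beta_i}\,u^{-1}x_{i-1}$; combined with the induction hypothesis $x_{i-1}\sim r_0^{i-2}\prod_{j=1}^{i-2}\tfrac{\tau_j}{\beta_{j+1}}u^{2-i}$ this telescopes to the claimed formula.

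The main obstacle I anticipate is making rigorous the claim that the fragmentation gain terms $u\sum_{j>i}\kappa_{ij}\beta_j x_j$ in the $i$-th equation are genuinely of lower order than $\beta_i x_i\sim u^{-1}x_{i-1}$: a priori each such term is $u\cdot x_j$, and one needs the already-established (by induction on the \emph{lower} index, run to completion first for all indices via a preliminary qualitative step) decay $x_j=O(u^{1-j})$ for $j>i$ to conclude $u\,x_j=O(u^{2-j})=o(u^{1-i})$ since $j\ge i+1$. This forces a two-pass structure: first a qualitative induction establishing $x_i(u)=O(u^{1-i})$ and $x_i(u)\to 0$ for $i\ge2$ (using only crude bounds and the normalization), and then a second, quantitative induction pinning down the constant. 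One must also verify that $X_P(u)$ stays in a compact set and that the limiting relations are not destroyed by the $o(\cdot)$ terms in Assumption~\eqref{as:r}; continuity of the eigenelements from Lemma~\ref{lm:positivity} is what guarantees we may extract convergent subsequences and identify limits unambiguously.
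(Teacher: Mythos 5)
Your proposal is correct and follows essentially the same route as the paper: an induction on the compartment index $i$, with the base case $X_P(u)\to(1,0,\dots,0)^T$ obtained from compactness of the normalized eigenvector and the limit relation $FX_P^\infty=0$, and the inductive step obtained from the dominant balance $u\beta_i x_i\sim r(u)\tau_{i-1}x_{i-1}$ in the $i$-th row of the eigenvalue equation. The only organizational difference is that the paper avoids your separate ``qualitative first pass'' by strengthening the induction hypothesis to include $u^{i-1}x_j\to0$ for all $j>i$, which it propagates by solving the lower-triangular block system $u^iFX_P=u^{i-1}\lb_P X_P-u^{i-1}r(u)GX_P$ (rows $i+1$ to $n$) for $(u^ix_{i+1},\dots,u^ix_n)^T$ at once; this is exactly the control on the fragmentation gain terms you correctly identify as the crux (note only the small slip that for $j=i+1$ one has $u\,x_j=O(u^{1-i})$, which is $o(u^{2-i})$ --- the order of the leading balance --- rather than $o(u^{1-i})$ as written).
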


\begin{proof}[Proof of Lemma~\ref{lm:expeigenvector}]
We prove by induction on $i$ that
\beq\label{eq:IH}\tag{IH}u^{i-1} x_i(u)\xrightarrow[u\to +\infty]{}r_0^{i-1}\prod_{j=1}^{i-1}\frac{\tau_j}{\beta_{j+1}}
\qquad\text{and}\quad u^{i-1} x_j(u)\xrightarrow[u\to +\infty]{}0, \qquad\forall j>i.\eeq
\underline{$i=1:$}
We have by definition
\beq\label{eq:eigeneqr}(r(u)G+u F)X_P(u)=\lambda_P(u) X_P(u)\qquad \text{with}\quad \|X_P(u)\|_1=1.\eeq
We use $\psi=(1,2,\cdots,n)$, which satisfies $\psi F=0$ (see \eqref{psiF}).
Testing~\eqref{eq:eigeneqr} against $\psi$ on the left, we obtain
\beq\label{eq:firstphitest}r(u)\psi G X_P(u)=\lambda_P(u)\psi X_P(u)\eeq
and so $\lambda_P(u)$ is bounded since $\|X_P(u)\|_1=1$ and $r$ is bounded.
Dividing by $u$ in \eqref{eq:eigeneqr}, we get
\beq\label{eq:divbyu}\left(\f{r(u)}{u}G+F\right)X_P(u)=\f{\lambda_P(u)}{u}X_P(u).\eeq
The sequence $X_P(u)$ is bounded and thus convergence occurs when $u\to +\infty$ for a subsequence.
But from~\eqref{eq:divbyu} the limit $X_P^\infty$ must satisfy $FX_P^\infty=0$ so the whole sequence converges to
$$X_P^\infty=\delta:=(1,0\,\cdots,0)^T.$$

\

\noindent\underline{$i\to i+1:$} ($i+1\leq n$) We have
$$u^{i}FX_P(u)=u^{i-1}\lambda_P(u) X_P(u)-u^{i-1}r(u)GX_P(u).$$
We consider the $n-i$ last lines of this matrix identity and find
\begin{align*}
\left(\begin{array}{ccc}
-\beta_{i+1}& & \\
 & &(2\kappa_{kj}\beta_j)\\
 &\ddots& \\
\text{\Large$0$}& & \\
 & &-\beta_n
\end{array}\right)
\left(\begin{array}{c}
u^{i}x_{i+1}(u)\\
\\
\vdots \\
\\
u^{i}x_n(u)
\end{array}\right)
&=
\left(\begin{array}{c}
(\lambda_P(u)+r(u)\tau_{i+1})u^{i-1}x_{i+1}(u)-r(u)\tau_{i}u^{i-1}x_{i}(u) \\
\\
\vdots \\
\\
\lambda_P(u)u^{i-1}x_n(u)-r(u)\tau_{n-1}u^{i-1}x_{n-1}(u)
\end{array}\right)\\
&\xrightarrow[u\to +\infty]{\text{by~\eqref{eq:IH}}}
\left(\begin{array}{c}
\dis-r_0\tau_{i}\cdot r_0^{i-1}\prod_{j=1}^{i-1}\frac{\tau_j}{\beta_{j+1}} \\
\\
\text{\Large$0$} \\
\\
\end{array}\right),
\end{align*}
which concludes the proof of Lemma~\ref{lm:expeigenvector}.
\end{proof}

\begin{proof}[Proof of Theorem~\ref{th:expansion}]
Notice that $k<n$ since $\tau_n=0$ and $\tau_1>0$.
Using~\eqref{eq:firstphitest}, the convergence of $X_P$ to $\delta=(1,0\,\cdots,0)^T$ and the convergence of $r$ to $r_0,$ we obtain that $\lambda_P(u)$ converges when $u\to +\infty$ and that the limit $\lb_P^\infty$ satisfies
\beq\label{eq:order0}r_0\psi G \delta=\lb_P^\infty \psi \delta,\eeq
which gives
\begin{equation}\label{lambdaPinfty=}
\lb_P^\infty=r_0\tau_1.
\end{equation}
Once we have this limit, we need to estimate the difference $\lb_P(u)-\lb_P^\infty$ when $u\to+\infty.$
To do so we make the difference between~\eqref{eq:firstphitest} and \eqref{eq:order0} which gives, by using~\eqref{lambdaPinfty=},
\begin{gather}\label{psiXp=}
(\lambda_P(u)-r_0\tau_1)\psi X_P(u)+r_0\tau_1\psi (X_P(u)-\delta)=r(u)\psi G(X_P(u)-\delta)+(r(u)-r_0)\psi G\delta.
\end{gather}
Now we use the result of Lemma~\ref{lm:expeigenvector} which gives an equivalent to $X_P(u)$ when $u\to+\infty,$ and Assumption~\eqref{as:r} which gives an equivalent to $r(u)-r_0$ when $u\to+\infty,$ to deduce an equivalent to $\lb_P(u)-\lb_P^\infty.$
Denoting $m:=\min{(k,l)}$, we obtain from~\eqref{psiXp=}
\begin{align*}
u^m(\lambda_P(u)-r_0\tau_1)\psi X_P(u)&=u^mr(u)\psi G(X_P(u)-\delta)+u^m(r(u)-r_0)\psi G\delta-u^mr_0\tau_1\psi (X_P(u)-\delta)\\
&=u^mr_0\psi(G-\tau_1Id)(X_P(u)-\delta)+u^m(r(u)-r_0)\psi GX_P(u)\\
&=u^mr_0\sum_{j=1}^{n-1}(\tau_j-j\tau_1)(x_j(u)-\delta_{1,j})+u^m(r(u)-r_0)\psi GX_P(u)\\
&=r_0\sum_{j=k+1}^{n-1}(\tau_j-j\tau_1)u^mx_j(u)+u^m(r(u)-r_0)\psi GX_P(u)\\
&\xrightarrow[u\to +\infty]{\eqref{eq:expeigenvector}}\1_{\{k\leq l\}}r_0^{k+1}(\tau_{k+1}-(k+1)\tau_1)\prod_{i=1}^{k}\frac{\tau_i}{\beta_{i+1}}+\1_{\{k\geq l\}}r_l\tau_1,
\end{align*}
where $\delta_{1,j}=1$ if $j=1$ and $\delta_{1,j}=0$ otherwise.
\end{proof}

\

\section{Floquet eigenvalue}
\label{secFloquet}

For a $T$-periodic control $u(t)$,
the Floquet theorem ensures that there is a Floquet eigenvalue $\lb_F[u(\cdot)]$ and a $T$-periodic function $X_F[u(\cdot)](t)$ solution to
$$\f{d}{dt}X_F(t)=[M(u(t))-\lb_F]X_F(t).$$
The Floquet eigenvalues can sometimes be compared to the Perron eigenvalues \cite{Lepoutre,PG2}.
Here we make periodic variations around the optimal constant control $u_\mathrm{opt}$ to find whether or not periodic controls can provide a better eigenvalue than $\lb_P(u_\mathrm{opt}).$

\

Consider directional perturbations $u(t) = u_\mathrm{opt} + \varepsilon \gamma(t)$, where $\gamma$ is a fixed $T$-periodic function and $\varepsilon$ a varying parameter.
For the sake of clarity, we denote by $\lb_F(\varepsilon) := \lambda_F[u_\mathrm{opt} + \varepsilon \gamma(\cdot)]$ the Floquet eigenvalue associated to $\varepsilon$,
$X_F(\varepsilon;t):=X_F[u_\mathrm{opt}+\varepsilon\gamma(\cdot)](t)$ the eigenfunction and $X_F'(\varepsilon;t):=\partial_\varepsilon X_F(\varepsilon;t)$ its derivative with respect to $\varepsilon$.
We also use the notation
$$\langle f\rangle:=\f1T\int_0^T f(t)\,dt$$
for the time average of any $T$-periodic function $f(t)$.

Now we compute the derivatives of $\lb_F(\varepsilon)$ which correspond to the directional derivatives of the Floquet eigenvalue at the point $u_\mathrm{opt}$.
This kind of differentiation technique is used in~\cite{M2} to prove the results about the optimization of the Perron eigenvalue in the case of the continuous cell division problem.
A formula which involves only the coefficients of the equation and the first eigenvectors is obtained for the first and second derivatives.
Here, the computation of the second derivative requires a basis of eigenvectors, and so cannot be extended to continuous models.
For $M(u_\mathrm{opt})$ diagonalizable, we choose two bases $(X_1,X_2,\cdots,X_n)$ and $(\phi_1,\phi_2,\cdots,\phi_n)$ of direct and adjoint eigenvectors associated to the eigenvalues $\lb_1=\lambda_P(u_\mathrm{opt})=\lb_F(0)\in \R$, $\lb_2\in \mathbb{C},\cdots ,\lb_n \in \mathbb{C}$
such that $\phi_i X_i=1$ and $\phi_i X_j=0$ if $i\neq j.$
Moreover, we choose $X_1$ positive and normalized to have $X_1=X_P(u_\mathrm{opt})=X_F(\varepsilon=0)$.

\begin{proposition}[First order condition]\label{prop:per:firstderiv}
We have
\[ \dfrac{d \lb_F}{d\varepsilon}(0) = \langle\gamma\rangle \dfrac{d\lambda_P}{du} (u_\mathrm{opt}) = 0\, . \]
Hence, $u_\mathrm{opt}$ is a critical point also in the class of periodic control.
\end{proposition}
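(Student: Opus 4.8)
The plan is to obtain a formula for $\f{d\lb_F}{d\varepsilon}(0)$ by differentiating, with respect to $\varepsilon$, the equation defining the Floquet eigenfunction
\[\f{d}{dt}X_F(\varepsilon;t)=\bigl[M(u_\mathrm{opt}+\varepsilon\gamma(t))-\lb_F(\varepsilon)\bigr]X_F(\varepsilon;t),\]
and then evaluating at $\varepsilon=0$. First I would record that at $\varepsilon=0$ the control is the constant $u_\mathrm{opt}$, so this equation forces $\lb_F(0)=\lb_P(u_\mathrm{opt})=:\lb_1$ and, with the normalization $X_F(\varepsilon=0)=X_P(u_\mathrm{opt})=:X_1$ chosen above, $X_F(0;t)\equiv X_1$. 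Writing $M'(u)=F+r'(u)G$ for $\f{d}{du}M(u)$, differentiation in $\varepsilon$ at $\varepsilon=0$ yields
\[\f{d}{dt}X_F'(0;t)=\bigl[M(u_\mathrm{opt})-\lb_1\bigr]X_F'(0;t)+\gamma(t)\,M'(u_\mathrm{opt})X_1-\lb_F'(0)\,X_1.\]

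Next I would test this identity on the left against the left eigenvector $\phi_1=\phi_P(u_\mathrm{opt})$. Because $\phi_1[M(u_\mathrm{opt})-\lb_1]=0$ and $\phi_1X_1=1$, it reduces to the scalar identity
\[\f{d}{dt}\bigl(\phi_1X_F'(0;t)\bigr)=\gamma(t)\,\phi_1M'(u_\mathrm{opt})X_1-\lb_F'(0).\]
Integrating over one period $[0,T]$ and using that $X_F'(0;\cdot)$ is $T$-periodic (being the $\varepsilon$-derivative of the $T$-periodic family $X_F(\varepsilon;\cdot)$), the left-hand side integrates to zero, so
\[\lb_F'(0)=\langle\gamma\rangle\,\phi_1M'(u_\mathrm{opt})X_1=\langle\gamma\rangle\,\phi_P(u_\mathrm{opt})\bigl(F+r'(u_\mathrm{opt})G\bigr)X_P(u_\mathrm{opt}).\]
It then remains to identify the last factor with $\f{d\lambda_P}{du}(u_\mathrm{opt})$: differentiating $\lambda_P(u)=\phi_P(u)M(u)X_P(u)$ and using $\phi_P(u)X_P(u)=1$, the $\f{dX_P}{du}$ and $\f{d\phi_P}{du}$ terms cancel exactly as in the proof of Proposition~\ref{prop:exsolPerron} (compare \eqref{lambdau=}--\eqref{lambdav=}), which gives $\f{d\lambda_P}{du}(u)=\phi_P(u)M'(u)X_P(u)$. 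Hence $\f{d\lb_F}{d\varepsilon}(0)=\langle\gamma\rangle\,\f{d\lambda_P}{du}(u_\mathrm{opt})$, and since $u_\mathrm{opt}$ is an interior global maximum of $\lb_P$ on $\R^+$ (Theorem~\ref{th:Perron}), $\f{d\lambda_P}{du}(u_\mathrm{opt})=0$, which finishes the argument.

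The part that needs care --- indeed the only real obstacle --- is the smoothness needed to legitimize the $\varepsilon$-differentiation, namely that $\varepsilon\mapsto(\lb_F(\varepsilon),X_F(\varepsilon;\cdot))$ is differentiable near $\varepsilon=0$ and that its $\varepsilon$-derivative is $T$-periodic. I would handle this by analytic perturbation theory: $\lb_F(\varepsilon)$ is read off from the dominant --- hence, by the Floquet--Perron--Frobenius theory invoked above, algebraically simple --- eigenvalue of the monodromy matrix of $\dot X=M(u_\mathrm{opt}+\varepsilon\gamma(t))X$, and this matrix depends analytically on $\varepsilon$; a simple eigenvalue and a normalized eigenvector of an analytic family are themselves analytic in the parameter, the normalization $X_F(\varepsilon=0)=X_P(u_\mathrm{opt})$ picking out a smooth branch, and $T$-periodicity of $X_F(\varepsilon;\cdot)$ for each $\varepsilon$ transfers to $\p_\varepsilon X_F(\varepsilon;\cdot)$. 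The same simplicity of the Perron eigenvalue, together with $r\in\mathcal C^2$ near $u_\mathrm{opt}$, justifies the differentiation of $\lambda_P(u)$ used in the last step.
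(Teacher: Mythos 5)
Your argument is correct and is essentially the paper's own proof: differentiate the Floquet eigenvalue equation in $\varepsilon$, test against the left Perron eigenvector $\phi_1$, integrate over a period using periodicity of $X_F'(0;\cdot)$, and identify $\phi_1 M'(u_\mathrm{opt})X_1$ with $\lambda_P'(u_\mathrm{opt})=0$. The only difference is that you additionally justify the $\varepsilon$-differentiability via analytic perturbation of the simple dominant eigenvalue of the monodromy matrix, a point the paper leaves implicit.
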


As in~\cite{Bacaer2,Bacaer1}, the first derivative of the Floquet eigenvalue is zero and we need to go to the following order.

\begin{proposition}[Second order condition]\label{prop:per:secondderiv}
If $M(u_\mathrm{opt})$ is diagonalizable, we have
$$\dfrac{d^2 \lb_F}{d\varepsilon^2}(0) = \langle\gamma^2\rangle\phi_1M''(u_\mathrm{opt})X_1+2\sum_{i=2}^{n}\langle\gamma_i^2\rangle(\lambda_1-\lambda_i)(\phi_1 M'(u_\mathrm{opt}) X_i)(\phi_i M'(u_\mathrm{opt}) X_1),$$
where $\gamma_i(t):=\phi_i X_F'(0;t)(\phi_i M'(u_\mathrm{opt})X_1)^{-1}$ is the unique $T$-periodic solution to the ODE
\beq\label{eq:gammai}\dot{\gamma_i}(t)  + \lambda_1 \gamma_i(t) =  \gamma(t) + \lambda_i \gamma_i (t).\eeq
\end{proposition}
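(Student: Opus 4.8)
The plan is to differentiate the Floquet eigenfunction equation
$$\frac{d}{dt}X_F(\varepsilon;t)=\bigl[M(u_\mathrm{opt}+\varepsilon\gamma(t))-\lambda_F(\varepsilon)\bigr]X_F(\varepsilon;t)$$
twice with respect to $\varepsilon$ at $\varepsilon=0$ and to read off $\frac{d^2\lambda_F}{d\varepsilon^2}(0)$ by applying the left Perron eigenvector $\phi_1$ on the left and averaging over one period. That $\varepsilon\mapsto(\lambda_F(\varepsilon),X_F(\varepsilon;\cdot))$ is smooth near $0$ follows from analytic perturbation theory (or the implicit function theorem applied to the period map): $\lambda_1:=\lambda_P(u_\mathrm{opt})$ is a simple dominant eigenvalue of $M(u_\mathrm{opt})$, hence $e^{\lambda_F(\varepsilon)T}$ is a simple, isolated Floquet multiplier depending smoothly on $\varepsilon$, and so are $\lambda_F(\varepsilon)$ and the associated $T$-periodic eigenfunction once a normalization is fixed. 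We write $M:=M(u_\mathrm{opt})$, $M':=M'(u_\mathrm{opt})$, $M'':=M''(u_\mathrm{opt})$, $X_1=X_F(0;\cdot)$, and use $\phi_1 M=\lambda_1\phi_1$, $\phi_1 X_1=1$.

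First I would recover the first-order identity (this re-proves Proposition~\ref{prop:per:firstderiv}). The first $\varepsilon$-derivative of the eigenfunction equation at $\varepsilon=0$ reads
$$\frac{d}{dt}X_F'(0;t)=(M-\lambda_1)X_F'(0;t)+\gamma(t)M'X_1-\lambda_F'(0)X_1.$$
Applying $\phi_1$ on the left and averaging over $[0,T]$ (so the time-derivative term has zero mean and the $(M-\lambda_1)$ term vanishes) gives $\lambda_F'(0)=\langle\gamma\rangle\,\phi_1 M'X_1=\langle\gamma\rangle\frac{d\lambda_P}{du}(u_\mathrm{opt})=0$, the last equality because $u_\mathrm{opt}$ maximizes $\lambda_P$. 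Next, expanding $X_F'(0;t)=\sum_{i=1}^n c_i(t)X_i$ in the eigenbasis and projecting the displayed equation onto $\phi_i$ for $i\geq2$ yields $\dot c_i+\lambda_1 c_i=\gamma\,\phi_i M'X_1+\lambda_i c_i$; dividing by $\phi_i M'X_1$ (when nonzero; otherwise $c_i\equiv0$ is the only $T$-periodic solution and the corresponding term below vanishes) shows $\gamma_i:=c_i(\phi_i M'X_1)^{-1}$ solves \eqref{eq:gammai}, and a $T$-periodic solution exists and is unique since $\mathrm{Re}\,\lambda_i<\lambda_1$, hence $e^{(\lambda_i-\lambda_1)T}\neq1$. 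For $i=1$ one has $\phi_1 M'X_1=\frac{d\lambda_P}{du}(u_\mathrm{opt})=0$, so $\dot c_1=0$; by the gauge freedom $X_F\mapsto\mu(\varepsilon)X_F$ we may take $c_1\equiv0$, though in fact $c_1$ never enters what follows because it is multiplied by $\phi_1 M'X_1=0$.

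Then I would differentiate the equation a second time. At $\varepsilon=0$, using $\lambda_F'(0)=0$, one obtains
$$\frac{d}{dt}X_F''(0;t)=(M-\lambda_1)X_F''(0;t)+2\gamma(t)M'X_F'(0;t)+\gamma(t)^2M''X_1-\lambda_F''(0)X_1.$$
Applying $\phi_1$ and averaging kills both $X_F''$ contributions, leaving
$$\lambda_F''(0)=\langle\gamma^2\rangle\,\phi_1 M''X_1+2\bigl\langle\gamma(\cdot)\,\phi_1 M'X_F'(0;\cdot)\bigr\rangle.$$
In the last average, substitute $X_F'(0;t)=\sum_{i=1}^n c_i(t)X_i$: the $i=1$ term drops out since $\phi_1M'X_1=0$, and for $i\geq2$ we use $c_i=\gamma_i\,(\phi_i M'X_1)$ to get $2\sum_{i=2}^n\langle\gamma\gamma_i\rangle(\phi_1 M'X_i)(\phi_i M'X_1)$. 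Finally, from \eqref{eq:gammai} we have $\gamma=\dot\gamma_i+(\lambda_1-\lambda_i)\gamma_i$; multiplying by $\gamma_i$, averaging, and using $\langle\dot\gamma_i\gamma_i\rangle=\frac12\bigl\langle\frac{d}{dt}(\gamma_i^2)\bigr\rangle=0$ by $T$-periodicity gives $\langle\gamma\gamma_i\rangle=(\lambda_1-\lambda_i)\langle\gamma_i^2\rangle$. Substituting yields exactly the claimed formula.

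I expect the main obstacle to be not a single computation but the clean bookkeeping: justifying the smooth dependence of the Floquet data on $\varepsilon$ (which rests on $\lambda_1$ being a simple dominant eigenvalue), choosing the normalization of $X_F(\varepsilon;\cdot)$ so that the $X_F''$-terms genuinely cancel under $\phi_1$ and averaging, and carrying out the Fredholm-type inversion of $M-\lambda_1$ on the complement of $X_1$ — equivalently, the unique solvability of \eqref{eq:gammai} in the class of $T$-periodic functions — which is precisely where $\mathrm{Re}\,\lambda_i<\lambda_1$ and the diagonalizability hypothesis on $M(u_\mathrm{opt})$ are used.
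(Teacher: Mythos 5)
Your proposal is correct and follows essentially the same route as the paper's proof: differentiate the Floquet eigenfunction equation twice in $\varepsilon$, test against $\phi_1$ and average over a period, decompose $X_F'(0;\cdot)$ in the eigenbasis to identify the $\gamma_i$, and convert $\langle\gamma\gamma_i\rangle$ into $(\lambda_1-\lambda_i)\langle\gamma_i^2\rangle$ via \eqref{eq:gammai} and periodicity. Your added care about the degenerate case $\phi_i M'X_1=0$, the normalization of $X_F$, and the unique solvability of the periodic ODE only tightens points the paper leaves implicit.
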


\begin{remark}
For $\gamma\equiv1$, we obtain the second derivative of the Perron eigenvalue
$$\dfrac{d^2\lambda_1}{du^2}(u_\mathrm{opt})=\phi_1M''(u_\mathrm{opt})X_1+2\sum_{i=2}^{n}\dfrac{(\phi_1 M' X_i)(\phi_i M' X_1)}{\lambda_1-\lambda_i},$$
which is negative since $u_\mathrm{opt}$ is a maximum.
This formula appears in \cite{terrytao}.
There exists a physical interpretation in terms of repulsive/attractive forces among the eigenvalues.
\end{remark}

\begin{proof}[Proof of Proposition~\ref{prop:per:firstderiv}]
First we give an expression of the first derivative for the Perron eigenvalue.
By definition, we have
$$M(u)X_P=\lambda_PX_P,$$
which provides by differentiation
\[\lambda_P'X_P+\lambda_PX_P'=M'(u)X_P+M(u)X_P'.\]
Testing against the adjoint eigenvector $\phi_P$, we obtain
$$\lambda_P'+\lambda_P\phi_PX_P'=\phi_PM'(u)X_P+\phi_PM(u)X_P'.$$
Since
\begin{gather}\label{phiPM=}
\phi_PM(u)=\lambda_P\phi_P,
\end{gather}
we have
$$\lambda_P'=\phi_PM'(u)X_P=\phi_P(r'(u)G+F)X_P.$$

Now, starting from the Floquet eigenvalue problem, we have
$$\partial_t X_F + \lb_F(\varepsilon) X_F = M(u_\mathrm{opt}+ \varepsilon \gamma) X_F\, ,$$
which provides by differentiation with respect to $\varepsilon$ that
\beq\label{eq:firstderiv_periodic} \partial_t X_F' + \lb_F'(\varepsilon) X_F + \lb_F(\varepsilon) X_F' =
\gamma(t) M'(u_\mathrm{opt}+\varepsilon\gamma) X_F  + M(u_\mathrm{opt}+\varepsilon\gamma) X_F'\, . \eeq
We test the preceding equation against $\phi_1$ and we evaluate at $\varepsilon = 0$. We obtain, using \eqref{phiPM=},
$$\partial_t(\phi_1 X_F')+\lb_F'=\gamma\phi_1M'(u_\mathrm{opt})X_1,$$
and, after integration in time,
$$\lb_F'=\left(\frac1T\int_0^T \gamma(t)\,dt\right)\phi_1M'(u_\mathrm{opt})X_1=\left(\frac1T\int_0^T \gamma(t)\, dt\right) \dfrac{d\lambda_P}{du} (u_\mathrm{opt}) = 0.$$
It proves the first order condition.
\end{proof}

\begin{proof}[Proof of Proposition~\ref{prop:per:secondderiv}]
We test \eqref{eq:firstderiv_periodic} against another adjoint eigenvector $\phi_i$ and we evaluate at $\varepsilon = 0$.
Using Proposition~\ref{prop:per:firstderiv} and denoting $\gamma_i(t) := \phi_i X_F'(t)(\phi_i M'(u_\mathrm{opt})X_1)^{-1}$, we obtain
\begin{gather}\label{dotgammi=}
\dot{\gamma_i}(t)  + \lambda_1 \gamma_i(t) =  \gamma(t) + \lb_i \gamma_i (t).
\end{gather}
Next, we differentiate \eqref{eq:firstderiv_periodic} with respect to $\varepsilon$ and we get
\begin{align*}
&\partial_t X_F'' + \lb_F''(\varepsilon) X_F + 2\lb_F'(\varepsilon) X_F'  + \lb_F X_F'' \\
=& \gamma^2 M''(u_\mathrm{opt}+\varepsilon \gamma)X_F + 2\gamma M'(u_\mathrm{opt}+\varepsilon\gamma) X_F'  + M(u_\mathrm{opt}+\varepsilon\gamma) X_F''\,.
\end{align*}
We evaluate at $\varepsilon = 0$ and we test against $\phi_1$ to find, using once more Proposition~\ref{prop:per:firstderiv},
\begin{align}
& \partial_t (\phi_1 X_F'') + \lb_F''(0) = \gamma^2 \phi_1M''(u_\mathrm{opt})X_1 + 2\gamma(t) \phi_1 M'(u_\mathrm{opt}) X_F'  .
\label{eq:extremal Floquet}
\end{align}
We decompose the unknown $X_F'$ along the basis $(X_1,\cdots,X_n)$
\[ X_F' = \sum_{i=1}^n \gamma_i(t) (\phi_i M'(u_\mathrm{opt})X_1) X_i  \, .\]
We have in particular
\[ \phi_1 M'(u_\mathrm{opt}) X_F' = \sum_{i=2}^{n}\gamma_i(t) (\phi_i M'(u_\mathrm{opt})X_1)(\phi_1 M'(u_\mathrm{opt}) X_i)\, .   \]
To conclude, we integrate \eqref{eq:extremal Floquet} on $[0,T]$ to obtain
\[\lb_F''(0) = \langle\gamma^2\rangle\phi_1 M''(u_\mathrm{opt})X_1 + 2 \sum_{i=2}^{n}\langle\gamma\gamma_i\rangle (\phi_i M'(u_\mathrm{opt})X_1)( \phi_1 M'(u_\mathrm{opt}) X_i)\]
and use the identity
\[ (\lb_1-\lb_i) \left(\int_0^T \gamma_i^2(t)\,dt \right) =  \int_0^T \gamma(t)\gamma_i(t)\,dt,   \]
which can be checked by multiplying \eqref{dotgammi=} by $\gamma_i$ and by integrating the result on $[0,T]$.
\end{proof}

Now we can prove Theorem~\ref{th:Floquet} stated in Section~\ref{sec:eigenvalue} by using the result of Proposition~\ref{prop:per:secondderiv}.

\begin{proof}[Proof of Theorem~\ref{th:Floquet}]
We consider the periodic function
$$\gamma(t)=\cos(\omega t)$$
and we denote by $\lb_F(\varepsilon,\omega)$ the Floquet eigenvalue corresponding to the periodic control $u_\mathrm{opt}+\varepsilon\cos(\omega t).$
We compute the $2\pi/\omega$-periodic solution $\gamma_i(t)$ to \eqref{eq:gammai}
$$\gamma_i(t)=\f{\lb_1-\lb_i}{\omega^2+(\lb_1-\lb_i)^2}\cos(\omega t)+\f{\omega}{\omega^2+(\lb_1-\lb_i)^2}\sin(\omega t)$$
and then we obtain the formula
$$\lb_F''(0,\omega)=\frac12\phi_1M''(u_\mathrm{opt})X_1+\sum_{i=2}^{n}\frac{\lambda_1-\lambda_i}{\omega^2+(\lambda_1-\lambda_i)^2}(\phi_1 M'(u_\text{opt}) X_i)(\phi_i M'(u_\text{opt}) X_1).$$
But we have
$$\phi_1 M''(u_\mathrm{opt})X_1=r''(u_\mathrm{opt})\phi_1 GX_1$$
and we can compute $\phi_1 GX_1$ by doing the appropriate combination of
$$\lambda_P(u_\mathrm{opt})=\phi_1 (r(u_\mathrm{opt})G+u_\mathrm{opt} F)X_1$$
and
$$\lambda_P'(u_\mathrm{opt})=0=\phi_1 (r'(u_\mathrm{opt})G+F)X_1$$
to obtain
$$\phi_1 M''(u_\mathrm{opt})X_1=\frac{r''(u_\mathrm{opt})}{r(u_\mathrm{opt})-u_\mathrm{opt} r'(u_\mathrm{opt})}\lambda_P(u_\mathrm{opt}).$$
Thus, we have the limit
$$\lim_{\omega\to +\infty}\lb_F''(0,\omega)=\frac12\frac{r''(u_\mathrm{opt})}{r(u_\mathrm{opt})-u_\mathrm{opt} r'(u_\mathrm{opt})}\lambda_P(u_\mathrm{opt}).$$
\end{proof}

\

\section{Alternative proof of Theorem~\ref{th:perrondim2}}\label{sec:altproof}

\begin{proof}[Second proof of Theorem~\ref{th:perrondim2}]

The characteristic polynomial of the matrix $uF+(\theta u+\zeta)G$ is
$$X^2+\bigl((\theta u+\zeta)\tau+u\beta\bigr)X-u(\theta u+\zeta)\beta\tau.$$
The discriminant of this polynomial is
\begin{align}\label{discriminant}
\Delta & = (\theta u+\zeta)^2\tau^2+u^2\beta^2+6u(\theta u+\zeta)\beta\tau \nonumber\\
& = (\theta^2\tau^2+\beta^2+6\theta\beta\tau)u^2+2\zeta\tau(\theta\tau+3\beta)u+\zeta^2\tau^2.
\end{align}
Since $0< u<\displaystyle\frac{\zeta}{-\theta}$, we have $\Delta>0$.
Define new relevant parameters
$$A := \theta\tau+\beta,\qquad B := \sqrt{-2\theta\beta\tau},\qquad C := \theta\tau+3\beta \qquad\text{and}\qquad D := \zeta\tau.$$
With these notations, discriminant \eqref{discriminant} writes
$$\Delta = (A^2-2B^2)u^2+2CD u+D^2,$$
and the first eigenvalue of $uF+(u\theta+\zeta)G$ is
$$\lambda_P(u) = \frac12\left(-Au-D+\sqrt{\Delta}\right)>0.$$
Differentiating twice this expression we get
\[\lb_P''(u)=\f{2\Delta\Delta''-(\Delta')^2}{4\Delta}\]
and the following lemma ensures, together with $\Delta>0,$ that this second derivative is negative.
\begin{lemma}\label{lm:lbsecond}For all $u$ we have
\[2\Delta\Delta''-(\Delta')^2<0.\]
\end{lemma}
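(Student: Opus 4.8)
The plan is to exploit the fact that $\Delta$ is a \emph{quadratic} polynomial in $u$, so that the combination $2\Delta\Delta''-(\Delta')^2$ is in fact a constant, and then to evaluate that constant explicitly using the parameters $A,B,C,D$ introduced above.

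First I would write $\Delta(u)=au^2+bu+c$ with $a:=A^2-2B^2$, $b:=2CD$, $c:=D^2$. Then $\Delta'(u)=2au+b$ and $\Delta''(u)=2a$, and a direct expansion gives
\[
2\Delta\Delta''-(\Delta')^2=4a\bigl(au^2+bu+c\bigr)-\bigl(2au+b\bigr)^2=4ac-b^2,
\]
which is independent of $u$. In other words, this quantity is (up to sign) the discriminant of $\Delta$ viewed as a polynomial in the variable $u$, and the lemma reduces to the single inequality $4ac-b^2<0$.

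It then remains to compute this constant. Substituting the definitions, $4ac-b^2=4D^2\bigl(A^2-2B^2-C^2\bigr)$, and expanding with $A=\theta\tau+\beta$, $C=\theta\tau+3\beta$, $B^2=-2\theta\beta\tau$ gives $A^2-2B^2-C^2=-8\beta^2$. Hence
\[
2\Delta\Delta''-(\Delta')^2=4ac-b^2=4D^2(-8\beta^2)=-32\,\zeta^2\tau^2\beta^2<0,
\]
since $\zeta>0$, $\tau>0$ and $\beta>0$. This is exactly the desired conclusion. There is essentially no serious obstacle: the one thing to notice is that the expression does not depend on $u$, which turns the statement into a one-line algebraic identity followed by the sign check above; the only point requiring care is to track the signs in $B^2=-2\theta\beta\tau$ (recall $\theta<0$) when expanding $A^2-2B^2-C^2$.
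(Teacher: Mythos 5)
Your proposal is correct and follows exactly the same route as the paper's one-line proof: since $\Delta$ is quadratic in $u$, the quantity $2\Delta\Delta''-(\Delta')^2$ is the constant $4ac-b^2=4D^2(A^2-2B^2-C^2)=-32D^2\beta^2<0$. The algebra checks out, so there is nothing to add.
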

\begin{proof}[Proof of Lemma~\ref{lm:lbsecond}.]
We compute $2\Delta\Delta''-(\Delta')^2=4D^2(A^2-2B^2-C^2)=-32D^2\beta^2<0.$
\end{proof}
We obtain that $\lb_P$ is a strictly concave function of $u$ and thereby, since it vanishes at the ends of the interval $(0,\f{\zeta}{-\theta}),$ it admits a unique critical point $\bar u$ which is the maximum.
We conclude noticing that $u_\mathrm{sing}$ is a critical point of $\lb_P.$

\

We can also check the identity $\bar u=u_\mathrm{sing}$ by computation.
The optimal value $\bar u$ satisfies $\lb_P'(\bar u)=0$, with
$$\lb_P'(u)=\frac12\left(\frac{\Delta'}{2\sqrt{\Delta}}-A\right).$$
To obtain $\bar u$, we solve the equation
\begin{equation}\label{eq:squared}\left(\Delta'\right)^2 = 4A^2\Delta,\end{equation}
which writes
$$-2B^2(A^2-2B^2)u^2-4B^2CDu+D^2(C^2-A^2)=0.$$
The discriminant of this binomial is
$${\mathfrak D} = 8\,A^2B^2D^2(C^2+2B^2-A^2) = 64\,A^2B^2D^2\beta^2,$$
and the roots are
$$u^\pm = \frac{D}{B}\,\frac{BC\pm2\beta A}{2B^2-A^2}.$$
A solution to $\lb_P'(u)=0$ is a solution to \eqref{eq:squared} which satisfies $\displaystyle\frac{\Delta'}{A}>0$.
The computations give
$$\frac{\Delta'(u^-)}{A}=4\beta\frac{D}{B}>0\qquad\text{and}\qquad\frac{\Delta'(u^+)}{A}=-4\beta\frac{D}{B}<0,$$
so $\lb_P'(u)=0$ has a unique solution which is
$$\bar u = u^- = \frac{D}{B}\,\frac{BC-2\beta A}{2B^2-A^2}.$$
Finally we write, from \eqref{eq:using},
\begin{align*}
u_{\mathrm{sing}} & = \frac{D}{B}\,\frac{2\beta+B}{C-2A+2B} \\
& = \frac{D}{B}\,\frac{(2\beta+B)(C-2A-2B)}{(C-2A)^2-4B^2} \\
& = \frac{D}{B}\,\frac{2\beta A-BC}{A^2-2B^2} \\
& = u^-=\bar u.
\end{align*}

\end{proof}

\

\section*{Acknowledgments}
The understanding of the PMCA technique and its mathematical modeling would not have been possible without the many discussions with Natacha Lenuzza and Franck Mouthon. Thanks a lot to them for their patience and their illuminating explanations.\\
The results about the eigenvalue problems owe much to Vincent Calvez. The authors are very grateful to him for his useful suggestions and advice.

%
%

\bibliographystyle{plain}
\bibliography{Prion}

\end{document}